\newtheorem{theor}{Theorem}[section]
\newtheorem{lemma}[theor]{Lemma}
\newtheorem{prop}[theor]{Proposition}
\newtheorem*{question*}{Question}
\theoremstyle{definition}
\newtheorem{defn}{Definition}
\theoremstyle{remark}
\newtheorem{remark}[theor]{Remark}
\numberwithin{equation}{section}
\numberwithin{defn}{section}
\newcommand{\R}{\mathbb{R}}        
\renewcommand{\S}{\mathbb{S}}        
\newcommand{\RNp}{\mathbb{R}^{m+1}_{+}}
\newcommand{\X}{\mathbb{X}^{2s}(\RNp)}
\newcommand{\eps}{\varepsilon}
\renewcommand{\d}{\mathrm{d}}
\renewcommand{\a}{1-2s}
\newcommand{\abs}[1]{\left| #1 \right|}
\newcommand{\norm}[1]{\left\| #1 \right\|}
\newcommand{\extp}{\mathchoice{{\textstyle\bigwedge}}%
    {{\bigwedge}}%
    {{\textstyle\wedge}}%
    {{\scriptstyle\wedge}}}
\newcommand{\G}{\mathcal{G}} 
\newcommand{\E}{\mathcal{E}}
\newcommand{\ve}{\varepsilon}
\newcommand{\h}{{\mathbf{h}}}
\title[]{Partial regularity of the heat flow of half-harmonic maps and applications to harmonic maps with free boundary}
\author{Ali Hyder}
\author{Antonio Segatti}
\author{Yannick Sire}
\author{Changyou Wang}
\date{}
\begin{document}

\begin{abstract}
We introduce a heat flow associated to half-harmonic maps, which have been introduced by Da Lio and Rivi\`ere. Those maps exhibit integrability by compensation in one space dimension and are related to harmonic maps with free boundary. We consider a new flow associated to these harmonic maps with free boundary which is actually motivated by a rather unusual heat flow for half-harmonic maps. We construct then weak solutions and prove their partial regularity in space and time via a Ginzburg-Landau approximation. The present paper complements the study initiated by Struwe and Chen-Lin. 
\end{abstract}

\maketitle

\tableofcontents

\section{Introduction}

In \cite{DR,DR2}, Da Lio and Rivi\`ere introduced a new notion of harmonic map by considering critical points of a Gagliardo-type $\dot H^s(\mathbb R^d)$ semi-norm in the conformal case $s=\frac12$ and $d=1$. Those maps have found a geometric application in the works of Fraser and Schoen about extremal metrics of Steklov eigenvalues (see e.g. \cite{fraserSchoen} and references therein). These maps correspond to an {\sl extrinsinc} version of harmonic maps with free boundary as proved by Millot and Sire in \cite{MS}. On the other hand, Moser \cite{moser} introduced an {\sl intrinsic } version of those latter maps, and Roberts \cite{roberts} investigated regularity of generalized versions of those maps, i.e. considering Gagliardo functionals for any $s \in (0,1)$. Whenever the extrinsic version of those maps is concerned, critical points of the functional introduced by Da Lio and Rivi\`ere satisfy the following equation in the distributional sense 
$$
(-\Delta )^\frac12 u \perp T_{u} N
$$
whenever $u: \mathbb S^1 \to N$. As pointed out in \cite{MS}, the harmonic extension of those maps into the unit disk are so-called harmonic maps with free boundary. We now introduce such maps in a general setup: let $(M, g)$ be an $m$-dimensional smooth Riemannian manifold with boundary $\partial M$ and $N$ be another  smooth compact Riemannian manifold without boundary. Suppose $\Sigma$ is a $k$-dimensional submanifold of $N$ without boundary. Any continuous map $u_0: M\to N$ satisfying $u_0(\partial M)\subset \Sigma$ defines a relative homotopy class in maps from $(M, \partial M)$ to $(N, \Sigma)$. A map $u: M\to N$ with $u(\partial M)\subset \Sigma$ is called homotopic to $u_0$ if there exists a continuous homotopy $h:[0, 1]\times M \to N$ satisfying $h([0,1]\times \partial M)\subset \Sigma$, $h(0) = u_0$ and $h(1) = u$. An interesting problem is that whether or not each relative homotopy class of maps has a representation by harmonic maps, which is equivalent to the following problem:
\begin{equation}\label{e:harmonicmapwithfreeboundary}
\begin{cases}
-\Delta u = \Gamma(u)(\nabla u, \nabla u),\\
u(\partial M)\subset \Sigma,\\
\frac{\partial u}{\partial \nu}\perp T_u\Sigma.
  \end{cases}
\end{equation}
Here $\nu$ is the unit normal vector of $M$ along the boundary $\partial M$, $\Delta \equiv \Delta_M$ is the Laplace-Beltrami operator of $(M,g)$, $\Gamma$ is the second fundamental form of $N$ (viewed as a submanifold in $\mathbb{R}^\ell$ via Nash's isometric embedding), $T_pN$ is the tangent space in $\mathbb{R}^\ell$ of $N$ at $p$ and $\perp$ means orthogonal in $\mathbb{R}^\ell$. (\ref{e:harmonicmapwithfreeboundary}) is the Euler-Lagrange equation for critical points of the Dirichlet energy functional
\begin{equation*}
E(u) = \int_{M}|\nabla u|^2\,dv_g
\end{equation*}
defined over the space of maps
\begin{equation*}
H^1_\Sigma(M, N) = \{u\in H^1(M, N): u(x)\subset \Sigma \ {\rm{a.e.}}\ x\in\partial M \}.
\end{equation*}
Here $H^1(M, N)=\big\{u\in H^1(M,\mathbb R^\ell): \ u(x)\in N \ {\rm{a.e.}}\ x\in M\big\}$. Both the existence and partial regularity of energy minimizing harmonic maps in $H^1_\Sigma(M, N)$ have been established (for example, in \cite{BaldesMM1982}, \cite{DuzaarSteffen1989JRAM}, \cite{DuzaarSteffenAA1989}, \cite{GulliverJostJRAM1987}, \cite{HardtLinCPAM1989}). A classical approach to investigate (\ref{e:harmonicmapwithfreeboundary}) is to study the following parabolic problem
\begin{equation}\label{e:harmonicmapflowwithfreeboundary}
\begin{cases}
\partial_t u -\Delta u = \Gamma(u)(\nabla u, \nabla u)&\text{ on }M\times [0, \infty),\\
u(x, t)\in \Sigma &\text{ on } \partial M\times [0,\infty),\\
\frac{\partial u}{\partial \nu}(x, t)\perp T_{u(x, t)}\Sigma &\text{ on}\ \partial M\times [0,\infty)\\
u(\cdot, 0) = u_0 &\text{ on }M.
  \end{cases}
\end{equation}
This is the so-called harmonic map flow with free boundary. (\ref{e:harmonicmapflowwithfreeboundary}) was first studied by Ma \cite{MaLiCMH1991} in the case $m = dim M = 2$, where a global existence and uniqueness result for finite energy weak solutions was obtained under {suitable} geometrical hypotheses on $N$ and $\Sigma$. Global existence for weak solutions of (\ref{e:harmonicmapflowwithfreeboundary}) was established by Struwe in \cite{StruweManMath1991} for $m\ge 3$. 
In \cite{Hamilton1975LNM}, Hamilton considered the case when $\partial N = \Sigma$ is totally geodesic and the sectional curvature $K_N\leq 0$.
He proved the existence of a unique global smooth solution for (\ref{e:harmonicmapflowwithfreeboundary}). When $N$ is an Euclidean space, the first equation in (\ref{e:harmonicmapflowwithfreeboundary}) is the standard heat equation
\begin{equation}
u_t - \Delta u = 0 \text{ on }M \times [0, \infty).
\end{equation}
As pointed out in \cite{chen-lin} and \cite{StruweManMath1991}, estimates near the boundary for (\ref{e:harmonicmapflowwithfreeboundary}) are difficult because of the highly nonlinear boundary conditions. Struwe in \cite{StruweManMath1991} introduced the heat flow for the intrinsic version of harmonic maps with free boundary. In particular, he used a Ginzburg-Landau approximation in the interior, hence keeping the boundary condition highly nonlinear.

 In the present paper we revisit the Struwe approximation argument by considering a natural, though unusual, heat flow associated to the equation derived by Da Lio and Rivi\`ere, that we called {\sl  half-harmonic  maps}.  Wettstein \cite{wettstein1,wettstein2} considered the natural $L^2-$gradient flow of the $\dot H^\frac12$-energy of half-harmonic map defined distributionally by 
 
\begin{equation}\label{eq:fhf1}
\partial_t u +(-\Delta)^\frac12 u \perp T_{u} N \ {\rm{in}}\ \mathbb R \times [0,\infty),
\end{equation}
where $\partial_t  +(-\Delta)^\frac12$ is the so-called Poisson operator whose expression is explicit.  Some weak solutions for this flow have been constructed in \cite{SSW}. Infinite-time blow up has been considered in \cite{SWZ}.

As far as the (partial) regularity of the heat flow of harmonic maps is concerned, a way to construct weak solutions  is to have a suitable monotonicity formula for a Ginzburg-Landau approximation of the system (see the monograph \cite{bookLW} for an up to date  account). At the moment such a  monotonicity formula is {\sl not } available for the latter system \eqref{eq:fhf1}, despite this flow being the natural one analytically. 

Therefore, we replace the previous flow by 
\begin{equation}
\label{eq:fhf}
\begin{cases}
\big(\partial_t  - \Delta \big)^\frac12 u \perp T_u N &\hbox{ in } \R^m \times (0,+\infty),\\
u(x,t) = u_0(x,t)&\hbox{ in } \R^m\times (-\infty, 0].
\end{cases}
\end{equation}

Clearly, these two flows admit the same stationary solutions, which are (weak) half-harmonic maps into $N$. However, it is known (see \cite{banerjeeGaro}) that, suitably formulated, the flow \eqref{eq:fhf} does enjoy a monotonicity formula.  This is due to the existence of a suitable (caloric) extension to the upper-half space (see \cite{NS} and \cite{ST}). As we will see below, though the operator $\big(\partial_t  - \Delta \big)^\frac12$ defined as a Fourier-Laplace multiplier seems unnatural, its caloric extension to the upper half-space is naturally associated to extrinsic harmonic maps with free boundary. Considering a Ginzburg-Landau approximation {\sl at the boundary}, which is more in the spirit of the approach by Da Lio and Rivi\`ere and motivated by the Ginzburg-Landau approximation of extrinsic harmonic maps with free boundary proved in \cite{MS},  we construct weak solutions which are partially regular. 

We will always assume in the following that $(M,g)=(\mathbb R^m, dx^2)$. To keep the technicalities as simple as possible we will present the detailed proof for the case that the target manifold is a sphere, and provide necessary modifications of proof for general target manifolds
$N$ in Appendix B. Let $(\mathbb S^{\ell-1},g_{can})$ be the $(\ell-1)$ dimensional unit sphere in $\R^\ell$ equipped with the standard metric. Given $u_0:\mathbb{R}^m\times(-\infty, 0]\to \mathbb S^{\ell-1}$ with $u_0(\cdot, t)\in \dot{H}^s(\R^m)$ for $t\le 0$,
we introduce the following evolution: for $(X,t)=(x,y,t)\in\mathbb{R}^{m+1}_{+}\times \mathbb{R}$,
\begin{equation}
\label{eq:extended_fhf12}
\begin{cases}
\displaystyle\frac{\partial u_\eps(X,t)}{\partial t} = \Delta_X u_\eps(X,t)
&\hbox{ in } \mathbb{R}_+^{m+1}\times (0, \infty),\\
u_\eps(x,0,t) = u_0(x,t) &\hbox{ in }\R^m \times (-\infty,0],\\
\displaystyle \lim_{y\to 0^+}\frac{\partial u_\eps(X,t)}{\partial y} =-\frac{1}{\eps^2}(1-\vert u_\eps\vert^2) u_\eps &\hbox{ in }\R^m \times (0,+\infty).
\end{cases}
\end{equation}

The following result is our main theorem. 

\begin{theor}\label{main} For any given $u_0\in {\dot{H}}^\frac12(\R^m, N)$, the following statements hold:
\begin{itemize} 
\item [\rm{A)}] There exists a global solution $u\in L^\infty(\R_+, {\dot{H}}^\frac12(\R^m, N))$
of the equation of $\frac12$-harmonic map heat flow:
\begin{equation}\label{half-flow}
\begin{cases}(\partial_t -\Delta)^\frac12 u\perp T_u N & \ {\rm{in}}\ \R^{m}\times (0,\infty),\\
u\big|_{t\le 0}=u_0 & \ {\rm{in}}\ \R^{m}.
\end{cases}
\end{equation}
Furthermore, there exists a closed subset $\Sigma\subset\R^m\times (0,\infty)$, with locally finite $\textcolor{red}{m}$-dimensional 
parabolic Hausdorff measure, such that
$u\in C^\infty(\R^m\times (0,\infty)\setminus\Sigma)$, and
\item [\rm{B)}] there exists $T_0>0$, depending on $\|u_0\|_{\dot{H}^\frac12(\R^m)}$, such that $\Sigma\cap (\R^m\times [T_0,\infty))=\emptyset$ and
$$\|\nabla u(\cdot, t)\|_{L^\infty(\R^m)}\le \frac{C}{\sqrt{t}}, \qquad \forall t\ge T_0.$$
Hence there exists a point $p\in N$ such that $u(\cdot, t)\to p$ in $C^2_{\rm{loc}}(\R^m)$ as $t\to\infty$, and
\item [{\rm{C)}}] for any $0<t<T_0$, $\Sigma_t=\Sigma\cap (\R^m\times\{t\})$ has finite $(m-1)$-dimensional Hausdorff measure.
\end{itemize}
\end{theor}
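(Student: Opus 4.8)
The plan is to realize the flow \eqref{half-flow} as the trace on $\{y=0\}$ of a caloric map $u$ on $\mathbb{R}^{m+1}_+\times(0,\infty)$ obtained as the $\eps\to 0$ limit of solutions $u_\eps$ to the Ginzburg--Landau boundary problem \eqref{eq:extended_fhf12}, and then to run a Struwe-type partial regularity argument based on the monotonicity formula for the extended problem. First I would establish, for each fixed $\eps>0$, global existence, uniqueness and smoothness of $u_\eps$ solving \eqref{eq:extended_fhf12} together with uniform-in-$\eps$ bounds: the basic energy inequality for $E_\eps(u_\eps(\cdot,t)) = \int_{\mathbb{R}^{m+1}_+}|\nabla u_\eps|^2 + \frac{1}{2\eps^2}\int_{\mathbb{R}^m}(1-|u_\eps|^2)^2$ gives $\sup_t E_\eps(u_\eps(t)) \le E_\eps(u_\eps(0)) \le C\|u_0\|_{\dot H^{1/2}}^2$ (using the caloric extension to initialize) and an $L^2$-in-time bound on $\partial_t u_\eps$. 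This yields weak convergence $u_\eps \rightharpoonup u$ with $u(\cdot,\cdot,t)$ a harmonic-map-with-free-boundary–type extension; the penalization forces $|u|=1$ on $\{y=0\}$ in the limit, and the Neumann condition passes (after testing against tangential variations) to $\frac{\partial u}{\partial y}\perp T_u N$, which is exactly \eqref{half-flow} on the boundary via the Dirichlet-to-Neumann characterization of $(\partial_t-\Delta)^{1/2}$ from \cite{ST}. This gives part A) modulo regularity, and $u\in L^\infty(\mathbb{R}_+,\dot H^{1/2})$.

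For the partial regularity in A), the key tool is the parabolic monotonicity formula for the extended Ginzburg--Landau flow (available by \cite{banerjeeGaro}): on parabolic cylinders centered at boundary points, a suitably scaled energy $\Phi_{\eps}(r) = r^{?}\int\!\!\int |\nabla u_\eps|^2 G + (\text{penalization term})$ is almost monotone in $r$, uniformly in $\eps$. Combined with a small-energy $\eps$-regularity lemma — if the scaled energy over a cylinder $Q_r$ is below a threshold $\varepsilon_0$, then $u_\eps$ (and its limit) is smooth with definite estimates on $Q_{r/2}$, proved by a blow-up/Bochner argument adapted to the free-boundary Neumann condition — one defines the singular set $\Sigma$ as the complement of the set of points with small scaled energy at some scale. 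A standard covering/Vitali argument on the defect measure of $|\nabla u_\eps|^2$ then shows $\Sigma$ is closed with locally finite $(m+1)$-dimensional parabolic Hausdorff measure, and $u\in C^\infty$ away from $\Sigma$. For part C), freezing time $t\in(0,T_0)$ one applies the stationary $\eps$-regularity (the frozen-time equation is the extrinsic harmonic-map-with-free-boundary system, whose Ginzburg--Landau approximation and partial regularity are from \cite{MS}): the slice $\Sigma_t$ has codimension at least $2$ in the $(x,y)$ variables, but since it lies on $\{y=0\}$, $\dim_{\mathcal H}\Sigma_t \le m-1$, obtained again by covering with the frozen-time energy density.

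For part B), I would prove a small-energy-implies-eventual-regularity statement: the total energy is monotone nonincreasing in $t$, so for $t$ large the energy density at every point at a fixed scale falls below $\varepsilon_0$; quantitatively, choosing $T_0$ with $E(u(T_0)) - \lim_{t\to\infty}E(u(t))$ small forces all scaled energies small on $\mathbb{R}^m\times[T_0,\infty)$, hence $\Sigma\cap(\mathbb{R}^m\times[T_0,\infty))=\emptyset$ and, by the $\eps$-regularity estimates and parabolic rescaling $u_\lambda(x,t)=u(\lambda x,\lambda^2 t)$, the bound $\|\nabla u(\cdot,t)\|_{L^\infty}\le C t^{-1/2}$. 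Then $\int_{T_0}^\infty\!\int_{\mathbb{R}^m}|\partial_t u|^2 < \infty$ together with this gradient decay forces $u(\cdot,t)$ to converge to a constant map $p\in N$ in $C^2_{\rm loc}$.

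The main obstacle I anticipate is the $\eps$-regularity lemma near the free boundary $\{y=0\}$, uniformly in $\eps$: one must control both the caloric extension in the interior and the highly nonlinear penalized Neumann data simultaneously, and the blow-up limit analysis has to rule out nonconstant tangent maps of the frozen free-boundary problem — this is where the structure from \cite{MS} and the compensation phenomena à la Da Lio--Rivi\`ere enter, and getting estimates that survive the limit $\eps\to 0$ (rather than merely for each fixed $\eps$) is the delicate point. A secondary difficulty is bookkeeping the parabolic scaling of the monotone quantity so that the singular set dimension counts come out as stated (parabolic $(m+1)$ in spacetime, Euclidean $(m-1)$ in frozen-time slices).
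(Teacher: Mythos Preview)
Your overall strategy for part A) matches the paper's: Ginzburg--Landau approximation via the extended caloric problem, global energy bounds, a parabolic monotonicity formula, a uniform-in-$\eps$ small-energy regularity lemma, and a Vitali covering argument for the size of $\Sigma$. You are also right that the hard technical point is the boundary $\eps$-regularity uniformly in $\eps$; the paper handles this not by blow-up/compensation but by a clearing-out lemma (showing $|U_\eps|\ge\tfrac12$ near points of small renormalized energy) followed by a polar decomposition $U_\eps=\rho_\eps\omega_\eps$ and a delicate $C^{1+\alpha}$ estimate for $\rho_\eps$ via comparison with an auxiliary Robin boundary problem.

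For part B), however, your mechanism is not correct. Smallness of the energy drop $E(u(T_0))-\lim_{t\to\infty}E(u(t))$ only controls $\int_{T_0}^\infty\!\int|\partial_t U|^2$; it does not make the \emph{spatial} renormalized energy at each boundary point small, and the total energy need not decay to zero. The paper exploits the scale directly: at $Z_0=(X_0,t_0)$ one takes the \emph{largest} admissible radius $R=\tfrac12\sqrt{t_0}$, on which the backward heat kernel satisfies $\mathcal G_{X_0,t_0}\le Ct_0^{-(m+1)/2}$ for $t\in[0,\tfrac34 t_0]$. Combining this with the uniform bound $\mathscr E_\eps(t)\le E_0$ and integrating in $t$ gives $\mathcal E(U_\eps,Z_0,R)\le Ct_0^{-(m-1)/2}E_0<\eps_0^2$ once $t_0$ is large enough, uniformly in $\eps$; monotonicity then propagates smallness to all smaller scales and the $\eps$-regularity delivers the $C/\sqrt t$ gradient bound.

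For part C), the frozen-time argument has a genuine gap: for fixed $t$, the extension $U(\cdot,t)$ satisfies $\partial_t U=\Delta U$, not $\Delta U=0$, and its trace $u(\cdot,t)$ is \emph{not} a half-harmonic map, so the stationary theory of \cite{MS} does not apply to time-slices. The paper's argument (following Cheng) stays parabolic throughout: one splits $\mathcal G_{X_0,t_0}$ into a local piece on $B^+_{K(\delta)R}(X_0)$ and a tail bounded by $\delta\,\mathcal G_{(X_0,t_0+R^2)}$, and controls the tail via the monotonicity of the second renormalized energy $\mathcal D(U_\eps,(X_0,t_0+R^2),r)$ up to the maximal scale $\sqrt{t_0+R^2}$. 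This yields, for each $X_0\in\Sigma_{t_0}$, a bound $R^{m+1}\le C\int_{t_0-4R^2}^{t_0-R^2}\!\int_{B^+_{K(\delta)R}(X_0)}\bigl(|\nabla U_\eps|^2+\eps^{-2}(1-|u_\eps|^2)^2\bigr)$; since the time interval contributes a factor $R^2$ and the energy on each time-slice is $\le E_0$, a Vitali cover in the $x$-variable alone gives $\sum_i R^{m-1}\le CE_0$, hence $H^{m-1}(\Sigma_{t_0})<\infty$.
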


At the end of this section, we would like to remark that when $\frac12\not=s\in (0,1)$,
while Lemma 3.1 for the energy monotonicity inequality remains true, 
the arguments presented in Lemma 4.3 (for the $\epsilon_0$-regularity) and 
in Proposition 5.1 (for uniform boundary $C^{1,\alpha}$-estimates) do not seem to be valid because of the
degeneracy of coefficient function $y^{1-2s}$ in the extended equation. Thus Theorem 1.1 remains 
open for $s\in (0,1)\setminus\{\frac12\}$.

\section{Existence of weak solutions}
\label{sec:weak_existence}

In this section we prove the existence of a weak solution 
of 
\begin{equation}
\label{eq:fhf_1}
\begin{cases}
\displaystyle\left(\partial_t -\Delta\right)^s u \perp T_u \mathbb S^{\ell-1}\,\,\,\,\hbox{ in } \R^m \times (0,\infty),\\
\\
u(x,t) = u_0(x)\,\,\,\,\hbox{ in }\R^m\times (-\infty,0],
\end{cases}
\end{equation}
for any $s \in (0,1)$, here $u_0\in \dot{H}^s(\R^m, \mathbb S^{\ell-1})$. This equation is a mere generalization of \eqref{eq:fhf}, and thanks to \cite{NS}, \cite{ST} fits well in our framework (see also \cite{AudritoTerracini} for a similar setup and related results). It is important to remark that the case $s=1/2$ and $m=1$ corresponds to a geometric problem since the image by those maps are minimal surfaces with free boundary. See \cite{MS}. We will then consider only the case $s=1/2$ in any dimension in the subsequent sections. 
However, we provide here the existence of weak solutions (but not their partial regularity) for the general system \eqref{eq:fhf} 
for all $0<s<1$ when the initial datum $u_0$ is a function of $x$ only.

Here $\left(\partial_t-\Delta\right)^s u$ is defined by the Poisson representation formula (found independently by Nystr\"om-Sande \cite{NS} and by Stinga-Torrea \cite{ST}): For any $u$ belonging to a suitable class of functions (see \cite{NS,ST})
\begin{equation}\label{poisson}
\left(\partial_t-\Delta\right)^s u(x,t)=\int_0^\infty\int_{\R^m} (u(x,t)-u(x-z,t-\tau))K_s(z,\tau)\,dzd\tau,
\end{equation}
where the kernel $K_s$ is given by
$$
K_s(z,\tau)=\frac{1}{(4\pi)^{\frac{m}2}|\Gamma(-s)|} \frac{e^{-\frac{|z|^2}{4\tau}}}{\tau^{\frac{m}{2}+1+s}},\ \forall z\in\R^{m}, \tau>0,
$$ 
where $\Gamma$ denotes the Gamma function.


As in \cite{chen} and in \cite{chen-struwe}, we relax the constraint $u\in \mathbb S^{\ell-1}$ and  introduce the  Ginzburg-Landau type approximation. 
For any $\eps>0$, we consider the problem ($c_s$ is a normalization constant that will be defined later) 
\begin{equation}
\label{eq:approx_fhf}
\begin{cases}
\displaystyle\left(\partial_t  - \Delta \right)^s u_\eps = \frac{c_s}{\eps^2}(1-\vert u_\eps\vert^2) u_\eps \,\,\,\,\hbox{ in }\R^m \times (0,+\infty),\\
\\
u_\eps(x,t)= u_0(x)\,\,\,\,\hbox{ in }\R^m\times (-\infty,0].
\end{cases}
\end{equation}
Here $\displaystyle c_{s}=\frac{\Gamma(1-s)}{2^{2s-1}\Gamma(s)}$.

The proof of the existence of a solution to the approximate problem \eqref{eq:approx_fhf} and of its convergence to a solution of \eqref{eq:fhf_1} heavily relies on the possibility of reformulating the nonlocal problems \eqref{eq:fhf_1} and \eqref{eq:approx_fhf} as local problems but in an extended variable setting (see \cite{NS} and \cite{ST}).

First we recall the extension method for the nonlocal operator $\left(\partial_t-\Delta \right)^s$, then we prove the existence of a solution of the Ginzburg-Landau approximation \eqref{eq:approx_fhf}. 
Finally, we address the problem of the convergence when $\eps\to 0$.

\subsection{Extension method}
\label{ssec:extension_method}
In this subsection we briefly recall the extension method of \cite{NS} and \cite{ST}.
If $u=u(x,t)$ is a function belonging
\footnote{Note that in the papers \cite{NS} and \cite{ST} it is actually considered a slightly different definition for $D(H^s)$ that prescribes that its elements belong to $L^2(\R^{m+1})$. The reason for considering the ''homogeneous'' version \eqref{eq:Hs} lies in the fact that we have to deal with maps satisfying the constraint $\abs{v}=1$ in the whole $\R^{m+1}$.} to 
\begin{equation}
\label{eq:Hs}
D(H^s):=\left\{v\in \mathcal{S}'(\mathbb{R}^{m+1}): \,\hat{v}\in L^1_{\rm{loc}}(\R^{m+1}), \,\,(\xi,\sigma)\mapsto \left((2\pi\vert\xi\vert)^2 + 2\pi i \sigma\right)^s \hat{v}(\xi,\sigma)\in L^2(\mathbb{R}^{m+1})\right\},
\end{equation}
where $\mathcal{S}'(\mathbb{R}^{m+1})$ is the space of tempered distributions and $\hat{v}$ is the Fourier transform with respect to $(x,t)$, 
then we can consider the degenerate parabolic problem in the extended variables $(X,t):=(x,y,t)\in \mathbb{R}^m\times (0,+\infty)\times \mathbb{R}$:
\begin{equation}
\label{eq:extended}
\begin{cases}
\displaystyle y^{\a}\frac{\partial U(X,t)}{\partial t} = \hbox{div}_{X}\big(y^{\a}\nabla_{X}U(X,t)\big)\,\,\,
\hbox{ in } \mathbb{R}_+^{m+1}\times \R, \\
\\
\displaystyle U(x,0,t) = u(x,t),\,\,\,\,\,\hbox{ in } \R^m\times \R.
\end{cases}
\end{equation}
Given the boundary datum $u$ in the regularity class {$D(H^s)$} above, there exists a smooth solution $U$ of the parabolic problem above. 
Moreover,  there holds (see \cite{NS} and \cite{ST})
\begin{equation}
\label{eq:local_to_global}
-\frac{1}{c_{s}}\lim_{y\to 0^+}y^{\a}\frac{\partial U(X,t)}{\partial y} = \left(\partial_t -\Delta\right)^s u.
\end{equation}
The limit in \eqref{eq:local_to_global} is understood in the $L^2(\mathbb{R}^{m}\times\R)$ sense. 
See also \cite{rueland}.

With this discussion in mind we rewrite the nonlocal and nonlinear system \eqref{eq:fhf_1} as the following local and degenerate parabolic problem with nonlinear boundary conditions in the extended variables $(X,t)\in \R^{m+1}_+\times \R$:
\begin{equation}
\label{eq:extended_fhf}
\begin{cases}
\displaystyle y^{\a}\frac{\partial U(X,t)}{\partial t} = \hbox{div}_{X}\left(y^{\a}\nabla_{X}U(X,t)\right),\,\,\,
&\hbox{ in } \mathbb{R}_+^{m+1}\times \R,\\
\\
U(x,0,t) = u_0(x), \,\,\,\,\,&\hbox{ in }\R^m \times (-\infty,0],\\
\\
\displaystyle\lim_{y\to 0^+}y^{\a}\frac{\partial U(X,t)}{\partial y} \perp T_u \mathbb S^{\ell-1}, \,\,\,\,\,\,&\hbox{ on }\mathbb{R}^m\times (0,+\infty),
\end{cases}
\end{equation}
where the limit in the last condition is understood in the $L^2$ sense. 
We note that the previous system for the case $s=1/2$ arises as the harmonic map flow with a free boundary and has been investigated in \cite{chen-lin}. 

Notice that  our solution $u$ to \eqref{eq:fhf_1} is $\S^{\ell-1}$ valued, and therefore  it is not in $L^2(\R^m)$. Nevertheless, one can interpret distributional solutions of  \eqref{eq:fhf_1} directly through traces of weak solutions of \eqref{eq:extended_fhf},  which are defined below. In particular, in \cite{NS,ST} the domain $D(H^s)$ is designed so that the R.H.S. of \eqref{poisson} makes sense. As previously mentioned, we slightly modify this domain to take into account the constraint. In any case, we always interpret solutions of \eqref{eq:fhf_1} via its extension.

\begin{remark}
We also want to point out that this is the flow of harmonic maps with free boundary from a manifold with edge-singularities into the sphere. Indeed, for $a>-1$, the operator $y^{2-a}\text{div}(y^a \nabla)$ is an edge-operator in the sense of \cite{mazzeo-edge}. Therefore, the flow \eqref{eq:extended_fhf} is the Ginzburg-Landau approximation of the heat flow of harmonic maps of a manifold with edge-singularities into the round sphere. See also \cite{jesse} for related results. We postpone a deeper investigation of such flows on singular manifolds to future work. 
\end{remark}

\begin{remark}
We would like to point out that the approach used in \cite{Audrito} would be an alternative way to build weak solutions for our system too. 
\end{remark}

Now we discuss the weak formulation of \eqref{eq:extended_fhf}.
First of all, we introduce some functional spaces. 
Given an open set $A\subset \R^{m+1}_+$, we introduce the Lebesgue and the Sobolev spaces with weights
\begin{equation}
\label{eq:L2m}
L^2(A; y^{1-2s}\d X):=\left\{V:A\to \R^{\ell}: \int_{A}  \vert  V\vert^2 y^{\a}\d X <+\infty \right\},
\end{equation}
and
\begin{equation}
\label{eq:H1m}
H^1(A;y^{\a}\d X):= \left\{V:A\to \R^{\ell}: V \hbox{ and } \nabla_X V\in L^2(A, y^{1-2s}\d X)  \right\},
\end{equation}
endowed with the norm 
\begin{equation}
\label{eq:normH1m}
\norm{V}_{H^1(A; y^{1-2s}\d X)}:= \left(\int_{A} \vert  V\vert^2 y^{\a}\d X + \int_{A}\abs{\nabla_X V}^2 y^{\a}\d X\right)^\frac12.
\end{equation}
Moreover, we let
\begin{equation}
\label{eq:X}
\mathbb{X}^{2s}(A):=\left\{V:A\to \R^{\ell}: \nabla_X V \in L^2(A,y^{\a}\d X)\right\},
\end{equation}
endowed with the semi-norm 
\begin{equation}
\label{eq:normX}
\| V\|_{\mathbb{X}^{2s}(A)}: = \Big( \int_{A} y^{\a} \vert \nabla_X V\vert^2 \d X\Big)^{1/2}.
\end{equation} 
Thanks to \cite[Theorem 2.8]{Nek},  there exists a unique bounded linear operator (the trace operator) 
\begin{equation}
\label{eq:trace}
\text{Tr}: \mathbb{X}^{2s}(\R^{m+1}_+)\to \dot{H}^s(\R^m),
\end{equation}
such that $\text{Tr}V := V\big|_{\R^m\times \left\{0\right\}}$ for any $V\in C^1_{c}(\R^{m+1})$

Finally, given a Banach space $\mathcal{X}$ with norm $\norm{\cdot}_{\mathcal{X}}$,
 we let $L^p(a,b; \mathcal{X})$ ($p\in [1,+\infty]$) denote the 
space of classes of functions which are strongly measurable on $[a,b]$ and with values in 
$\mathcal{X}$ and such that 
\[
\norm{v}_{L^p(a,b;\mathcal{X})}<+\infty,
\]
where 
\[
\norm{v}_{L^p(a,b;\mathcal{X})}:= 
\begin{cases}
\left(\int_{a}^b \norm{v(t)}_{X}^p\d t\right)^{1/p} \qquad &\textrm{ if } \quad p\in [1,+\infty)\\
\textrm{ess sup}_{t\in (a,b)}\norm{v(t)}_{\mathcal{X}}\qquad &\textrm{ if }\quad p=+\infty.
\end{cases}
\]
Moreover, we let 
\[
H^1(a,b;\mathcal{X}):= \left\{v\in L^2(a,b;\mathcal{X}): \quad \frac{\d}{\d t}v \in L^2(a,b;\mathcal{X})\right\},
\]
where the derivative is understood in the sense of distributions (see, e.g., \cite[Chapter 1]{lionsmag})

\begin{defn}
\label{def:weak_sol_extension} Given a $u_0\in \dot{H}^s(\R^m, \mathbb S^{\ell-1})$,
a map $U:\R^{m+1}_+\times \mathbb{R}\to \mathbb{R}^{\ell}$, with {$\abs{U(x,0,t)} =1$ for almost every $(x,t)\in \R^m\times\R$}, is weak solution of \eqref{eq:extended_fhf} if  
\begin{eqnarray}
 & \partial_t U\in L^2(\mathbb R_+; L^2(\R^{m+1}_+,y^{\a}\d X)),\label{eq:weak_reg1}\\
 &U\in L^{\infty}(\mathbb{R}_+;\mathbb{X}^{2s}(\R^{m+1}_+)),\label{eq:weak_reg2}\\
  & U(x,0,t) = u_0(x)\,\,\,\,\,\, \hbox{ a.e. } (x,t)\in \R^m\times (-\infty,0],\label{eq:constraint}  
  \end{eqnarray}
  and
\begin{equation}
\label{eq:weak1}
\int_{0}^{\infty}\int_{\R^{m+1}_+}\left(\langle \partial_t U,\Phi\rangle 
+ \langle \nabla_{X}U,\nabla_{X}\Phi\rangle\right) y^{1-2s}\d X\d t = 0,
\end{equation}
 for any $\Phi \in L^{\infty}(\mathbb R_+;\mathbb{X}^{2s}(\R^{m+1}_+))\cap L^\infty\left(\R_+;L^\infty( \R^{m+1}_+\right))$ with $\Phi(x,0,t)\in T_{U(x,0,t)}\mathbb{S}^{\ell-1}$ 
 for almost every $(x,t)\in \R^m\times (0,+\infty)$. 
 \end{defn}
 
Note that if $U$ is a weak solution according to the above definition taking $\Phi$ with $\Phi(x,0,t)=0$ for almost any $(x,t)\in \R^m\times (0,+\infty)$ we get that $U$ verifies 
 \begin{equation}
 \label{eq:interior}
\displaystyle y^{\a}\frac{\partial U(X,t)}{\partial t} = \hbox{div}_{X}\left(y^{\a}\nabla_{X}U(X,t)\right),\,\,\,\hbox{ in } \mathbb{R}_+^{m+1}\times \R.
 \end{equation}

Owing to the previous definition, we now define what we mean by a weak solution of the original system \eqref{eq:fhf_1}: 
 
 \begin{defn}
\label{def:weak_sol}
Given $u_0\in \dot H^s(\R^m,\mathbb S^{\ell-1})$, 
we say that $u:\R^m\times \R\to \mathbb{S}^{\ell-1}$ is a weak solution of \eqref{eq:fhf_1} if the pair $(U,u)$ with $u=\text{Tr}(U)$ is a weak solution of the extended equation according to Definition \ref{def:weak_sol_extension}. 
\end{defn}

\begin{remark}
It would be possible of course to have a more straightforward definition of weak solutions for  \eqref{eq:fhf_1} by defining suitable function spaces so that the Fourier-Laplace multiplier $(\partial_t-\Delta)^s$ is well defined. This would actually introduce some additional technicalities which are unnecessary for our purposes and we do not pursue along this line. We refer the reader to \cite{rueland} for a related construction.   
\end{remark}

Following \cite{chen}, \cite{chen-struwe},  \cite{bookLW}, in the next Lemma we exploit the symmetry of the constraint 
$\mathbb{S}^{\ell-1}$ to write \eqref{eq:weak1} in an equivalent way that is more suited for the treatment of the nonlinear boundary condition in the limit procedure. 
The reformulation of \eqref{eq:weak1} makes use of test functions defined in $\R^{m+1}_+\times \R$ with values in $\extp_k(\R^{\ell})$. Therefore we have to introduce some notation. 
The exterior algebra of $\mathbb{R}^{\ell}$ is denoted by
 $\extp(\mathbb{R}^{\ell})$ and the exterior (or wedge) product by $\wedge$. If $e_1,\ldots,e_{\ell}$ is the canonical orthonormal basis of $\mathbb{R}^{\ell}$, we let $\extp_{k}(\mathbb{R}^{\ell})$ ($k\le \ell$) be the space of $k$-vectors, namely the subspace of $\extp(\mathbb{R}^{\ell})$ spanned by $e_{i_1}\wedge\ldots\wedge e_{i_k}$ with $(1\le i_1\le \ldots,i_k\le \ell$). We let $\langle \cdot, \cdot\rangle$ denote the scalar product in $\mathbb{R}^{\ell}$. We denote with the same symbol the induced scalar product in $\extp_{k}(\mathbb{R}^{\ell})$
 \begin{equation}
 \label{eq:scalar_prod}
 \langle v_1\wedge\ldots\wedge v_k,w_1\wedge\ldots\wedge w_k\rangle:= \text{det}\left(\langle v_i,w_i\rangle\right),
 \end{equation}
 where $v_i, w_i\in \mathbb{R}^{\ell}$ for $i=1,\ldots,k$.

We finally introduce the Hodge star operator 
\[
\star: \extp_{k}(\mathbb{R}^{\ell})\to \extp_{\ell-k}(\mathbb{R}^{\ell})\qquad 0\le k\le \ell,
\]
by 
\[
\star\left(e_{i_1}\wedge\ldots\wedge e_{i_k}\right):= e_{j_1}\wedge\ldots\wedge e_{j_{\ell-k}},
\]
where $j_1,\ldots,j_{\ell-k}$ is chosen in such a way that $e_{i_1},\ldots,e_{i_p},e_{j_1},\ldots,e_{j_{\ell-k}}$ is a (positive) basis of $\mathbb{R}^{\ell}$.  
The following hold
\begin{eqnarray*}
& \star(1) = e_{1}\wedge\ldots\wedge e_{\ell},\\
&  \star(e_{1}\wedge\ldots\wedge e_{\ell})=1,\\
& \star \star v = (-1)^{k(\ell-k)}v,\,\,\,\,\,\forall v\in \extp_{k}(\mathbb{R}^{\ell}),
\end{eqnarray*}
and
 \begin{equation}
 \label{eq:hodge}
 u\wedge \star v = \langle u,v\rangle e_1\wedge\ldots\wedge e_{\ell},\qquad \hbox{ for any } u,v\in
  \extp_{k}(\mathbb{R}^{\ell}).
 \end{equation}
 or, equivalently,
 \begin{equation}
 \label{eq:hodge2}
 \star\left(u\wedge \star v\right) = \langle u, v\rangle\qquad \hbox{ for any } u,v\in \extp_{k}(\mathbb{R}^{\ell}).
 \end{equation}
In the familiar case in which $u,v$ are vectors in $\mathbb{R}^3$, then the relation above with $\ell = 3$ and $k=1$ gives
\[
\star \left(u\wedge  v\right) = u\times v.
\]
Then, we introduce some new function space.
We set 
\[
\mathbb{X}^{2s}\left(\R^{m+1}_+;\extp_{\ell-2}(\mathbb{R}^{\ell})\right):= \left\{V:\R^{m+1}_+\to \extp_{k}(\mathbb{R}^\ell): \,\,\nabla_X V \in L^2\left(\R^{m+1}_+,y^{1-2s}\d X\right)\right\}.
\]
We have the following
\begin{lemma}
\label{lemma:equivalent_weak}
$U$ is a weak solution in the sense of Definition \ref{def:weak_sol_extension} if and only if $U$ verifies \eqref{eq:weak_reg1}, \eqref{eq:weak_reg2}, \eqref{eq:constraint}, {\color{red}\eqref{eq:interior}} and 
\begin{eqnarray}
\label{eq:weak2}
\displaystyle\int_{0}^{\infty}\int_{\R^{m+1}_+}\left(\langle \partial_t U,\star\left(U\wedge \Psi\right)\rangle +
\langle \nabla_{X}U,\star \left(U\wedge \nabla_{X}\Psi\right)\rangle\right) y^{1-2s}\d X\d t = 0,
\end{eqnarray}
 for any $\Psi\in L^\infty\left(\R_+;\mathbb{X}^{2s}\left(\R^{m+1}_+;\extp_{\ell-2}(\mathbb{R}^{\ell})\right)\right)\cap
  L^\infty\left(\R_+;L^\infty\left(\R^{m+1}_+;\extp_{\ell-2}(\mathbb{R}^{\ell})\right)\right)
 $.
\end{lemma}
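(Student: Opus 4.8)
This is a purely algebraic reformulation that exploits the symmetry of the target $\mathbb{S}^{\ell-1}$, and the plan is to run, in the present weighted/extended setting, the argument used for the harmonic map (heat) flow into spheres in \cite{chen}, \cite{chen-struwe}, \cite{bookLW}. The whole point is that, because $|U|=1$ on $\{y=0\}$, the vector fields of the form $\star(U\wedge\Psi)$ with $\Psi$ valued in $\extp_{\ell-2}(\R^\ell)$ are exactly the admissible tangential test fields along $\{y=0\}$, while the interior degenerate heat equation is common to both formulations and is untouched by this change of test functions. I would first record the two pointwise Hodge identities that do the work. From \eqref{eq:hodge}, from $\star \star v=(-1)^{\ell-1}v$ on $\extp_1(\R^\ell)$, and from $a\wedge a=0$, one gets for every $a\in\R^\ell$ and $c\in\extp_{\ell-2}(\R^\ell)$
\[
\langle a,\star(a\wedge c)\rangle\,e_1\wedge\cdots\wedge e_\ell=a\wedge\star \star(a\wedge c)=(-1)^{\ell-1}\,a\wedge a\wedge c=0,
\]
so $\star(a\wedge c)\perp a$; taking $a=U(x,0,t)$ and using $|U(x,0,t)|=1$ this shows $\star(U\wedge\Psi)(x,0,t)\in T_{U(x,0,t)}\mathbb{S}^{\ell-1}$. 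The second identity is $\star\bigl(a\wedge\star(a\wedge b)\bigr)=\pm\bigl(|a|^2b-\langle a,b\rangle a\bigr)$ for $a,b\in\R^\ell$ (the sign depending only on the parity of $\ell$; for $\ell=3$ it is $-a\times(a\times b)$), which for $|a|=1$ and $b\perp a$ becomes $b=\pm\star\bigl(a\wedge\star(a\wedge b)\bigr)$; along $\{y=0\}$ this is an explicit right inverse $\Phi\mapsto\pm\star(U\wedge\Phi)$ of $\Psi\mapsto\star(U\wedge\Psi)$.

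For the direction ``$U$ weak solution $\Rightarrow$ \eqref{eq:weak2}'' I fix $\Psi\in L^\infty(\R_+;\mathbb{X}^{2s}(\R^{m+1}_+;\extp_{\ell-2}(\R^\ell)))$ — which, after a routine truncation, I may assume bounded — and plug $\Phi:=\star(U\wedge\Psi)$ into \eqref{eq:weak1}. This $\Phi$ is admissible: $\nabla_X\Phi=\star(\nabla_XU\wedge\Psi)+\star(U\wedge\nabla_X\Psi)$, and since $|U|\le1$ a.e.\ (maximum principle for the degenerate heat equation applied to the subsolution $|U|^2$, whose trace on $\{y=0\}$ is $1$) both terms lie in $L^2(y^{1-2s}\,\d X)$, so $\Phi\in L^\infty(\R_+;\mathbb{X}^{2s}(\R^{m+1}_+))$, while $\Phi(x,0,t)\in T_{U(x,0,t)}\mathbb{S}^{\ell-1}$ by the first identity above. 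Substituting $\Phi$ into \eqref{eq:weak1} and noting that the cross term $\langle\nabla_XU,\star(\nabla_XU\wedge\Psi)\rangle=\sum_j\langle\partial_jU,\star(\partial_jU\wedge\Psi)\rangle$ vanishes pointwise (first identity with $a=\partial_jU$) leaves precisely \eqref{eq:weak2}. The converse is the same computation read backwards, so \eqref{eq:weak2} is equivalent to \eqref{eq:weak1} restricted to test fields of the special form $\Phi=\star(U\wedge\Psi)$. To upgrade to an arbitrary admissible $\Phi$ I split it along $U$ as $\Phi=\Phi^{\perp}+\Phi^{\parallel}$, with $\Phi^{\perp}$ pointwise orthogonal to $U$ (hence of the form $\star(U\wedge\Psi)$ by the second identity, so controlled by the previous step) and $\Phi^{\parallel}$ pointwise parallel to $U$ with vanishing trace on $\{y=0\}$ (so that, inserted into \eqref{eq:weak1}, it tests only the interior degenerate heat equation, which \eqref{eq:weak1} already encodes through test functions supported in the open half-space).

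The integration-by-parts manipulations and weighted-space bookkeeping are routine; the delicate point — and the step I expect to be the main obstacle — is this last splitting. One must make $\Phi=\Phi^{\perp}+\Phi^{\parallel}$ genuinely live in $\mathbb{X}^{2s}$ near the set $\{U=0\}$ (where the normalization $\Phi^{\perp}=\Phi-|U|^{-2}\langle\Phi,U\rangle U$ is singular), and keep clean track of the fact that the $U$-parallel component carries exactly the interior equation and nothing else, so that the reformulation \eqref{eq:weak2} together with \eqref{eq:weak_reg}--\eqref{eq:constraint} is equivalent to the full Definition \ref{def:weak_sol_extension} and not merely to its tangential part. A convenient way around the smallness difficulty is to prove the equivalence first at the level of the Ginzburg--Landau approximations $U_\eps$ — for which $|U_\eps|\le1$ and both the interior equation and the penalized boundary condition hold classically — and then pass to the limit $\eps\to0$.
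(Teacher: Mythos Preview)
Your forward direction is essentially the paper's: set $\Phi=\star(U\wedge\Psi)$, check tangentiality on $\{y=0\}$ via the orthogonality identity, and note that the cross term $\langle\nabla_XU,\star(\nabla_XU\wedge\Psi)\rangle$ vanishes pointwise. (The paper also tacitly uses $U\in L^\infty$ to put $\Phi$ in $\mathbb{X}^{2s}$; your appeal to the maximum principle is one way to obtain this, though strictly speaking it is not part of Definition~\ref{def:weak_sol_extension}.)

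For the converse the paper is far terser than you: it sets $\Psi:=\star(U\wedge\Phi)$, asserts $\star(U\wedge\Psi)=\Phi$, and plugs into \eqref{eq:weak2}. But your own second identity gives
\[
\star\bigl(U\wedge\star(U\wedge\Phi)\bigr)=(-1)^{\ell-1}\bigl(\langle U,\Phi\rangle\,U-|U|^2\Phi\bigr),
\]
which equals $\pm\Phi$ only where $|U|=1$ and $\Phi\perp U$ --- that is, on $\{y=0\}$, not in the interior of $\R^{m+1}_+$. So the paper's one-line converse has exactly the gap you flag: \eqref{eq:weak2} tests the interior equation only against fields orthogonal to $U$, whereas \eqref{eq:weak1} with interior-supported $\Phi$ encodes the full degenerate heat equation. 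Your splitting $\Phi=\Phi^\perp+\Phi^\parallel$ diagnoses this precisely, and as you correctly observe, the $\Phi^\parallel$ piece cannot be recovered from \eqref{eq:weak2} alone; read literally, the abstract equivalence is not quite true in the converse direction.

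For the paper's purposes (\S\ref{ssec:weak_sol}) this is harmless: the interior equation for $U$ passes to the limit directly from the $U_\eps$ via \eqref{eq:weak_conv1}--\eqref{eq:weak_conv2} tested against compactly supported $\Phi$, and the lemma is really only needed to transport the nonlinear boundary condition through the weak limit. Your proposed workaround --- running the argument at the level of the approximations and then passing to the limit --- is in effect what makes the paper's conclusion correct, even though the paper packages it as an abstract equivalence with an imprecise converse. Your caution here is well placed.
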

\begin{proof}
If $U$ is a weak solution in the sense of Definition \ref{def:weak_sol_extension}, 
then we take $\Phi = \star\left(U\wedge \Psi\right)$ where $\Psi \in L^\infty\left(\R_+;\mathbb{X}^{2s}\left(\R^{m+1}_+;\extp_{\ell-2}(\mathbb{R}^{\ell})\right)\right)$.
Thanks to the properties of the wedge product and of the Hodge-star operator, it is immediate to check that $\Phi$ is indeed a vector field. The fact that $\Phi\in \mathbb X^{2s}(\mathbb R^{m+1}_+)$ for a.e. $t$ is a consequence of the the fact that its components are product of 
functions which lie in $\mathbb X^{2s}(\mathbb R^{m+1}_+)$ and in $L^\infty$ for almost every $t$. We have to check that 
 $\Phi(x,0,t)\in T_{U(x,0,t)}\mathbb{S}^{\ell-1}$, namely that, denoting with $u(\cdot, \cdot):= U(\cdot, 0,\cdot)$ (in the sense of traces), 
\[
\langle u, \star\left(u\wedge \Psi\right)\rangle =0\qquad \hbox{ a.e. in } \R^m\times \R.
\] 
This is a consequence of \eqref{eq:hodge2}. In fact, 
\begin{equation}
\label{eq:ortogonal}
\langle u, \star\left(u\wedge \Psi\right)\rangle = \star\left(
 u \wedge\left(\star\star\left(u\wedge \Psi\right)\right) \right)=(-1)^{\ell-1}\star\left( \left(u\wedge (u\wedge \Psi)\right) \right)= 0.
\end{equation}
Finally, since the Hodge star operator commutes with the covariant differentiation (here derivation in $\R^{m+1}_+$) we have that 
\begin{eqnarray*}
\displaystyle\langle \nabla_X U ,\nabla_X(\star\left(U\wedge \Psi\right))\rangle & = &\langle \nabla_X U, \star\left( \nabla_X U\wedge \Psi\right)\rangle + \langle\nabla_X U, \star\left(U\wedge \nabla_X\Psi\right)\rangle\\
\displaystyle & = &\langle\nabla_X U, \star\left(U\wedge \nabla_X\Psi\right)\rangle,
\end{eqnarray*}
where the first addendum is treated as in \eqref{eq:ortogonal}.
As a result we have that $U$ verifies also \eqref{eq:weak2}. 

On the other hand, let $U$ be a function verifyng \eqref{eq:weak_reg1}, \eqref{eq:weak_reg2}, \eqref{eq:constraint}, \eqref{eq:weak1} and {\color{red}\eqref{eq:interior}}. 
For any given vector field $\Phi$ as in the Definition \ref{def:weak_sol_extension} we set 
\[
\Psi:=\star\left(U\wedge \Phi\right). 
\]
We have that $\Psi \in  L^\infty\left(\R_+;\mathbb{X}^{2s}\left(\R^{m+1}_+;\extp_{\ell-2}(\mathbb{R}^{\ell})\right)\right)$.
Moreover for almost any $(x,t)\in \R^m\times (0,+\infty)$ there holds 
\[
\star\left(U\wedge \Psi\right) = \Phi.
\]
Thus \eqref{eq:interior} and \eqref{eq:weak2} give that 
$U$ verifies also \eqref{eq:weak1} and thus it is a weak solution in the sense of Definition \eqref{def:weak_sol_extension}.
\end{proof}

According to \cite{ST}, given a solution $U$ of the above problem, its trace on $\mathbb{R}^{m}\times \left\{0\right\}$  
\[
u(x,t):=\text{Tr}U(x,y,t),
\]
is indeed a (weak) solution of \eqref{eq:fhf_1}. 
Weak solutions to \eqref{eq:extended_fhf} are constructed as limits of solution of the the (local) extension of the 
Ginzburg Landau approximation \eqref{eq:approx_fhf} of \eqref{eq:fhf_1}. Therefore, for any $\eps>0$ we consider 
the following system
\begin{equation}
\label{eq:approx_ext}
\begin{cases}
\displaystyle y^{\a}\frac{\partial U_\eps(X,t)}{\partial t} = \hbox{div}_{X}\big(y^{\a}\nabla_{X}U_\eps(X,t)\big)\,\,\,
&\hbox{ in } \mathbb{R}_+^{m+1}\times (0, \infty),\\
\\
U_\eps(x,0,t) = u_0(x), \,\,\,\,\,&\hbox{ in }\R^m \times (-\infty,0],\\
\\
\displaystyle\lim_{y\to 0^+}y^{\a}\frac{\partial U_\eps(X,t)}{\partial y} = -\frac{c_{s}}{\eps^2}\left(1-\abs{U_\eps}^2\right) U_\eps, \,\,\,&\hbox{ in } \R^m\times (0,+\infty).
\end{cases}
\end{equation}
%


\subsection{Existence for the approximate problem and a priori estimates}
\label{sec:ex_approx}
In this subsection we discuss the existence of the approximate problem \eqref{eq:approx_ext}.

First of all, we introduce some notation. 
For $\eps>0$ and $V\in \mathbb X^{2s}(\mathbb R^{m+1}_+)$, we introduce the following energy functional
\begin{equation}
\label{eq:energy}
\mathscr{E}_\eps(V, v) := \frac 12\int_{\R^{m+1}_+}y^{\a} \abs{\nabla_X V}^2 \d X +
\frac{c_s}{4\eps^2}\int_{\R^m} (1-\vert v\vert^2)^2 \d x, \ \textrm{for} \ V\in \mathcal{V},
\end{equation}
where $v = \textup{Tr}\,V$. 
We seek for minimizers in the space 
\[
\mathcal{V}:=\Big\{V\in \X: v:=\text{Tr}(V)\,\,|\,\, (|v|^2-1)^2\in L^1(\R^m)\Big\}.
\]

We let $U_0:\R^{m+1}_+\times (-\infty,0]\to \mathbb{R}^{\ell}$ be the Caffarelli-Silvestre extension of $u_0$, namely, for any $t\in (-\infty, 0]$,
\begin{equation}
\label{eq:U0}
\begin{cases}
\displaystyle-\text{div}\left(y^{\a}\nabla U_0(X,t)\right) = 0\qquad &\text{ in }\quad \R^{m+1}_+,\\
\displaystyle U_0(x,0,t) = u_0(x)\qquad &\text{ on }\quad \R^m.
\end{cases}
\end{equation}
Thanks to \cite{CS} we have that the extension operator
\begin{equation}
\label{eq:CS_ext}
E: v\mapsto V, \  \text{with }\ V \hbox{ the unique solution of }\eqref{eq:U0} \hbox{ with boundary datum } v,
\end{equation}
is an isometry from $\dot{H}^s(\R^m)$ to $\mathbb{X}^{2s}(\R^{m+1}_+)$ and we have 
\begin{equation}
\label{eq:trace2}
\norm{\text{Tr}V}_{\dot{H}^{s}(\R^m)} = 
\norm{E(\text{Tr}(V))}_{\mathbb{X}^{2s}(\R^{m+1}_+)}\le 
\norm{V}_{\mathbb{X}^{2s}(\R^{m+1}_+)},
\end{equation}
for any $V\in \mathbb{X}^{2s}(\R^{m+1}_+)$.

Towards the construction of a solution to \eqref{eq:approx_ext} we observe that,  since $U_0$ is constant with respect to time,
the function $U_0$ verifies
\begin{equation}
\begin{cases}
\displaystyle y^{\a}\frac{\partial U_0(X,t)}{\partial t} = \hbox{div}_{X}\big(y^{\a}\nabla_{X}U_0(X,t)\big),\,\,\,
&\hbox{ in } \mathbb{R}_+^{m+1}\times (-\infty,0],\\
\\
\displaystyle U_0(x,0,t) = u_0(x), \,\,\,\,\,&\hbox{ in }\R^m \times (-\infty,0].
\end{cases}
\end{equation}
Therefore, we study existence of a solution of the following initial and boundary value problem:
\begin{equation}
\label{eq:approx_ext_+}
\begin{cases}
\displaystyle y^{\a}\frac{\partial U_\eps(X,t)}{\partial t} = \hbox{div}_{X}\big(y^{\a}\nabla_{X}U_\eps(X,t)\big)\,\,\,
&\hbox{ in } \mathbb{R}_+^{m+1}\times (0,+\infty),\\
\\
U_\eps(x,y,0) = U_0(x,y), \,\,\,\,\,&\hbox{ in }\R^{m+1}_+\times \left\{0	\right\},\\
\\
\displaystyle\lim_{y\to 0^+}y^{\a}\frac{\partial U_\eps(X,t)}{\partial y} = -\frac{c_{s}}{\eps^2}\left(1-\abs{U_\eps}^2\right) U_\eps, \,\,\,&\hbox{ in } \R^m\times (0,+\infty).
\end{cases}
\end{equation}
As a result, if we let $\tilde{U}_\eps$ be a solution of the above problem, then 
\begin{equation}
\label{eq:approx_solution}
U_\eps(X,t):= 
\begin{cases}
\tilde{U}_\eps(X,t)\,\,\,\,\,\,&\hbox{ for }(X, t)\in \mathbb R^{m+1}_+\times (0,\infty),\\
U_0(X,t)\,\,\,\,\,\,\,\,\,\,\,\,\,\,&\hbox{ for }(X, t)\in \mathbb R^{m+1}_+\times (-\infty,0],
\end{cases}
\end{equation}
is a solution of \eqref{eq:approx_ext}.
As the behavior of $t\le 0$ of $U_\eps$ is ruled by $U_0$ which only depends on the known ``initial" condition $u_0$, with some abuse of notation we will use the same symbol $U_\eps$ to denote both a solution of \eqref{eq:approx_ext} and a solution of \eqref{eq:approx_ext_+}. 

We concentrate on \eqref{eq:approx_ext_+}. Since for the moment we work at fixed $\eps$, we do not indicate the dependence on $\eps$ in the notation. 
Existence of a solution can be proven, for instance, by using 
a time discretization scheme. More precisely, for $n\in \mathbb{N}$ we set $\tau:=\frac{T}{n}$ and $t^{k}:=\tau k$ for $k=0,\ldots,n$. 
We set $U^0:=U_0$ and we (iteratively) let $U^k$ (with $k=1,\ldots,n$) be the solution of 
\begin{equation}
\label{eq:time_discrete}
\begin{cases}
U^k - \tau y^{-(1-2s)}\text{div}\left(y^{\a}\nabla_{X}U^k\right) =U^{k-1} , \,\,\,\,\,&\hbox{ in }\R^{m+1}_+,\\
\\
\displaystyle\lim_{y\to 0^+}y^{\a}\frac{\partial U^k}{\partial y} = -\frac{c_{s}}{\eps^2}\left(1-\abs{U^k}^2\right) U^k \,\,\,\,\,\,&\hbox{ in }\R^m\times \left\{0\right\}.
\end{cases}
\end{equation}
Equation \eqref{eq:time_discrete} is the Euler-Lagrange equation for the minimizer of the energy (as in \eqref{eq:energy} we indicate with $u$ the trace of $U$ on $\mathbb{R}^m\times \left\{0\right\}$)
\[
F(U,u):= \frac{1}{2}\int_{\R^{m+1}_+}\frac{\abs{U-U^{k-1}}^2}{\tau}y^{\a}\d X +  \mathscr{E}_\eps(U,u).
\]
Existence of a minimizer in the space $\mathcal{V}$ is a consequence of the Direct method of Calculus of Variations. 
Once we have constructed the discrete solutions $U^k$ for $k=1,\ldots,n$, we can standardly introduce the piecewise constant and piecewise affine (in time) interpolants of the discrete solutions and pass to limit when $\tau$ (the time step) tends to $0$.
This limit procedure gives (we restore the $\eps$-dependence) a solution $U_\eps$ of \eqref{eq:approx_ext_+}. We let $u_\eps(\cdot,\cdot):=U_\eps(\cdot, 0,\cdot)$ (in the sense of traces).  
The function $U_\eps$ 
satisfies by construction the following a priori estimate (that correspond with testing \eqref{eq:approx_ext_+} with $\frac{\partial U_\eps}{\partial t}$ and integrating on $\R^{m+1}_+$)
\begin{eqnarray}
\label{eq:energy_est1}
\int_{\R^{m+1}_+}y^{\a}\abs{\frac{\partial U_\eps}{\partial t}}^2 \d X 
+ \frac{\d}{\d t} \mathscr{E}_\eps(U_\eps,u_\eps)(t)=0.
\end{eqnarray}
Thus, integrating with respect to time in $(0, {T})$ for $0<{T}<\infty$, we get (recall that 
$\textup{Tr}(U_0(\cdot, \cdot, 0)) = u_0(\cdot$ ) and that $u_0\in \mathbb S^{l-1}$ 
a.e. in $\R^m$)  
\begin{eqnarray}
\label{eq:energy_est2}
\int_{0}^{T}\int_{\R^{m+1}_+}y^{\a}\abs{ \frac{\partial U_\eps (X,t)}{\partial t}}^2 \d X \d t
+\int_{\R^{m+1}_+}y^{\a} \abs{ \nabla_X U_\eps (X,t)}^2 \d X\nonumber\\
 +
\frac{c_s}{4\eps^2}\int_{\R^m} (1-\vert u_\eps\vert^2)^2 \d x =
 \int_{\R^{m+1}_+}y^{\a} \vert \nabla_X U_\eps (X,0)\vert^2 \d X
 \le \norm{u_0}_{\dot{H}^s(\R^m)}^2.
\end{eqnarray}
Thus, we obtain 
\begin{eqnarray}  
\label{eq:energy_est}
&&\int_0^{T}\int_{\R^{m+1}_+}y^{\a}\abs{\frac{\partial U_\eps (X,t)}{\partial t}}^2 \d X \d t
+\int_{\R^{m+1}_+}y^{\a} \abs{\nabla_X U_\eps (X,t)}^2 \d X\nonumber
\\
&&\qquad\qquad \qquad\qquad\qquad\qquad\qquad\qquad+\frac{c_s}{4\eps^2}\int_{\R^m} (1-\vert u_\eps\vert^2)^2 \d x \le C, 
\end{eqnarray}
where the constant $C$ does not depend on $\eps$. Thus, 
we conclude that $\partial_t U_\eps$ and $U_\eps$ are uniformly bounded with respect to $\eps$ in the spaces 
\begin{equation}
\label{eq:energy_bound1}
L^2(\mathbb R_+; L^2(\R^{m+1}_+,y^{\a}\d X)) \,\,\textrm{ and }\,\, L^\infty(\mathbb R_+;\mathbb{X}^{2s}(\R^{m+1}_+)),
\end{equation}
respectively.
Moreover, recalling \eqref{eq:approx_solution} we have indeed constructed a solution (still denoted with $U_\eps$) of \eqref{eq:approx_ext_+} that satisfies
\begin{equation}
\label{eq:energy_bound}
\norm{U_\eps}_{H^1(\mathbb R_+; L^2(\R^{m+1}_+,y^{\a}\d X))} + \norm{U_\eps}_{L^\infty(\R_+;\mathbb{X}^{2s}(\R^{m+1}_+))}\le C.
\end{equation}
Note that $u_\eps$ is a solution of \eqref{eq:approx_fhf}.


\subsection{Limit procedure and existence of a weak solution}
\label{ssec:weak_sol}
The energy estimate \eqref{eq:energy_est} and weak compactness results 
guarantee the existence of a map 
$U:\R^{m+1}_+\times \R_+\to \R^{\ell}$ with 

\[
\partial_t U\in L^2\left(\mathbb R_+;L^2(\R^{m+1}_+,y^{\a}\d X)\right)\,\,\textrm{and }\,\,U\in  L^{\infty}\left(\R_+;\mathbb{X}^{2s}(\R^{m+1}_+)\right)
\]

and of a subsequence 
of $\eps$ (not relabeled)
such that 
\begin{eqnarray}
\label{eq:weak_conv1}
\partial_t U_\eps \xrightarrow{\eps\to 0} \partial_t U \qquad &\hbox{ weakly in } L^2\left(\mathbb R_+; L^2(\R^{m+1}_+,y^{\a}\d X)\right),\\
\nabla_X U_\eps \xrightarrow{\eps \to 0} \nabla_X U \qquad&\hbox{ weakly star in } L^{\infty}\left(\R_+;L^2(\R^{m+1}_+,y^{\a}\d X)\right).\label{eq:weak_conv2}
\end{eqnarray}
Moreover, the Aubin-Lions compactness Lemma gives that 
\begin{equation}
\label{eq:strong}
U_\eps \xrightarrow{\eps\to 0} U \hbox{ strongly in } 
L^2_{\text{loc}}\left(\mathbb R_+;L^2_{\text{loc}}(\R^{m+1}_+,y^{\a}\d X)\right).
\end{equation}
Now, denoting with $u$ and with $u_\eps$ the traces of $U$ and of $U_\eps$ on $\R^m\times \left\{0\right\}$ respectively,
we have that 
\begin{equation}
\label{eq:strong_traces}
u_\eps \xrightarrow{\eps \to 0} u\qquad \text{ in } L^2_{\text{loc}}(\mathbb R_+;L^2_{\text{loc}}(\R^m)),
\end{equation}
and thus $u_\eps\to u$ almost everywhere in 
$\R^m\times \mathbb R_+$, up to the extraction of a further subsequence. 
The convergence almost everywhere above combined with the fact that, thanks to estimate \eqref{eq:energy_est},
\[
\lim_{\eps\to 0}\int_{\R^m}\left(1-\vert u_\eps\vert^2\right)^2 \d x = 0,
\]
allows us to reach that $\abs{u(x,t)}=1$ for almost any 
$(x,t)\in \R^m\times \R_+$.
To conclude that $U$ is a weak solution of \eqref{eq:extended_fhf} in the sense of Definition \ref{def:weak_sol_extension} we have to prove that $U$ verifies \eqref{eq:weak2}. 
We consider $\Psi\in L^\infty\left(\R_+;\mathbb{X}^{2s}\left(\R^{m+1}_+;\extp_{\ell-2}(\mathbb{R}^{\ell})\right)\right)\cap L^\infty\left(\R_+;L^{\infty}\left(\R^{m+1}_+;\extp_{\ell-2}(\mathbb{R}^{\ell})\right)\right)$ and we test 
\eqref{eq:approx_ext_+} with 
 $\star\left( U_\eps\wedge \Psi\right)$. 
 For almost any 
 $(x,t)\in \R^m\times (0,+\infty)$
\begin{eqnarray*}
\bigg\langle\frac{1}{\eps^2}\left(1-\abs{u_\eps}^2\right)u_\eps, \star\left(u_\eps\wedge\Psi\right) \bigg\rangle
&= &\star\left(\frac{1}{\eps^2}\left(1-\abs{u_\eps}^2\right)u_\eps\wedge \star\star(u_\eps\wedge \Psi)\right)\\
&= &(-1)^{\ell-1}\star \left(\frac{1}{\eps^2}\left(1-\abs{u_\eps}^2\right)u_\eps\wedge (u_\eps \wedge \Psi)\right)
=0,
\end{eqnarray*}
thanks to \eqref{eq:hodge2} (recall \eqref{eq:ortogonal}). 
For $t\le 0$, we have that $U_\eps(x,0,t) = u_\eps(x,t) = u_0(x)$ and therefore, since $\abs{u_0}=1$ by hypothesis, we conclude that
\[
\frac{1}{\eps^2}\left(1-\abs{u_\eps(x,t)}^2\right)u_\eps(x,t) = 0,\,\,\,\hbox{ for a.e. } x\in \R^m \,\,\,\hbox{ and }t\le 0. 
\]
Thus, 
after integration by parts in space we conclude that $U_\eps$ verifies 
\begin{eqnarray}
\label{eq:weakUeps}
\displaystyle\int_{\mathbb R_+}
\int_{\R^{m+1}_+}\left(\langle \partial_t U_\eps,\star\left(U_\eps\wedge \Psi\right)\rangle +
\langle \nabla_{X}U_\eps,\star \left(U_\eps\wedge \nabla_{X}\Psi\right)\rangle\right) y^{1-2s}\d X\d t = 0.
\end{eqnarray}
Convergences \eqref{eq:weak_conv1}-\eqref{eq:strong} are enough to pass to the limit in equation \eqref{eq:weakUeps} and to obtain 
that $U$ 
verifies \eqref{eq:weak2}. Thus, thanks to Lemma \ref{lemma:equivalent_weak} we conclude that $U$ is indeed a weak solution of \eqref{eq:extended_fhf}. Therefore, the trace of $U$ on $\R^m\times \left\{0\right\}$ is a weak solution of \eqref{eq:fhf_1}.

\section{Monotonicity formula for the approximate problem}

This section is devoted to the derivation of monotonicity formula for \eqref{eq:approx_ext_+}.
For the later purpose, we will provide both global and local versions of such formulas.

For   $t_0\geq 0$  and $0\le R\le \frac{t_0}2$, we  set 
\begin{align*}T_R^+(t_0):=\{ (X,t)\in\R^{m+1}_+\times\R_+:  t_0-4R^2< t< t_0-R^2 \}, 
\quad T_1^+:=T_1^+(0),\\ \partial^+ T_R^+(t_0):=\{ (x,0,t)\in\R^m\times\{0\}\times\R_+:t_0 -4R^2<t<t_0-R^2 \} ,\quad \partial^+ T_1^+:=\partial T_1^+(0).\end{align*}
 For $X_0=(x_0,0)\in\R^m\times\{0\}$ and $0<s<1$, let 
 $$\mathcal{G}_{X_0,t_0}^s(X,t):=\frac{1}{\Gamma(s)(4\pi)^\frac m2 |t-t_0|^{\frac m2 +1-s}} e^{-\frac{|X-X_0|^2}{4|t-t_0|}}, \ t<t_0$$ be the backward fundamental solution of \eqref{eq:approx_ext_+}. 
 For $X_0=0$ and $t_0=0$, we write $\G^s=\G_{X_0,t_0}^s$. Note that 
  $$\nabla \G^s(X,t)=-\frac{X}{2|t|}\G^s(X,t),\ \G^s(RX,R^2t)=R^{-m-2+2s}\G^s(X,t),
  \ \forall(X,t)\in\mathbb R^{m+1}_+\times\mathbb R_{-}, \ R>0.$$

\begin{lemma}\label{lem-mono} For every $Z_0=(X_0,t_0)$ with $X_0\in\partial\R^{m+1}_+$ and $t_0>0$, 
if $U_\eps$ solves \eqref{eq:approx_ext_+} then the following two renormalized energies
\begin{align*}
\mathcal{D}(U_\eps, Z_0, R)&:=  R^2\Big(\frac12\int_{\R^{m+1}_+\times \{t_0-R^2\}}\G_{X_0,t_0}^s(X,t) y^{1-2s}|\nabla U_\eps|^2 dX\\
&\quad+\frac{c_s}{4\eps^2}\int_{\R^{m}\times \{t_0-R^2\}}\G_{X_0,t_0}^s(X,t)(1-|u_\eps|^2)^2dx\Big)
\end{align*}
and
\begin{align*}
\E(U_\eps, Z_0, R)&:=  \frac12\int_{T_R^+(Z_0)}\G_{X_0,t_0}^s(X,t) y^{1-2s}|\nabla U_\eps|^2 dXdt\\
&\quad+\frac{c_s}{4\eps^2}\int_{\partial^+ T_R^+(Z_0)}\G_{X_0,t_0}^s(X,t)(1-|u_\eps|^2)^2dxdt
\end{align*}
are monotone nondecreasing with respect to $R$. Namely,
\begin{equation}\label{mono}
\begin{cases}
\mathcal{D}(U_\ve, Z_0, r)\le \mathcal{D}(U_\ve, Z_0, R), \ 0<r\le R<\sqrt{t_0}, \\  
\E(U_\ve, Z_0, r)\leq \E(U_\ve, Z_0, R), \ \ 0<r\le R< \frac12\sqrt{t_0}.
\end{cases}
\end{equation}
\end{lemma}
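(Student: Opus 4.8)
The plan is to prove the monotonicity of $\mathcal D(U_\eps,Z_0,\cdot)$ first and then obtain that of $\E(U_\eps,Z_0,\cdot)$ by an elementary change of variables. Writing $Z_0=(X_0,t_0)$ and, for $\tau<t_0$,
\[
I(\tau):=\frac12\int_{\R^{m+1}_+}\G_{X_0,t_0}^s(X,\tau)\,y^{1-2s}|\nabla U_\eps(X,\tau)|^2\,dX+\frac{c_s}{4\eps^2}\int_{\R^m}\G_{X_0,t_0}^s(x,0,\tau)\,(1-|u_\eps(x,\tau)|^2)^2\,dx,
\]
one has $\mathcal D(U_\eps,Z_0,R)=R^2 I(t_0-R^2)$ and $\E(U_\eps,Z_0,R)=\int_{t_0-4R^2}^{t_0-R^2}I(t)\,dt$, so the substitution $t=t_0-\rho^2$ gives
\[
\E(U_\eps,Z_0,R)=\int_{R}^{2R}\frac{2}{\rho}\,\mathcal D(U_\eps,Z_0,\rho)\,d\rho=\int_{1}^{2}\frac{2}{\sigma}\,\mathcal D(U_\eps,Z_0,R\sigma)\,d\sigma .
\]
Hence, once $\rho\mapsto\mathcal D(U_\eps,Z_0,\rho)$ is shown to be nondecreasing on $(0,\sqrt{t_0})$, the monotonicity of $R\mapsto\E(U_\eps,Z_0,R)$ on $(0,\tfrac12\sqrt{t_0})$ follows by comparing the integrands in $\sigma$ pointwise; and the restrictions $R<\sqrt{t_0}$, $R<\tfrac12\sqrt{t_0}$ are exactly what is needed so that the slice $\{t=t_0-R^2\}$, respectively the slab $\{t_0-4R^2<t<t_0-R^2\}$, sits in $\R^{m+1}_+\times(0,\infty)$ where $U_\eps$ solves \eqref{eq:approx_ext_+}.

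For the monotonicity of $\mathcal D$, after translating so that $X_0=0$, I would differentiate $\mathcal D(U_\eps,Z_0,R)=R^2 I(t_0-R^2)$ in $R$, which reduces the claim to the differential inequality $I(\tau)-(t_0-\tau)I'(\tau)\ge0$ for $0<\tau<t_0$, and compute $I'(\tau)$ by differentiating under the integral and integrating by parts over $\R^{m+1}_+$. The ingredients are: that $U_\eps$ solves the \emph{homogeneous} weighted heat equation $y^{1-2s}\partial_tU_\eps=\mathrm{div}_X(y^{1-2s}\nabla_XU_\eps)$ in the bulk, the whole nonlinearity being carried by the co-normal condition $\lim_{y\to0^+}y^{1-2s}\partial_yU_\eps=-\tfrac{c_s}{\eps^2}(1-|u_\eps|^2)u_\eps$; that $\G^s_{X_0,t_0}$ solves the backward equation $y^{1-2s}\partial_t\G^s_{X_0,t_0}+\mathrm{div}_X(y^{1-2s}\nabla_X\G^s_{X_0,t_0})=0$ for $t<t_0$, together with $\nabla_X\G^s_{X_0,t_0}=-\tfrac{X}{2|t-t_0|}\G^s_{X_0,t_0}$ and the vanishing co-normal trace $\lim_{y\to0^+}y^{1-2s}\partial_y\G^s_{X_0,t_0}=0$ (valid because $2-2s>0$); and the weighted dilation identity $X\cdot\nabla_X(y^{1-2s}|\nabla_XU_\eps|^2)=y^{1-2s}\big(2\langle\nabla_XU_\eps,\nabla_X(X\cdot\nabla_XU_\eps)\rangle-(1+2s)|\nabla_XU_\eps|^2\big)$. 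The terms produced by $\partial_tU_\eps$ in the Dirichlet part, together with the boundary contributions generated by the co-normal condition (and $\langle u_\eps,\partial_\tau u_\eps\rangle=-\tfrac12\partial_\tau(1-|u_\eps|^2)$), arrange so that all $\eps^{-2}$-terms cancel against $\partial_\tau$ of the potential term — exactly as in Struwe's computation for the harmonic map heat flow and its Ginzburg--Landau version — while the remaining terms reorganize, via the dilation identity and a further integration by parts on $\{y=0\}$, into manifestly nonnegative contributions, the dimensional and weight-dependent pieces cancelling out. I expect the outcome to be the identity
\[
\frac{d}{dR}\,\mathcal D(U_\eps,Z_0,R)=2R^3\!\!\int_{\R^{m+1}_+\times\{t_0-R^2\}}\!\!\G^s_{X_0,t_0}\,y^{1-2s}\Big|\partial_tU_\eps+\frac{X\cdot\nabla_XU_\eps}{2(t-t_0)}\Big|^2 dX+\frac{c_sR}{2\eps^2}\!\!\int_{\R^m\times\{t_0-R^2\}}\!\!\G^s_{X_0,t_0}\,(1-|u_\eps|^2)^2\,dx\ \ge\ 0,
\]
i.e.\ the weighted parabolic analogue of the Struwe/Chen--Struwe monotonicity formula, in which the potential even contributes with a favourable sign; for $s=1/2$ this is the computation of \cite{chen-lin}, and for general $s$ it follows \cite{banerjeeGaro}.

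The point that requires care is not this (purely algebraic) identity but the justification of the differentiation under the integral and of the several integrations by parts over the unbounded, degenerate half-space. For the tails one uses the Gaussian decay of $\G^s_{X_0,t_0}$ together with the a priori bounds \eqref{eq:energy_est}, which give $\nabla_XU_\eps\in L^\infty_t L^2(y^{1-2s}\,dX)$ and $\partial_tU_\eps\in L^2_t L^2(y^{1-2s}\,dX)$ uniformly in $\eps$; to control the second-order quantities appearing in the dilation identity and in the boundary integrals one invokes interior and up-to-$\{y=0\}$ regularity for the \emph{fixed}-$\eps$ degenerate parabolic problem \eqref{eq:approx_ext_+}, available as in \cite{NS,ST}, which upgrades the energy bounds to local pointwise bounds on compact subsets of $\R^{m+1}_+\times(0,\infty)$ including the boundary; and the weight $y^{1-2s}$ together with the Gaussian kills every boundary-at-infinity term. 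With these analytic points settled, the computation above gives \eqref{mono}.
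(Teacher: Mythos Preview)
Your approach is correct and is the standard Struwe/Chen--Struwe computation, essentially the same as the paper's, but organised in the reverse order: the paper rescales $U_\eps$ to $U_{\eps,R}(X,t)=U_\eps(RX+X_0,R^2t+t_0)$, rewrites $\E(U_\eps,Z_0,R)$ as an integral over the fixed slab $T_1^+$, differentiates in $R$, and arrives at the identity
\[
\frac{d}{dR}\E(U_\ve,Z_0,R)=\frac{1}{2R}\int_{T_1^+}\frac{\G^s}{|t|}\,y^{1-2s}\bigl|X\!\cdot\!\nabla_X U_{\ve,R}+2t\,\partial_t U_{\ve,R}\bigr|^2\,dXdt+\frac{s\,c_s}{2\eps^2}R^{2s-1}\!\int_{\partial^+T_1^+}\!\G^s(1-|u_{\eps,R}|^2)^2\,dxdt\ge0,
\]
treating $\mathcal D$ as the analogous single-slice computation. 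You instead prove $\mathcal D$ first by direct differentiation on the moving slice and then recover $\E$ from the averaging identity $\E(R)=\int_1^2\frac{2}{\sigma}\,\mathcal D(R\sigma)\,d\sigma$, which is a clean way to avoid repeating the computation. One small correction: in your expected formula the coefficient of the potential term should carry an extra factor of $s$ (so $\dfrac{s\,c_s R}{2\eps^2}$ rather than $\dfrac{c_s R}{2\eps^2}$), coming from $\frac{d}{dR}R^{2s}=2sR^{2s-1}$ after rescaling; the sign is unaffected, so your conclusion stands.
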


\begin{proof} Here we just sketch a proof for $\E(U_\eps, Z_0, R)$. Let us set 
$$U_{\eps,R}(X,t):=U_\eps(RX+X_0,R^2t+t_0),\quad  u_{\eps,R}(x,t):=u_\eps(Rx+x_0,R^2t+t_0)$$
for $X\in\mathbb R^{m+1}_+$ and $t>-R^{-2} t_0$.
Then $U_{\ve,R}$ satisfies 

\begin{equation}
\label{eq:approx_ext-2}
\begin{cases}
y^{\a}\partial_t U_{\eps,R}(X,t)  = \hbox{div}_{X}\big(y^{\a}\nabla_{X}U_{\eps,R}(X,t)\big),
&\hbox{ in } \mathbb{R}^{m+1}_+\times (-R^{-2}t_0, \infty), \\
U_{\eps,R}(x,0,t) = u_{\eps,R}(x,t), &\hbox{ in } \R^m\times (-R^{-2}t_0,\infty),\\
U_{\eps,R}(x,0,t) = u_0(Rx+X_0),&\hbox{ in }\R^m \times (-\infty,-R^{-2}t_0],\\
\displaystyle\lim_{y\to 0^+}y^{\a} \partial_y U_{\eps,R}(X,t)  = -R^{2s}\frac{c_s}{\eps^2}(1-\vert u_{\eps,R}|^2) u_{\eps,R}, &\hbox{ in } \R^m\times (-R^{-2}t_0,\infty).
\end{cases}
\end{equation}

 By the change of variables  $X\mapsto RX+X_0$, $t\mapsto R^2t+t_0$,  we get 
 $$\E(U_\eps, Z_0, R):=  \frac12\int_{T_1^+}\G^sy^{1-2s}|\nabla_X U_{\eps,R}|^2 dXdt+\frac{c_s}{4\eps^2}R^{2s}\int_{\partial^+ T_1^+}\G^s(1-|u_{\eps,R}|^2)^2dxdt. $$ 
Therefore, integrating by parts we obtain 
\begin{align*}& \frac12\frac{d}{d R} \int_{T_1^+}\G^s y^{1-2s}|\nabla_X U_{\eps,R}|^2 dXdt\\
&=\int_{T_1^+}\G^s y^{1-2s}\nabla_X U_{\ve,R}\cdot\nabla_X \partial_R U_{\ve,R}dXdt   \\ 
& =- \int_{T_1^+}\hbox{div}_X[\G^s y^{1-2s}\nabla_X U_{\ve,R}]\cdot \partial_R U_{\ve,R}dXdt 
- \int_{\partial^+ T_1^+}\lim_{y\to 0^+}[\G^s y^{1-2s}\partial_y U_{\ve,R}\cdot \partial_R U_{\ve,R}]dxdt \\
&=- \int_{T_1^+}\G^s y^{1-2s}[ \frac{X}{2t}\cdot  \nabla_X U_{\ve,R}+ \partial_t U_{\ve,R}]\cdot \partial_R U_{\ve,R}dXdt \\ 
&\quad +R^{2s}\frac{c_s}{\ve^2}\int_{\partial^+ T_1^+} \G^s  (1-|u_{\ve,R}|^2)u_{\ve,R}\cdot \partial_R u_{\ve,R}dxdt .
\end{align*}
Here we have used the fact $\frac{X}{2t}=-\frac{X}{2|t|}$, and
that $\partial_R U_{\ve,R}(x,0,t)=\partial_R u_{\ve,R}(x,t)$ for $t>-R^{-2}t_0$, which is a consequence of 
$$\lim_{y\to 0^+} y\partial_y U_\eps (RX, R^2t)=\lim_{y\to0^+}y^{2s}[y^{1-2s}\partial _y U_{\ve}(RX,R^2t)]=0. $$
While
\begin{align*}
&\frac{d}{dR}\left\{\frac{c_s}{4\eps^2}R^{2s}\int_{\partial^+ T_1^+}\G^s(1-|u_{\eps,R}|^2)^2dxdt\right\}\\
&=\frac{sc_s}{2\eps^2}R^{2s-1}\int_{\partial^+ T_1^+}\G^s(1-|u_{\eps,R}|^2)^2dxdt\\
&\quad-R^{2s}\frac{c_s}{\ve^2}\int_{\partial^+ T_1^+} \G^s  (1-|u_{\ve,R}|^2)u_{\ve,R}\cdot \partial_R u_{\ve,R}dxdt.
\end{align*}
Since $$\partial_R U_{\eps,R}=\frac1R(X\cdot\nabla_X U_{\ve,R}+2t\partial_t U_{\ve,R}),$$  we obtain 
  \begin{align}\label{monotonicity1}
\nonumber  \frac{d}{dR}\E(U_\ve, Z_0, R)&=\frac{1}{2R}\int_{T_1^+} \frac{\G^s}{|t|}y^{1-2s}\left|X\cdot\nabla_X U_{\ve,R}(X,t)+2t\partial_t U_{\ve,R}(X,t)\right|^2dXdt \\ &\quad+\frac{sc_s}{2\eps^2}R^{2s-1}\int_{\partial^+ T_1^+}\G^s(1-|u_{\eps,R}|^2)^2dxdt\\&\geq 0.  \nonumber
 \end{align}
This yields the monotonicity of $\E(U_\ve, Z_0, R)$ with respect to $R>0$ and hence
completes the proof. \end{proof}

We will also need the following local energy inequality. Here we denote 
$$P_R^+(Z_0)=B_R^+(X_0)\times [t_0-R^2, t_0+R^2], \ 
\partial^+P_{R}^+(Z_0)=P_{R}^+(Z_0)\cap(\partial\R^{m+1}_+ \times (0,\infty))$$
 for $R>0$ and $Z_0=(X_0,t_0)\in\overline{\R^{m+1}_+}\times \R$.
\begin{lemma}\label{local_energy_ineq1}
If $U_\eps$ solves \eqref{eq:approx_ext_+} then for any $\eta\in C_0^\infty(\mathbb R^{m+1})$ it holds that
\begin{align}\label{local_energy_ineq2}
&\frac{d}{dt}\left\{\int_{\R^{m+1}_+} \frac12|\nabla U_\eps|^2\eta^2+\int_{\R^m} \frac{c_{\frac12}}{4\eps^2} (1-|u_\eps|^2)^2 \eta^2\right\}
+\frac12\int_{\R^{m+1}_+} |\partial_t U_\eps|^2\eta^2\nonumber\\
&\le 4\int_{\R^{m+1}_+} |\nabla U_\eps|^2|\nabla\eta|^2.
\end{align}
In particular, for any $Z_0=(X_0,t_0)\in\overline{\mathbb R^{m+1}_+} \times (0,\infty)$ and $0<R<\frac{\sqrt{t_0}}{2}$, we have that
\begin{equation}\label{local_energy_ineq3}
\int_{P_R^+(Z_0)} |\partial_t U_\eps|^2 \le CR^{-2}\left(\int_{P_{2R}^+(Z_0)}|\nabla U_\eps|^2+\int_{\partial^+P_{2R}^+(Z_0)} 
\frac{c_\frac12}{4\eps^2}(1-|u_\eps|^2)^2\right).
\end{equation}
\end{lemma}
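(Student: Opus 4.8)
The plan is to derive the differential inequality \eqref{local_energy_ineq2} by testing the equation \eqref{eq:approx_ext_+} against $\partial_t U_\eps \,\eta^2$, and then to integrate in time after a careful choice of cut-off to get \eqref{local_energy_ineq3}. First I would multiply the first equation in \eqref{eq:approx_ext_+} by $\eta^2\partial_t U_\eps$ (here $\eta=\eta(X)$ is independent of $t$), integrate over $\R^{m+1}_+$ against the weight, and integrate by parts in $X$ in the divergence term. The boundary term produced at $\{y=0\}$ is $-\int_{\R^m}\lim_{y\to0^+}(y^{\a}\partial_y U_\eps)\cdot\partial_t U_\eps\,\eta^2\,dx$, which by the Neumann condition in \eqref{eq:approx_ext_+} equals $\frac{c_s}{\eps^2}\int_{\R^m}(1-|u_\eps|^2)u_\eps\cdot\partial_t u_\eps\,\eta^2\,dx = -\frac{d}{dt}\big\{\frac{c_s}{4\eps^2}\int_{\R^m}(1-|u_\eps|^2)^2\eta^2\big\}$ (with $s=\tfrac12$, so $c_s=c_{1/2}$). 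The interior divergence term yields $-\int_{\R^{m+1}_+}y^{\a}\nabla_X U_\eps\cdot\nabla_X(\eta^2\partial_t U_\eps)\,dX = -\frac{d}{dt}\big\{\frac12\int y^{\a}|\nabla_X U_\eps|^2\eta^2\big\} - 2\int y^{\a}(\nabla_X U_\eps\cdot\nabla_X\eta)(\eta\,\partial_t U_\eps)\,dX$. The left side of the tested identity is $\int_{\R^{m+1}_+}y^{\a}|\partial_t U_\eps|^2\eta^2\,dX$. Rearranging gives an identity; absorbing the cross term by Cauchy--Schwarz and Young's inequality, $2\int y^{\a}(\nabla_X U_\eps\cdot\nabla_X\eta)\eta\,\partial_t U_\eps \le \tfrac12\int y^{\a}|\partial_t U_\eps|^2\eta^2 + 2\int y^{\a}|\nabla_X U_\eps|^2|\nabla_X\eta|^2$, produces \eqref{local_energy_ineq2} (with constant $4$ on the right, where one further slack is used; note I am suppressing the weight $y^{\a}$ in the display exactly as the statement does).

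For \eqref{local_energy_ineq3}, I would fix $Z_0=(X_0,t_0)$ and $0<R<\tfrac{\sqrt{t_0}}2$, choose $\eta\in C_0^\infty(\R^{m+1})$ with $\eta\equiv1$ on $B_R^+(X_0)$, $\mathrm{supp}\,\eta\subset B_{2R}^+(X_0)$, and $|\nabla\eta|\le C/R$, and integrate \eqref{local_energy_ineq2} over the time interval $[t_0-R^2,\,t_0+R^2]$. The term $\int_{t_0-R^2}^{t_0+R^2}\frac12\int_{\R^{m+1}_+}|\partial_t U_\eps|^2\eta^2$ dominates $\frac12\int_{P_R^+(Z_0)}|\partial_t U_\eps|^2$ from below. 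On the right, the time integral of $4\int|\nabla U_\eps|^2|\nabla\eta|^2$ is bounded by $CR^{-2}\int_{P_{2R}^+(Z_0)}|\nabla U_\eps|^2$. The two boundary terms from the fundamental theorem of calculus in time, evaluated at $t_0\pm R^2$, are $\pm\big[\frac12\int|\nabla U_\eps|^2\eta^2 + \frac{c_{1/2}}{4\eps^2}\int(1-|u_\eps|^2)^2\eta^2\big]$; the term at the upper endpoint appears with a good sign and can be dropped, while the term at the lower endpoint must be estimated. To control $\big[\frac12\int|\nabla U_\eps|^2\eta^2 + \frac{c_{1/2}}{4\eps^2}\int(1-|u_\eps|^2)^2\eta^2\big]\big|_{t=t_0-R^2}$ by a \emph{space-time} integral over $P_{2R}^+(Z_0)$, I would integrate the slice quantity once more: using \eqref{local_energy_ineq2} again (this time between a variable time $r\in[t_0-4R^2,t_0-R^2]$ and $t_0-R^2$, after shrinking $R$ appropriately, or more simply by a mean-value argument over the slab $[t_0-4R^2,t_0-R^2]$) one bounds the slice value at some, hence at a controlled, time, and then transports it; alternatively one applies the already-proven inequality on a slightly larger parabolic cylinder. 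This yields the stated bound with $P_{2R}^+(Z_0)$ on the right (at the cost of enlarging constants), which is what is claimed.

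The main obstacle is the last step: passing from the pointwise-in-time energy inequality \eqref{local_energy_ineq2} to a bound on a single time-slice of $\frac12|\nabla U_\eps|^2 + \frac{c_{1/2}}{4\eps^2}(1-|u_\eps|^2)^2$ purely in terms of a space-time integral of $|\nabla U_\eps|^2$ and the potential over $P_{2R}^+(Z_0)$. The delicate point is that \eqref{local_energy_ineq2} controls the \emph{increase} of the slice energy but not its absolute size; one needs the structural fact that along the flow the weighted local energy does not grow too fast, which follows by integrating \eqref{local_energy_ineq2} backward in time from the slice in question to a slightly earlier time where an averaged (hence controlled) value is available — this is why the inequality is stated on $P_{2R}^+$ rather than $P_R^+$, and why the hypothesis $R<\tfrac{\sqrt{t_0}}2$ (ensuring we stay in the region $t>0$ where $U_\eps$ solves the flow) is needed. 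Everything else is routine integration by parts and Young's inequality.
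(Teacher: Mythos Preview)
Your proposal is correct and follows essentially the same approach as the paper: test \eqref{eq:approx_ext_+} against $\partial_t U_\eps\,\eta^2$, integrate by parts using the Neumann condition, and apply Young's inequality to get \eqref{local_energy_ineq2}; then for \eqref{local_energy_ineq3} use a Fubini/mean-value argument on the slab $[t_0-4R^2,t_0-R^2]$ to find a good time slice $t_*$ and integrate \eqref{local_energy_ineq2} forward from $t_*$. The only cosmetic difference is that the paper integrates directly from $t_*$ to (a time beyond) $t_0$ in one step, rather than first integrating over $[t_0-R^2,t_0+R^2]$ and then separately ``transporting'' control from $t_*$ to the slice at $t_0-R^2$ as you describe --- your route works equally well and is the same idea.
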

\begin{proof} Multiplying the first equation of \eqref{eq:approx_ext_+} by $\partial_t U_\eps\eta^2$ and integrating the resulting
equation over $\R^{m+1}_+$, and applying the third equation of \eqref{eq:approx_ext_+} in integration by parts, we obtain
\begin{align*}
&\frac{d}{dt}\left\{\int_{\R^{m+1}_+} \frac12|\nabla U_\eps|^2\eta^2+\int_{\R^m} \frac{c_{\frac12}}{4\eps^2} (1-|u_\eps|^2)^2 \eta^2\right\}
+\int_{\R^{m+1}_+} |\partial_t U_\eps|^2\eta^2\nonumber\\
&=-2\int_{\R^{m+1}_+}\langle\eta\partial_t U_\eps, \nabla U_\eps\nabla\eta\rangle
\le \frac12\int_{\R^{m+1}_+} |\partial_t U_\eps|^2\eta^2+2\int_{\R^{m+1}_+}|\nabla U_\eps|^2|\nabla\eta|^2.
\end{align*}
This yields \eqref{local_energy_ineq2}. To see \eqref{local_energy_ineq3},  let $\eta\in C^\infty_0(\R^{m+1})$ be a cut-off function of $B_R(X_0)$, i.e.
$0\le\eta\le 1$, $\eta\equiv 1$ in $B_R(X_0)$, $\eta\equiv 0$ outside $B_{2R}(X_0)$, and $|\nabla\eta|\le 4R^{-1}$.  By Fubini's theorem, there exists $t_*\in (t_0-4R^2, t_0-R^2)$
such that
\begin{align}\label{fubini}
&\int_{B_{2R}^+(X_0)\times\{t_*\}}\frac12|\nabla U_\eps|^2+\int_{(B_{2R}(X_0)\cap\partial \R^{m+1}_+)\times\{t_*\}} \frac{c_{\frac12}}{4\eps^2} \left(1-\abs{u_\eps}^2\right)^2\nonumber\\
&\le \frac{16}{R^2} \left(\int_{P_{2R}^+(Z_0)}\frac12|\nabla U_\eps|^2+\int_{\partial^+P_{2R}^+(Z_0)} \frac{c_{\frac12}}{4\eps^2} \left(1-\abs{u_\eps}^2\right)^2\right).
\end{align}
Now if we integrate \eqref{local_energy_ineq2} for $t_*\le t\le t_0+R^2$ and apply \eqref{fubini}, we obtain that 
\begin{align*} 
\int_{P_R^+(Z_0)} |\partial_t U_\eps|^2&\le \int_{B_{2R}^+(X_0)\times\{t_*\}}\frac12|\nabla U_\eps|^2+\int_{(B_{2R}(X_0)\cap\partial \R^{m+1}_+)\times\{t_*\}} \frac{c_{\frac12}}{4\eps^2} \left(1-\abs{u_\eps}^2\right)^2\\
&\quad+CR^{-2}\int_{P_{2R}^+(Z_0)}|\nabla U_\eps|^2\\
&\le CR^{-2}\left(\int_{P_{2R}^+(Z_0)}|\nabla U_\eps|^2+\int_{\partial^+P_{2R}^+(Z_0)} 
\frac{c_\frac12}{4\eps^2}\left(1-\abs{u_\eps}^2\right)^2\right).
\end{align*} 
This implies \eqref{local_energy_ineq3} and completes the proof.
\end{proof}

\section{$\varepsilon-$regularity result}

From now on, we will always assume $s=\frac12$. Therefore, according to 
\[
c_{s}:= \frac{\Gamma(1-s)}{2^{2s-1}\Gamma(s)},
\]
we have that $c_{1/2}=1$. 

As previously mentioned, we now focus only on the system \eqref{eq:approx_ext_+}. 
We will derive a priori estimates of its solutions $U_\eps$ under a smallness condition
on the renormalized energy.

For  $X_0=(x_0,0)$, $t_0>0$ and $0<R\leq\sqrt{t_0}$,  we set 
$$P_R^+(X_0,t_0):=\left\{(X,t)\in\overline{\mathbb{R}^{m+1}_+}\times(0,\infty) :   |X-X_0|\leq R,\, t_0-R^2\leq t\leq t_0+R^2 \right \}.$$
\begin{lemma} Assume $U_\ve$ is a bounded, smooth solution of \eqref{eq:approx_ext_+}. 
Then $|U_\epsilon|\le 1$
in $\mathbb R^{m+1}_+\times (0, \infty)$.
\end{lemma}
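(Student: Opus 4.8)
The plan is to establish the maximum principle $|U_\eps|\le 1$ via a standard energy/Kato-inequality argument applied to the scalar function $w:=|U_\eps|^2-1$, exploiting the favorable sign of the nonlinear Robin boundary term. First I would compute, using the first equation of \eqref{eq:approx_ext_+} tested against $U_\eps$ itself, the evolution equation for $|U_\eps|^2$: one finds
\[
y^{\a}\partial_t\Big(\tfrac12|U_\eps|^2\Big) = \operatorname{div}_X\big(y^{\a}\langle U_\eps,\nabla_X U_\eps\rangle\big) - y^{\a}|\nabla_X U_\eps|^2 \le \operatorname{div}_X\big(y^{\a}\tfrac12\nabla_X|U_\eps|^2\big),
\]
so that $w$ is a (weighted, degenerate) subsolution of the parabolic equation $y^{\a}\partial_t w \le \operatorname{div}_X(y^{\a}\nabla_X w)$ in $\R^{m+1}_+\times(0,\infty)$, with the boundary behavior dictated by the third equation of \eqref{eq:approx_ext_+}.

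Next I would test the differential inequality for $w$ against the truncation $w_+:=\max\{w,0\}\,\zeta^2$, where $\zeta$ is a smooth spatial cutoff (needed because $\R^{m+1}_+$ is unbounded), and integrate over $\R^{m+1}_+$. Integration by parts in $X$ produces the boundary integral $-\int_{\R^m}\lim_{y\to0^+}\big(y^{\a}\partial_y U_\eps\big)\cdot U_\eps\, w_+\zeta^2\,dx$; substituting the Robin condition $\lim_{y\to0^+}y^{\a}\partial_y U_\eps = -\tfrac{c_s}{\eps^2}(1-|U_\eps|^2)U_\eps$ converts this into $-\tfrac{c_s}{\eps^2}\int_{\R^m}(1-|u_\eps|^2)|u_\eps|^2 w_+\zeta^2\,dx = \tfrac{c_s}{\eps^2}\int_{\R^m}w\,|u_\eps|^2\,w_+\zeta^2\,dx = \tfrac{c_s}{\eps^2}\int_{\R^m}|u_\eps|^2 w_+^2\zeta^2\,dx \ge 0$ on the set where $w>0$. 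Crucially this term has the right (dissipative, stabilizing) sign and can simply be discarded, so that one is left with
\[
\frac{d}{dt}\int_{\R^{m+1}_+}\tfrac12 w_+^2\,\zeta^2\,y^{\a}\,dX + \int_{\R^{m+1}_+}|\nabla_X w_+|^2\zeta^2\,y^{\a}\,dX \le C\int_{\R^{m+1}_+}w_+^2|\nabla_X\zeta|^2\,y^{\a}\,dX.
\]
Since $U_\eps$ is assumed bounded, $w_+\in L^\infty$, and letting $\zeta\uparrow 1$ together with Gronwall's inequality and the initial condition $w_+(\cdot,0)=(|U_0|^2-1)_+=0$ (because $U_0$ is the harmonic, hence maximum-principle-obeying, extension of the $\mathbb S^{\ell-1}$-valued $u_0$, so $|U_0|\le 1$) forces $\int_{\R^{m+1}_+}w_+^2\,y^{\a}\,dX\equiv 0$, i.e. $|U_\eps|\le 1$ everywhere.

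The main technical obstacle is handling the spatial cutoff on the unbounded domain $\R^{m+1}_+$ rigorously: one must ensure the energy quantities $\int w_+^2\,y^{\a}\,dX$ and $\int|\nabla_X w_+|^2\,y^{\a}\,dX$ are finite so that the cutoff argument and Gronwall closure are legitimate, which relies on the a priori regularity class \eqref{eq:energy_bound} established for $U_\eps$. A secondary point requiring care is the degenerate weight $y^{\a}=y^{1-2s}$: all integrations by parts in the $X$-variable across $y=0$ must be justified via the established well-posedness and regularity theory of \cite{NS,ST,CS} for the weighted operator, in particular the fact that $\lim_{y\to0^+}y^{\a}\partial_y U_\eps$ exists in $L^2$ and equals the prescribed Robin data; modulo these standard facts, the sign of the boundary term does all the work and no further structure of the nonlinearity is needed.
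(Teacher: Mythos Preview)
Your approach is sound in spirit but takes a genuinely different route from the paper. The paper argues by contradiction via the Hopf boundary point lemma: with $\Phi_\eps:=|U_\eps|^2-1$ one checks $(\partial_t-\Delta)\Phi_\eps=-2|\nabla U_\eps|^2\le 0$, so a positive maximum of $\Phi_\eps$ must sit at a point $Z_0=(x_0,0,t_0)$ on $\partial\R^{m+1}_+$; Hopf's lemma then gives $\partial_y\Phi_\eps(Z_0)<0$, while the Robin condition yields $\partial_y\Phi_\eps(Z_0)=\tfrac{2c_s}{\eps^2}\Phi_\eps(Z_0)|u_\eps(Z_0)|^2>0$, a contradiction. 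This is a purely pointwise argument and uses no integral quantities. (Note also that the lemma sits in the section where $s=\tfrac12$ has already been fixed, so the weight $y^{1-2s}$ is trivial and your secondary concern about degeneracy at $y=0$ does not arise here.)

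Your energy/Kato method is a legitimate alternative and is closer to the weak-solution framework, but the step ``letting $\zeta\uparrow 1$ together with Gronwall'' is where the real difficulty hides, and your proposed fix via \eqref{eq:energy_bound} does not close it. With a standard cutoff $\zeta=\zeta_R$ the error term is of order $R^{-2}\int_{B_{2R}\setminus B_R}w_+^2\,y^{1-2s}\,dX$; mere boundedness of $w_+$ makes this of size $\sim R^{m-1}$, which does not vanish. The bound \eqref{eq:energy_bound} controls $\nabla_X U_\eps$ and $\partial_t U_\eps$ in weighted $L^2$, but gives no $L^2$ control on $w_+=(|U_\eps|^2-1)_+$ itself, so Gronwall on the full space is not justified from the hypotheses ``bounded, smooth'' alone. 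To salvage the argument you would need an extra ingredient---e.g.\ a Gaussian weight $e^{-\delta|X|^2/(T-t)}$ in place of $\zeta$, or a Stampacchia iteration on level sets---rather than a bare appeal to \eqref{eq:energy_bound}. In fairness, the paper's pointwise argument also tacitly assumes the supremum of $\Phi_\eps$ is attained on the unbounded domain, which strictly speaking needs a Phragm\'en--Lindel\"of patch; but that is a more routine adjustment than the one your integral method requires.
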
 
\begin{proof} We argue by contradiction. Suppose the conclusion were false. Then by the maximum principle
there exists
$Z_0=(x_0, 0, t_0)\in\partial{\mathbb R^{m+1}_+}\times (0, \infty)$ such that 
$$(|U_\ve|^2-1)(Z_0)=\max_{Z\in\overline{\mathbb R^{m+1}_+}\times [0, \infty)}(|U_\ve|^2-1)(Z)>0.$$
Set $\Phi_\ve=|U_\ve|^2-1$. Then it satisfies
\begin{equation*}
\begin{cases} \partial_t \Phi_\ve-\Delta \Phi_\ve=-2|\nabla U_\ve|^2\le 0 & \ {\rm{in}}\ \ \ \mathbb R^{m+1}_+\times [0,\infty),\\
\displaystyle\lim_{y\to 0^+} \frac{\partial\Phi_\ve}{\partial y}(X,t)=\frac{2c_s}{\ve^2}\Phi_\ve(x,0, t)|u_\ve|^2(x,t)
& \ {\rm{on}}\ \ \ \partial\mathbb R^{m+1}_+\times [0,\infty).
\end{cases}
\end{equation*} 
It follows from the Hopf boundary Lemma that 
$$\frac{\partial\Phi_\ve}{\partial y}(Z_0)=\displaystyle\lim_{y\to 0^+} \frac{\partial\Phi_\ve}{\partial y}(x_0,y,t_0)<0.
$$
On the other hand, the boundary condition of $\Phi_\ve$ yields that
$$
\frac{\partial\Phi_\ve}{\partial y}(Z_0)=\frac{2c_s}{\ve^2}\Phi_\ve(Z_0))|u_\ve|^2(Z_0)>0.
$$
We get the desired contradiction.
\end{proof}

The next Lemma is a clearing-out result, which plays a crucial role in the small-energy regularity result. 

\begin{lemma}\label{clearing-out} There exists $\eps_0>0$ such that 
if $U_\ve$ is a  smooth solution of \eqref{eq:approx_ext_+} with $|U_\ve|\leq 1$, that satisfies
$$\E(U_\ve, (X_0,t_0), 1)\leq\ve_0$$
for some $X_0\in\partial\mathbb R^{m+1}_+$ and $t_0>4$, 
then 
$$|U_\ve|\geq\frac12  \ \ {\rm{on}}\ \ P_\delta^+(X_0,t_0)$$ 
for some $\delta>0$ independent of $U_\ve$, $X_0$ and $t_0$.  
\end{lemma}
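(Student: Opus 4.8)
The plan is to prove this clearing-out (or $\varepsilon$-regularity) lemma by a contradiction/blow-up argument combined with the monotonicity formula from Lemma~\ref{lem-mono} and the local energy inequality from Lemma~\ref{local_energy_ineq1}. Suppose the statement fails: then for every $\varepsilon_0 = 1/j$ there is a solution $U_{\varepsilon_j}$ with $|U_{\varepsilon_j}|\le 1$, a base point $Z_j = (X_j, t_j)$ with $X_j\in\partial\R^{m+1}_+$, $t_j > 4$, satisfying $\E(U_{\varepsilon_j}, Z_j, 1)\le 1/j$, yet $\inf_{P^+_{1/j}(X_j,t_j)}|U_{\varepsilon_j}| < \frac12$ (one cannot merely ask for $|U|<1/2$ somewhere in a fixed ball, since that is what we want to rule out; the correct negation, given that $\delta$ must be uniform, is that no fixed $\delta$ works, so we get a sequence of radii $\delta_j\to 0$ — or more cleanly, fix $\delta$ small and suppose $|U_{\varepsilon_j}|(Q_j)<\frac12$ for points $Q_j\in P^+_\delta(Z_j)$).

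The key steps, in order: (i) Translate so that $Z_j = (0,0)$ (using the fact that the equation is translation invariant in $x$ and $t$, and $t_j>4$ guarantees the parabolic cylinder $P^+_2$ sits inside the region where $U_{\varepsilon_j}$ solves the genuine flow, not the frozen initial data). (ii) Use monotonicity: since $\E(U_{\varepsilon_j}, 0, 1)\le 1/j$, Lemma~\ref{lem-mono} gives $\E(U_{\varepsilon_j}, 0, r)\le 1/j$ for all $r\le 1$, hence smallness of the scale-invariant energy at all scales down to $0$ around the origin, and by a covering/translation argument, smallness of $\E(U_{\varepsilon_j}, Z, r)$ for all $Z$ in a neighborhood of the origin and all small $r$ (this is where one typically needs the nondecreasing-ness plus a continuity/semicontinuity of $Z\mapsto \E$, and the standard trick of comparing Gaussians centered at nearby points). (iii) Derive a uniform bound: smallness of the renormalized energy at all scales, together with the local energy inequality \eqref{local_energy_ineq3} controlling $\int|\partial_t U_{\varepsilon_j}|^2$, feeds into a Bochner-type / reverse-Poincaré estimate for $e_{\varepsilon_j}(U_{\varepsilon_j}) = \frac12|\nabla U_{\varepsilon_j}|^2 + \frac{c_{1/2}}{4\varepsilon_j^2}(1-|u_{\varepsilon_j}|^2)^2$ (the latter term living on the boundary), yielding an a priori $L^\infty$ bound on the energy density $\sup_{P^+_{1/2}} e_{\varepsilon_j} \le C\,\E(U_{\varepsilon_j},0,1) \le C/j \to 0$. (iv) Conclude: once $|\nabla U_{\varepsilon_j}|$ and the potential term are uniformly small in $P^+_\delta$, an integration argument along segments from the boundary — using that $\lim_{y\to 0}y^{0}\partial_y U = -\frac{c_{1/2}}{\varepsilon^2}(1-|u|^2)u$ forces $(1-|u_{\varepsilon_j}|^2)\to 0$ on the boundary (from the potential smallness), and then $|U|$ stays close to $1$ off the boundary because $|\nabla U|$ is small — shows $|U_{\varepsilon_j}|\ge\frac12$ on $P^+_\delta$ for $j$ large, contradicting the choice of $Q_j$.

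The main obstacle I expect is step (iii): establishing the $\varepsilon$-independent interior-and-boundary gradient estimate $\sup e_{\varepsilon}\le C\,\E(U_\varepsilon,\cdot,1)$ from small renormalized energy. This is the heart of every $\varepsilon$-regularity theorem for Ginzburg–Landau approximations of harmonic map flows, and here it is complicated by (a) the degenerate/singular weight $y^{1-2s}$ with $s=\frac12$ (so the weight is $y^0=1$ — actually benign for $s=1/2$! — but the Neumann-type boundary condition is still delicate), (b) the nonlinear boundary condition carrying the $\varepsilon^{-2}$ factor, which must be absorbed using the smallness of the boundary potential energy and a Pohozaev/trace argument, and (c) making the Bochner inequality for $e_\varepsilon$ work up to the boundary. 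The standard route is a Schoen-type argument: assume the max of $(R-\dist)^2 e_\varepsilon$ over $P^+_R$ is large, rescale at the max point to get a nontrivial entire (or half-space) solution of the limiting equation with finite but positive energy, and contradict either a Liouville theorem or the energy smallness via monotonicity. I would follow that scheme, invoking the monotonicity formula to kill the blow-up limit, and treat the boundary terms by the same Hodge-wedge reformulation used in Lemma~\ref{lemma:equivalent_weak} to handle the constraint structure cleanly.
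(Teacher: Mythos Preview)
Your proposal has a genuine structural gap: step (iii) is essentially the full $\varepsilon$-regularity gradient estimate (this is Lemma~\ref{eps_gradient_est} in the paper), and in the paper's logical order that estimate is proved \emph{after} the present clearing-out lemma, using it as input via the polar decomposition $U_\varepsilon=\rho_\varepsilon\omega_\varepsilon$ (Proposition~\ref{prop-regu}), which is only well-defined once $|U_\varepsilon|\ge\tfrac12$ is known. Your Schoen-type blow-up would, after rescaling near the boundary, produce a sequence still carrying a nonlinear Neumann condition with some $\tilde\varepsilon$, and to extract a limit and run a Liouville argument you would again need uniform-in-$\tilde\varepsilon$ boundary $C^{1+\alpha}$ bounds---precisely the kind of estimate that, in this setting, the paper only obtains \emph{after} establishing $|U_\varepsilon|\ge\tfrac12$. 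So as written the argument is circular (or at best defers the entire difficulty to an unproved Liouville/compactness statement). Step (iv) also leans on (iii): smallness of the boundary potential is only in $L^2$ unless you already have the pointwise energy bound.

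The paper's proof is direct and avoids all of this. It splits on the size of $\varepsilon$. For $\varepsilon\ge\tfrac12$ the boundary nonlinearity is uniformly bounded; the antiderivative $V_\varepsilon(x,y,t)=\int_0^y U_\varepsilon(x,s,t)\,ds$ satisfies the heat equation with bounded right-hand side and zero Dirichlet data, giving a uniform $C^{1,1}$ (hence Lipschitz) bound on $U_\varepsilon$. Monotonicity then forces the $L^2$ boundary potential to be small on a fixed piece of $\Gamma$, and the Lipschitz bound upgrades this to $|u_\varepsilon|\ge\tfrac45$ pointwise there, whence $|U_\varepsilon|\ge\tfrac12$ nearby. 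For $\varepsilon\le\tfrac12$ one rescales by $\varepsilon^2$ to reduce to the previous case near the boundary (this handles $0\le y\lesssim\varepsilon^2$); for $y\gtrsim\varepsilon^2$ one uses the interior heat-equation gradient estimate together with a Poincar\'e-type inequality for the $1$-Lipschitz function $d(\xi)=|1-|\xi||$ and the energy smallness from monotonicity (via Lemma~\ref{lem-conti}) to bound $d(U_\varepsilon)$ directly. No Bochner formula, no blow-up limit, and no Liouville theorem are needed.
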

\begin{proof} We divide the proof into two cases:

\smallskip
\noindent\textbf{Case 1}: $\ve\geq\frac12$. 
Set $$ V_\ve(x,y,t)=\int_0^y U_\ve(x,s,t)ds,\quad y>0.$$ Then $$\partial_y(\partial_t-\Delta) V_\ve(x,y,t) =0,$$ that is, $(\partial_t-\Delta) V_\ve$ is independent of $y\in (0,\infty)$. In particular, we get   $$(\partial_t-\Delta) V_\ve(x,y,t)=(\partial_t-\Delta) V_\ve(x,0,t)=-\partial_y U_\ve(x,0,t)=\frac{c_{\frac12}}{\ve^2}(1-|u_\ve|^2)u_\ve,\quad y>0.$$ 
Note that 
$$\displaystyle\frac{c_{\frac12}}{\ve^2}|1-|u_\ve|^2||u_\ve|\leq4 c_{\frac12} \ {\rm{in}}\ \R^{m+1}_+\times (0,\infty), 
\ {\rm{and}}\ V_\ve=0 \ {\rm{for}}\ y=0.
$$  
Hence, by the standard parabolic theory \cite[Theorem 2.13]{Lieberman} we conclude that $V_\ve$ is bounded   in $C^{2,1}(P_\frac12^+(X_0,t_0))$.    In other words, $U_\ve$ is bounded   in $C^{1,1}(P_\frac12^+(X_0,t_0))$, which gives \begin{align}\label{est-4.1}|U_\ve(X,t)-U_\ve(\tilde X,\tilde t)|\leq C_1(|X-\tilde X|+|t-\tilde t|^\frac12)\quad\text{for } (X,t) \ {\rm{and}}\  (\tilde X,\tilde t)\in P_\frac12^+(X_0,t_0),\end{align} for some $C_1>0$. Now we choose $\delta_1\in (0,\frac12)$ such that   $\delta_1 C_1\leq \frac18$. 
By the monotonicity inequality \eqref{mono} we get that 
\begin{eqnarray*}
&&\frac{c_{\frac12}}{4\ve^2}\int_{\partial\R^{m+1}_+\cap B_{\delta_1}(X_0)}\int_{t_0-4\delta_1^2}^{t_0-\delta_1^2}(1-|u_\ve|^2)^2dxdt\\
&&\leq C\delta_1^{-(m+1)} \frac{c_{\frac12}}{4\ve^2}\int_{\partial\R^{m+1}_+}\int_{t_0-4\delta_1^2}^{t_0-\delta_1^2}
\mathcal{G}_{X_0, t_0}^\frac12(X,t)(1-|u_\ve|^2)^2dxdt\\
\\&&\leq C\delta_1^{-(m+1)}\E(U_\ve,\delta_1, X_0, t_0)\\
&&\leq C\delta_1^{-(m+1)}\E(U_\ve,1, X_0, t_0)\\
&&\leq C\delta_1^{-(m+1)}\ve_0.
\end{eqnarray*}
Therefore, by choosing $\ve_0>0$ sufficiently small we obtain that 
$$|u_\ve|\geq\frac45\quad\text{for }|X-X_0|\leq \delta_1,\, X\in\partial\R^{m+1}_+,\, t_0-4\delta_1^2\leq t\leq t_0-\delta_1^2.$$   From the choice of $\delta_1>0$ we conclude that $|U_\ve|\geq\frac12$ on $P_{\delta_1}^+(X_0,t_0)$, thanks to \eqref{est-4.1}. 

\medskip
\noindent\textbf{Case 2}: $\ve\leq\frac12$. 
Let $X_1=(x_1,y_1)\in B_{\delta}(X_0)$ with $y_1\geq 0$  and $t_1\in(t_0-\delta^2,t_0+\delta^2)$ being fixed. Set $\tilde X_1=(x_1,0)$, and 
 $$\tilde U_\ve(X,t)=U_\ve(\tilde X_1+\ve^2 X, t_1+\ve^4t).$$ Then $\tilde  U_\ve$ satisfies 
\begin{equation}
\label{eq:approx_ext-21}
\begin{cases}
\partial_t \tilde U_{\eps}(X,t)  =\Delta \tilde U_{\eps}(X,t)
&\hbox{ in } \mathbb{R}^{m+1}_+\times \R, \\
\tilde U_{\eps}(x,0,t) =\tilde  u_{\eps}(x,t) &\hbox{ in } \R^m\times (-\ve^{-4}t_1,\infty),\\
\tilde U_{\eps}(x,0,t) =\tilde  u_0(x,t) &\hbox{ in }\R^m \times (-\infty,-\ve^{-4}t_1],\\
\displaystyle\lim_{y\to 0^+} \frac{\partial\tilde  U_{\eps}}{\partial y}(X,t)  = -{c_\frac12} (1-\vert \tilde u_{\eps}|^2)\tilde  u_{\eps}
&\hbox{ in } \R^m\times (-\ve^{-4}t_1, \infty).
\end{cases}
\end{equation}
By the monotonicity inequality \eqref{mono}, we obtain 
$$\E(\tilde U_\ve,(0,0),1)= \E(U_\ve, (\tilde X_1,t_1), \ve^2)\leq \E(U_\ve,(\tilde X_1,t_1), \frac12)
\leq C(\ve_0+\ve_1),$$ where the last inequality follows from  Lemma \ref{lem-conti}. 
 Now one can proceed as in  Case 1 to show that 
 $$|\tilde U_\ve|\geq\frac12\quad\text{for }(X,t)\in P_{\delta_1}^+(0,0),$$ 
 for some $\delta_1>0$ independent of $\ve>0$. In particular, we obtain \begin{align}\label{est-small-y}|U_\ve(X,t)|\geq\frac12\quad\text{for }(X,t)\in P_{\delta_1}^+(X_0,t_0),\,0\leq y\leq \delta_1\ve^2. \end{align} 
 
 Next we find a small $\delta_2>0$ such that $|U_\ve(X,t)|\geq\frac 12$ on $P_{\delta_2}^+(X_0,t_0)$, provided $\ve_0>0$ is sufficiently small. Note that it suffices to consider points $(X,t)\in P_{\delta_2}^+(X_0,t_0)$, $X=(x,y)$ for which $\ve^2\delta_1\leq y\leq\delta_2$, and $\ve>0$  satisfies $\ve^2\leq \frac{\delta_2}{\delta_1}$. 
 To this end we fix an arbitrary   point $(X_1,t_1) \in P_{\delta}^+(X_0,t_0)$ with $X_1=(x_1,y_1),\,  y_1>0$, and    set $R:=\frac14y_1$, $\tilde X_1=(x_1,0)$.

  We claim that for $\delta>0$ small (depending on $\ve_0$) it holds that  
  \begin{align}\label{claim-1}  \int_{t_1-10R^2}^{t_1}\int_{B^+_{10R}(\tilde X_1)}|\nabla U_\ve|^2dXdt\leq C(\ve_0+\ve_1)R^{m+1}, \end{align}  and  \begin{align}\label{claim-2} \frac{1}{\ve^2} \int_{t_1-10R^2}^{t_1}\int_{ |x-x_1|<10R} (1-|u_\ve|^2 )^2dxdt\leq C(\ve_0+\ve_1)R^{m+1}. \end{align} 
To prove the above claims, let us first choose $\delta>0$ small so that we can apply Lemma \ref{lem-conti} with $\ve_1=\ve_0$, $X_1=\tilde X_1 $ and $t_1=t_1+4R^2$. 
Then  by the monotonicity inequality \eqref{mono} and Lemma \ref{lem-conti},
$$\E(U_\ve, (\tilde X_1,t_1+4R^2), 2R)\leq \E(U_\ve, (\tilde X_1,t_1+4R^2),\frac12)\leq C(\ve_0+\ve_1).  $$   
Since  
$$G_{\tilde X_1,t_1+4R^2}(X,t)\sim R^{-(m+1)} \ \ {\rm{for}}\ \ |X-\tilde X_1|\leq 10R
\ \ {\rm{and}}\ \ t_1-10R^2\leq t\leq t_1,$$
\eqref{claim-1} and \eqref{claim-2} follow immediately.  

As  $B_{{\color{red} 4}R}(X_1)\subset B_{10 R}^+(\tilde X_1)$, and   $\nabla_X U_\ve$ satisfies the heat equation on $\R^{m+1}_+\times (0,\infty)$,   by  \eqref{claim-1} we obtain 
(see \cite[page 61]{Evans}) 
\begin{align}\label{est-nabla} |\nabla_X U_\ve (X,t)|\leq \frac{C\sqrt{\ve_0+\ve_1}}{R}\quad\text{for }|X-X_1|\leq 4R
\ \ {\rm{and}}\ \, t_1-9R^2\leq t\leq t_1, \end{align} 
and consequently,  by the standard parabolic estimates,
\begin{align}\label{est-time}   
|\partial_t U_\ve(X,t)|\leq \frac{C\sqrt{\ve_0+\ve_1}}{R^2}
\quad\text{for }|X-X_1|\leq 3R \ \ {\rm{and}}\ \ t_1-8R^2\leq t\leq t_1. 
\end{align}
Setting  $$ \bar U_\ve(t):= \fint_{B_{R}(X_1)}U_\ve(X,t)dX,$$ we see that 
\begin{align} \label{average}
|U_\ve(X,t)-\bar U_\ve(t_1) |\leq C\sqrt{\ve_0+\ve_1} 
\quad\text{for }|X-X_1|\leq R\ \ {\rm{and}}\ \ t_1-8R^2\leq t\leq t_1 , 
\end{align}  thanks to \eqref{est-nabla}-\eqref{est-time}. 

For $\xi\in\R^\ell$ we set $d(\xi)=|1-|\xi||$. Then $d$ is $1$-Lipschitz. Since 
$$d(\bar U_\ve(t_1))\leq d(U_\ve(X,t))+|U_\ve (X,t)-\bar U_\ve(t_1)|, \ \forall X\in B_R(X_1),
\ t_1-R^2\le t\le t_1,$$ 
taking an average integral one gets
\begin{align*} 
&d(\bar U_\ve(t_1)) \\
& \leq  \fint_{t_1-R^2}^{t_1}\fint_{B_{R}(X_1)}   d(U_\ve(X,t))dXdt  + \fint_{t_1-R^2}^{t_1}\fint_{B_{R}(X_1)} |U_\ve (X,t)-\bar U_\ve(t_1) |dXdt \\ 
&\leq \fint_{t_1-R^2}^{t_1}\fint_{B_{R}(X_1)}   d(U_\ve(X,t))dXdt
+C\sqrt{\ve_0+\ve_1},\end{align*} 
thanks to \eqref{average}.  
By Jensen's inequality, and using that $B_R(X_1)\subset B_{5R}^+(\tilde X_1)$, we get \begin{align*} &\left( \fint_{t_1-R^2}^{t_1}\fint_{B_{R}(X_1)}   d(U_\ve(X,t))dXdt\right)^2 \\ 
&\leq \frac{C}{R^{m+1}} \fint_{t_1-R^2}^{t_1}\int_{B^+_{5R}(\tilde X_1)}   d(U_\ve(X,t))^2dXdt  \\
& \leq \frac{C}{R^{m+1}} \fint_{t_1-R^2}^{t_1} \left( R\int_{|x-x_1|<5R} d(u_\ve(x,t))^2dx+R^2\int_{B_{5R}^+(\tilde X_1)}| \nabla d(U_\ve(X,t))|^2dX \right)dt \\ 
&\leq \frac{C}{R^{m+2}}\int_{t_1-R^2}^{t_1}\int_{|x-x_1|<5R}(1-|u_\ve|^2)^2dxdt+\frac{C}{R^{m+1}}\int_{t_1-R^2}^{t_1}\int_{B_{5R}^+(\tilde X_1)} |\nabla U_\ve|^2dXdt\\ &\leq C({\ve_0}+\ve_1)\left(\frac{\ve^2}{R}+1\right).   \end{align*}  
The second inequality above follows from the Poincar\`e inequality,  
and we have used that $d$ is $1$-Lipschitz in the third inequality, and the last inequality follows from the estimates \eqref{claim-1}-\eqref{claim-2}.  

As we have mentioned before, we only need to consider  $4R= y_1\geq  \delta_1\ve^2$ and $\ve^2<\frac{\delta}{\delta_1}$. Thus we obtain 
$$\displaystyle d(\bar U_\ve(t_1))\leq C\sqrt{\ve_0+\ve_1}(1+\frac{1}{\sqrt{\delta_1}}).$$ 
Hence, if $\ve_0>0$ is sufficiently small,  from \eqref{average} we have 
$$|U_\ve(X_1,t_1)|\geq\frac12.$$ 
Consequently, corresponding to this $\ve_0$ we obtain $\delta=\delta_2>0$ as determined by Lemma \ref{lem-conti} for the choice of $\ve_1=\ve_0$. 
\end{proof}

\medskip
Next we show that under a smallness condition on the renormalized energy, $U_\eps$ enjoys a gradient estimate. More precisely, we have

\begin{lemma}\label{eps_gradient_est} There exists $\eps_0>0$,  depending only on $m$, such that if $U_\eps$ is a smooth solution of \eqref{eq:approx_ext_+}, with $|U_\eps|\le 1$, which satisfies, for $Z_0=(X_0, t_0)\in \partial\R^{m+1}_+
\times (0,\infty)$ and some $0<R<\frac{\sqrt{t_0}}2$, 
\begin{equation}\label{small_cond1}
\mathcal{E}(U_\eps, Z_0, R)<\eps_0^2,
\end{equation}
then
\begin{equation}\label{small_est1}
\displaystyle\sup_{P_{\delta_0 R}^+(Z_0)} R^2 |\nabla U_\eps|^2 \le C\delta_0^{-2}
, \ \ \ \displaystyle\sup_{P_{\delta_0 R}^+(Z_0)} R^4|\partial_t U_\eps|^2 \le C\delta_0^{-4},
\end{equation}
where $0<\delta_0<1$ and $C>0$ are independent of $\eps$.
\end{lemma}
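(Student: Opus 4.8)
The plan is to follow the standard $\varepsilon$-regularity scheme for harmonic map heat flows (à la Schoen, Struwe, Chen–Struwe, Chen–Lin), adapted to the degenerate/boundary setting of \eqref{eq:approx_ext_+}. By scaling we may assume $R=1$, $Z_0=(0,0)$ after translation; the monotonicity formula \eqref{mono} is scale-invariant in the renormalized energy, so the smallness hypothesis \eqref{small_cond1} is preserved. First I would record the a priori facts already available: $|U_\varepsilon|\le 1$, the local energy inequality \eqref{local_energy_ineq2}–\eqref{local_energy_ineq3} bounding $\int|\partial_t U_\varepsilon|^2$ by the local energy, and the monotonicity of $\E(U_\varepsilon,\cdot,r)$, which upgrades the single-scale smallness \eqref{small_cond1} into smallness of $r^{-(m+1)}\int_{P_r^+}(|\nabla U_\varepsilon|^2+\varepsilon^{-2}(1-|u_\varepsilon|^2)^2)$ for all scales $r$ up to a fixed fraction of $1$ (since $\mathcal G^{1/2}_{Z_0}\sim r^{-(m+1)}$ on $P_r^+$). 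The clearing-out Lemma \ref{clearing-out} also gives $|U_\varepsilon|\ge\tfrac12$ on a definite parabolic neighborhood, so $u_\varepsilon/|u_\varepsilon|$ is well-defined there, which is needed to compare with the constrained (sphere-valued) problem.

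The core is a Morrey-decay / hole-filling argument run on the normalized energy. Define, for $Z_1\in P_{1/2}^+(Z_0)$ and $r$ small, $\Phi(Z_1,r):=r^{-(m-1)}\int_{P_r^+(Z_1)}\big(|\nabla U_\varepsilon|^2+\tfrac{c_{1/2}}{4\varepsilon^2}(1-|u_\varepsilon|^2)^2 y\text{-trace terms}\big)$ — more precisely the density natural to the $(m+1)$-dimensional parabolic measure — and show a decay estimate $\Phi(Z_1,\theta r)\le \tfrac12\Phi(Z_1,r)+C(\theta r)^{2\alpha}$ for a suitable fixed $\theta\in(0,1)$, once $\Phi(Z_1,1)\le\varepsilon_0^2$. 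The mechanism: freeze coefficients and compare $U_\varepsilon$ on $P_r^+(Z_1)$ with the solution $W$ of the linear degenerate heat equation $y^{a}\partial_t W=\mathrm{div}_X(y^a\nabla_X W)$ with Neumann (or Dirichlet on the trace) data matching $U_\varepsilon$; linear estimates (Caffarelli–Silvestre / Nekvinda-type boundary regularity for the weighted parabolic operator, cf. \cite{NS},\cite{ST}) give that $W$ has the clean decay $\int_{P_{\theta r}^+}|\nabla W|^2\le C\theta^{m+1}\int_{P_r^+}|\nabla W|^2$, while the difference $U_\varepsilon-W$ is controlled by the "tangential" right-hand side — which, after using Lemma \ref{lemma:equivalent_weak}-type structure ($\nabla U_\varepsilon$ orthogonal to $U_\varepsilon$ modulo the Ginzburg–Landau term) and the smallness of $\Phi$, is quadratic in $\nabla U_\varepsilon$ and hence absorbable. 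The potential term $\varepsilon^{-2}(1-|u_\varepsilon|^2)^2$ is handled exactly because \eqref{mono} controls its normalized integral uniformly in $\varepsilon$; this is the whole point of replacing the constraint by the boundary Ginzburg–Landau penalization. Iterating the decay inequality yields $\Phi(Z_1,r)\le C r^{2\alpha}$ for all small $r$, i.e. a parabolic Morrey/Campanato bound, uniformly in $\varepsilon$.

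From the Morrey decay I would then bootstrap to pointwise bounds. A parabolic Morrey bound on $|\nabla U_\varepsilon|^2$ (plus the controlled potential) puts the right-hand side of the equation for $\nabla U_\varepsilon$ — which solves a linear weighted parabolic system with the nonlinear boundary term — into a subcritical space; interior estimates (Moser iteration for the degenerate parabolic operator, as in the cited \cite{Lieberman}-style or weighted De Giorgi–Nash–Moser theory) give $\sup_{P_{\delta_0}^+(Z_0)}|\nabla U_\varepsilon|^2\le C\int_{P_{2\delta_0}^+}|\nabla U_\varepsilon|^2 y^{a}\le C$, which after undoing the scaling is exactly $\sup_{P_{\delta_0 R}^+(Z_0)}R^2|\nabla U_\varepsilon|^2\le C\delta_0^{-2}$. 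The time-derivative bound then follows by differentiating the equation in $t$ (or by the Caccioppoli/interpolation argument underlying \eqref{local_energy_ineq3} applied at the smaller scale, upgraded by the now-known gradient bound), giving $\sup R^4|\partial_t U_\varepsilon|^2\le C\delta_0^{-4}$.

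The main obstacle, and where the argument genuinely differs from the classical case, is the boundary regularity for the \emph{degenerate} weighted parabolic operator $y^{a}\partial_t-\mathrm{div}(y^a\nabla)$ with the nonlinear Neumann condition $\lim_{y\to0^+}y^a\partial_y U_\varepsilon=-\tfrac{c_{1/2}}{\varepsilon^2}(1-|U_\varepsilon|^2)U_\varepsilon$: one must establish the linear decay estimate for $W$ near $\{y=0\}$ and the Moser iteration up to the boundary in the $y^a$-weighted setting, $\varepsilon$-uniformly, with the boundary nonlinearity treated as a lower-order perturbation whose smallness comes entirely from the monotonicity formula. Everything else is the familiar comparison-plus-iteration machinery; the weighted-space boundary analysis (leaning on \cite{NS}, \cite{ST}, \cite{CS}, \cite{Nek}) is the technical heart.
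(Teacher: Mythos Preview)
Your proposal takes a genuinely different route from the paper, and as written it has a real gap.

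The paper does \emph{not} run a Morrey--decay/comparison iteration. Instead it uses the classical point--selection (Schoen--type) blow-up: after scaling to $R=1$ and invoking the clearing-out Lemma~\ref{clearing-out} to get $|U_\varepsilon|\ge\tfrac12$ on $P_\delta^+(Z_0)$, it chooses $\sigma_\varepsilon\in(0,\delta)$ maximizing $(\delta-\sigma)^2\max_{P_\sigma^+(Z_0)}(|\nabla U_\varepsilon|^2+|\partial_t U_\varepsilon|)$, blows up at the maximum point with factor $e_\varepsilon$, and obtains a rescaled solution $\tilde U_\varepsilon$ satisfying $|\nabla\tilde U_\varepsilon|^2+|\partial_t\tilde U_\varepsilon|\le 4$ and $\tfrac12\le|\tilde U_\varepsilon|\le 1$ on a unit-scale region. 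The key new input is Proposition~\ref{prop-regu}, a uniform boundary $C^{1+\alpha}$ estimate proved by writing $U_\varepsilon=\rho_\varepsilon\omega_\varepsilon$, so that $\omega_\varepsilon$ has homogeneous Neumann data, while $\rho_\varepsilon$ satisfies an oblique/Robin condition $\varepsilon^2\partial_\nu\rho_\varepsilon\sim(1-\rho_\varepsilon)$; a barrier/maximum-principle argument (and the Green-function analysis of Appendix~A) shows $1-\rho_\varepsilon=O(\varepsilon^2)$ with $C^\alpha$ control of $\partial_\nu\rho_\varepsilon$ \emph{uniformly in $\varepsilon$}. This $C^{1+\alpha}$ bound is then fed back into the small-energy claim \eqref{small_energy1} to force $r_\varepsilon\le 2$, which is \eqref{small_est1}.

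The gap in your scheme is precisely where the paper invests its effort. In your comparison step you assert that the difference $U_\varepsilon-W$ is governed by a term ``quadratic in $\nabla U_\varepsilon$ and hence absorbable.'' But for the approximated problem the boundary flux is $\partial_\nu U_\varepsilon=\varepsilon^{-2}(1-|u_\varepsilon|^2)u_\varepsilon$, which is \emph{not} quadratic in the gradient; only the integral of its \emph{square} times $\varepsilon^2$ (i.e.\ $\varepsilon^{-2}\int(1-|u_\varepsilon|^2)^2$) is controlled by monotonicity. An energy estimate for $U_\varepsilon-W$ would require at least $\int_{\Gamma_r}\varepsilon^{-2}|1-|u_\varepsilon|^2|$, which by Cauchy--Schwarz carries an uncontrolled factor $\varepsilon^{-1}$. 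Without an independent argument that $1-|u_\varepsilon|=O(\varepsilon^2)$ pointwise --- which is exactly the content of Proposition~\ref{prop-regu} and requires the a~priori Lipschitz bound furnished by point selection --- your iteration does not close uniformly in $\varepsilon$. (Incidentally, since $s=\tfrac12$ here, $a=1-2s=0$ and the operator is the ordinary heat operator; the ``degenerate weighted'' boundary theory you flag as the technical heart is not the issue --- the singular $\varepsilon^{-2}$ in the Neumann data is.)
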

\begin{proof} By scaling, we may assume that $t_0>4$ and $R=1$. 
Let $\delta>0$ be as determined by Lemma \ref{clearing-out}. Since $U_\ve$ is smooth in $\overline{\R^{m+1}_+}\times (0,\infty)$,  there exists $\sigma_\ve\in(0,\delta)$ such that 
$$(\delta-\sigma_\ve)^2\max_{P_{\sigma_\ve}^+(Z_0)}\big(|\nabla U_\ve|^2+|\partial_t U_\ve|\big)
=\max_{0\leq\sigma\leq\delta} (\delta-\sigma)^2 \max_{P_{\sigma}^+(Z_0)}\big(|\nabla U_\ve|^2+|\partial_t U_\ve|\big)
.$$ 
Let $Z_1^\eps=(X_1^\ve,t_1^\ve)\in {P_{\sigma_\ve}^+(Z_0)}$ be such that 
$$\max_{P_{\sigma_\ve}^+(Z_0)}\big(|\nabla U_\ve|^2+|\partial_t U_\ve|\big)
=\big(|\nabla U_\ve|^2+|\partial_t U_\ve|\big)(Z_1^\eps):=e_\ve^2.$$ 
Set $\rho_\ve=\frac12(\delta-\sigma_\ve)$. Since $P_{\rho_\eps}^+(Z_1^\eps)\subset P_{\rho_\ve+\sigma_\ve}^+(Z_0)$,  we have that 
\begin{align}
&\max_{P_{\rho_\ve}^+(Z_1^\ve)}(|\nabla U_\ve|^2+|\partial_t U_\ve|)\nonumber\\
&\le \max_{P_{\rho_\ve+\sigma_\ve}^+(Z_0)}(|\nabla U_\ve|^2+|\partial_t U_\ve|)
\leq 4e_\ve^2.   
\end{align}
Write $X_1^\ve=(x_1^\ve, y_1^\ve)\in \R^{m+1}_+$ with $x_1^\ve\in\R^m$ and $y_1^\ve\ge 0$. 
Set $\tilde{X}_1^\ve=(x_1^\ve, 0)\in\partial\R^{m+1}_+$ and
define 
$$\tilde U_\ve(X,t)=U_\ve\big(\tilde X_1^\ve+\frac{X}{{e_\ve}}, t_1^\ve+\frac{t}{e_\ve^2}\big), 
\ (X,t)\in P_{r_\ve}^+(Y_1^\ve, 0),$$ 
where $r_\ve=\rho_\ve{e_\ve}$ and $Y_1^\ve=(0, y_1^\eps {e_\ve})\in\R^{m+1}_+$. 
Setting 
$$\quad \frac{1}{\tilde\ve^{2}}:=\frac{c_{\frac12}}{\ve^{2}{e_\ve} },$$ 
one gets  
\begin{equation} 
\begin{cases}
(\partial_t-\Delta)\tilde U_\ve=0 & \ {\rm{in}} \  P_{r_\ve}^+(Y_1^\ve, 0), \\ 
\partial_\nu \tilde U_\ve=\frac{1}{\tilde\ve^2 } (1-|\tilde u_\ve|^2)\tilde u_\ve & \ {\rm{on}}\  \partial P_{r_\ve}^+(Y_1^\ve, 0)\cap (\partial\R^{m+1}_+\times \R), \\ 
(|\nabla\tilde U_\ve|^2+|\partial_t \tilde U_\ve|)(X,t)\leq 4 & \ {\rm{in}}\   P_{r_\ve}^+(Y_1^\ve, 0), \\ 
(|\nabla\tilde U_\ve|^2+|\partial_t \tilde U_\ve|)(Y_1^\ve,0)|=1. 
\end{cases} 
\end{equation}

Note that if $r_\ve\leq 2$, then from the definition of $\sigma_\ve$ we obtain 
$$\frac{\delta^2}4\sup_{P_\frac\delta 2^+(Z_0)}(|\nabla U_\ve|^2+|\partial_t U_\ve|)\le (\delta-\sigma_\eps)^2\sup_{P_{\sigma_\eps}^+(Z_0)}
(|\nabla U_\ve|^2+|\partial_t U_\ve|)
=4\rho_\eps^2 e_\eps^2=4r_\eps^2\le 16.$$ 
This yields \eqref{small_est1}. 
Thus we may assume $r_\eps>2$ and proceed as follows.
In case that $y_1^\ve{e_\ve}\geq \frac18$,  one  can use the interior regularity of heat equations to conclude that 
\begin{align}\label{Harnack1}
1&=(|\nabla \tilde U_\ve|^2+|\partial_t \tilde U_\ve|)(Y_1^\ve,0)\nonumber\\
&\leq C \big(\|\nabla \tilde U_\ve\|_{L^2(P_\frac19 (Y_1^\ve, 0))}^2+\|\partial_t \tilde U_\ve\|_{L^1(P_\frac1{18} (Y_1^\ve, 0))}\big)
\nonumber\\
&\le C \Big((9e_\ve)^{-(m+1)}\int_{P_{\frac{1}{9e_\ve}}^+(X_1^\eps, t_1^\eps)}|\nabla U_\eps|^2
+(18e_\ve)^{-(m+1)}\int_{P_{\frac{1}{18e_\ve}}^+(X_1^\eps, t_1^\eps)}|\partial_t U_\eps|\Big).
\end{align}
 Next we need

\smallskip
\noindent{\it Claim}. {For $0<r,\sigma<\delta$ with $2r+\sigma<\delta$, it holds
\begin{align}\label{small_energy1}
&(\frac{r}2)^{2-(m+1)}\int_{P_{\frac{r}2}^+(Z_1)}|\partial_t U_\eps|^2+r^{-(m+1)}\Big(\int_{P_{r}^+(Z_1)}\frac12 |\nabla U_\eps|^2+\int_{\partial^+P_{r}^+(Z_1)}\frac{c_{\frac12}}{4\eps^2}(1-|u_\ve|^2)^2\Big)
\nonumber\\
&\le C(\eps_0^2+\eps_1E_0),  
\ \forall Z_1\in P_\sigma^+(Z_0).
\end{align}
Here $\displaystyle E_0=\int_{\mathbb R^{m+1}_+} |\nabla U_0|^2$.}

Assume \eqref{small_energy1} for the moment. Since $\frac{1}{9e_\ve}\le \frac{8y_1^\ve}{9}\le \sigma_\ve$, it follows from \eqref{small_energy1}
and \eqref{Harnack1} that 
$$1\le C(\eps_0^2+\eps_1E_0)+C\sqrt{\eps_0^2+\eps_1E_0},$$  
which is impossible if we choose a sufficiently small $\eps_0$. Therefore we must have $y_1^\ve{e_\ve}\leq\frac18$ and $r_\eps>2$.  
In this case, we see that $\tilde U_\ve$ satisfies \eqref{grad-1}, \eqref{grad-2} and \eqref{grad-3} with $\ve=\tilde\ve$. Hence, by Proposition \ref{prop-regu}, we have that for any $0<\alpha<1$,
$$\|\tilde U_\ve\|_{C^{1+\alpha}(P^+_\frac17(Y_1^\ve, 0))}\leq C(\alpha).$$ 
In particular, for any $0<r<\frac{1}{60}$ it holds that
$$\big|(|\nabla \tilde U_\ve|^2+|\partial_t\tilde U_\ve|)(X,t)-(|\nabla \tilde U_\ve|^2+|\partial_t\tilde U_\ve|)(Y_1^\ve, 0)|\le Cr^\frac14, 
\quad\forall (X,t)\in P_{r}^+(Y_1^\ve,0). $$ 
Since $(|\nabla \tilde U_\ve|^2+|\partial_t\tilde U_\ve|)(Y_1^\ve, 0)=1$, it follows that
$$(|\nabla \tilde U_\ve|^2+|\partial_t\tilde U_\ve|)(X,t)\ge 1-Cr^\frac14, 
\quad\forall (X,t)\in P_{r}^+(Y_1^\ve,0). $$ 
Thus we can find a $0<r_0<\frac{1}{60}$, independent of $\ve$,
such that 
$$(|\nabla \tilde U_\ve|^2+|\partial_t\tilde U_\ve|)(X,t)\ge \frac12,
\quad\forall (X,t)\in P_{r_0}^+(Y_1^\ve,0). $$ 
This, combined with \eqref{small_energy1},  yields that 
\begin{align*}
r_0^2 &\le Cr_0^{-(m+1)}\int_{P_{r_0}^+(Y_1^\ve, 0)}  (|\nabla \tilde U_\ve|^2+|\partial_t\tilde U_\ve|)\\
&=C(\frac{r_0}2\rho_\ve)^{-(m+1)}\int_{P_{\frac{r_0}2\rho_\ve}^+(Z_1^\ve)} (|\nabla U_\ve|^2+|\partial_t U_\ve|)\\
&\le C(\eps_0^2+\eps_1E_0)+C\sqrt{\eps_0^2+\eps_1E_0}.
\end{align*} 
This is again impossible if we choose a sufficiently small $\eps_0$. Therefore we show that $r_\ve=\rho_\ve e_\ve\le 2$ so that  \eqref{small_est1} holds. 

Now we return to the proof of \eqref{small_energy1}.  For simplicity, assume $Z_0=(0,0)$ and write 
$$\G_{X_*, t_*}(X,t)=\G^\frac12_{X_*,t_*}(X,t).$$ 
For any $Z_1=(X_1,t_1)\in P_\sigma^+(0,0)$, the monotonicity formula \eqref{mono} implies
\begin{align}\label{mono4}
&r^{-(m+1)}\left(\int_{P_r^+(Z_1)} \frac12|\nabla U_\eps|^2dXdt+\int_{\partial^+P_r^+(Z_1)} \frac{c_{\frac12}}{4\eps^2} (1-|u_\eps|^2)^2dxdt\right)\nonumber\\
&\le c\left(\int_{T_{r}^+(t_1+2r^2)} \frac12|\nabla U_\eps|^2\G_{X_1,t_1+2r^2} dXdt+\int_{\partial^+T_{r}^+(t_1+2r^2)} \frac{c_{\frac12}}{4\eps^2} (1-|u_\eps|^2)^2\G_{X_1,t_1+2r^2}dxdt\right)\nonumber\\
&\le c\left(\int_{T_{\frac12}^+(t_1+2r^2)} \frac12|\nabla U_\eps|^2\G_{X_1,t_1+2r^2} dXdt+\int_{\partial^+T_{\frac12}^+(t_1+2r^2)} \frac{c_{\frac12}}{4\eps^2} (1-|u_\eps|^2)^2\G_{X_1,t_1+2r^2}dxdt\right)\nonumber\\
&=c\mathcal{E}(U_\eps, (X_1, t_1+2r^2), \frac12)\nonumber\\
&\le C\left(\mathcal{E}(U_\eps, (0,0), 1)+\eps_1E_0\right)\le C(\eps_0^2+\eps_1 E_0),
\end{align}
where we have used  \eqref{mono2} from Lemma \ref{lem-conti} below, 
since $(X_1, t_1+2r^2)\in P_\delta^+(0,0)$. 
Now one can see that \eqref{small_energy1} follows from \eqref{mono4} and \eqref{local_energy_ineq3}.
\end{proof}

\begin{lemma}
\label{lem-conti}Let $\ve_1>0$ be given. Then there exists $\delta=\delta(\ve_1)>0$ such that for every $(X_0,t_0)\in \partial\R^{m+1}_+\times (4,\infty)$, we have 
\begin{align}  \label{mono2}
 \E(U_\ve, (X_1,t_1),\frac12)\leq C(\E(U_\ve, (X_0,t_0),1)+\ve_1 E_0)\quad\text{for every }(X_1,t_1)\in P_\delta^+(X_0,t_0),
 \end{align}
 where $C>0$ is independent of $\eps, \delta$ and $\eps_1$, and $\displaystyle E_0=\int_{\R^{m+1}_+}|\nabla U_0|^2$.
 \end{lemma}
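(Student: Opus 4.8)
The plan is to run the classical ``continuity of the density'' argument: compare the Gaussian‑weighted backward energy with vertex at $(X_1,t_1)$ to the one with vertex at $(X_0,t_0)$, using that the two backward kernels are comparable away from their vertices, and pushing the far‑field tail onto the uniform a priori bound \eqref{eq:energy_est2}. A convenient first step is the elementary identity, obtained from the substitution $t=t_0-\rho^2$ in the time integral defining $\E$,
\begin{equation*}
\E(U_\eps,Z_0,R)=2\int_R^{2R}\rho^{-1}\,\mathcal D(U_\eps,Z_0,\rho)\,\d\rho,\qquad 0<R<\tfrac12\sqrt{t_0},
\end{equation*}
which, together with the two monotonicities of Lemma~\ref{lem-mono} (all scales below are $<\tfrac12\sqrt{t_0}$ and $<\tfrac12\sqrt{t_1}$ since $t_0>4$ and $\delta$ is small), reduces the statement to a pointwise comparison of time‑slice densities: it suffices to show that, for $\rho$ in a fixed interval around $\tfrac12$ and $\delta=\delta(\eps_1)$ small,
\begin{equation*}
\mathcal D(U_\eps,(X_1,t_1),\rho)\le C\,\mathcal D(U_\eps,(X_0,t_0),1)+\eps_1 E_0,
\end{equation*}
after which one integrates in $\rho$ and uses $\mathcal D(U_\eps,(X_0,t_0),1)\le C\,\E(U_\eps,(X_0,t_0),1)$ (again from the identity and monotonicity of $\mathcal D$).

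For the comparison of densities I would fix $\rho\in[\tfrac12,\sqrt{1-\delta^2}]$ and set $t_\ast=t_1-\rho^2$, so that $t_0-t_\ast=\rho^2+(t_0-t_1)=:\rho'^{\,2}$ with $|\rho'^{\,2}-\rho^2|\le\delta^2$, $\rho'\le1$, and $t_0-t_\ast$, $t_1-t_\ast$ both trapped between two fixed positive constants. Then split the integrals in $\mathcal D(U_\eps,(X_1,t_1),\rho)$ into $\{|X-X_0|\le A\}$ and $\{|X-X_0|>A\}$. On the near region a direct estimate of the ratio of the two explicit Gaussians gives $\G^{\frac12}_{X_1,t_1}(\cdot,t_\ast)\le\bigl(1+\omega(\delta,A)\bigr)\G^{\frac12}_{X_0,t_0}(\cdot,t_\ast)$ with $\omega(\delta,A)\to0$ as $\delta\to0$ for fixed $A$, whence the near part is $\le C\,\mathcal D(U_\eps,(X_0,t_0),\rho')\le C\,\mathcal D(U_\eps,(X_0,t_0),1)$. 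On the far region $\G^{\frac12}_{X_1,t_1}(\cdot,t_\ast)\le Ce^{-A^2/C}$ pointwise (because $t_1-t_\ast\in[\tfrac14,1]$), so the far part is at most $Ce^{-A^2/C}$ times the total energy on the slice $t=t_\ast$, which by \eqref{eq:energy_est2} is $\le Ce^{-A^2/C}E_0\le\eps_1E_0$ once $A=A(\eps_1)$ is chosen large. Finally the sliver $\rho\in[\sqrt{1-\delta^2},1]$ --- where $\rho'\le1$ may fail if $t_1<t_0$ --- has length $O(\delta^2)$, and there $\mathcal D(U_\eps,(X_1,t_1),\rho)\le\mathcal D(U_\eps,(X_1,t_1),1)\le CE_0$ by monotonicity together with the crude pointwise bound $\G^{\frac12}_{X_1,t_1}(\cdot,t_1-1)\le C$ and \eqref{eq:energy_est2}, so it also contributes $O(\delta^2)E_0\le\eps_1E_0$ for $\delta$ small. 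Collecting the three contributions and noting that $A$, hence $\delta$, depend only on $\eps_1$ while all remaining constants are universal gives the lemma.

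The main obstacle is the bookkeeping at the vertices rather than any single inequality: the backward paraboloids based at $(X_0,t_0)$ and at $(X_1,t_1)$ all carry the same time‑width ratio, so neither contains the other and one cannot simply include the region of $\E(U_\eps,(X_1,t_1),\tfrac12)$ into that of $\E(U_\eps,(X_0,t_0),1)$; passing to $\mathcal D$‑slices is precisely what dissolves this rigidity, at the price of having to control the slice at times near $t_0$ and the part where $|X-X_0|$ is large --- and it is exactly this last far‑field piece, absorbed via Gaussian decay into \eqref{eq:energy_est2}, that produces the harmless error $\eps_1E_0$. The only other thing to watch is the $\eps$‑uniformity: all $\eps$‑dependence enters solely through \eqref{eq:energy_est2} and the kernel‑comparison constants are universal, so the final $C$ is indeed independent of $\eps$, $\delta$ and $\eps_1$.
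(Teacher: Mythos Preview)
Your argument is correct and is essentially the same near/far Gaussian-kernel comparison as the paper's sketch (which in turn follows Chen--Struwe): bound $\G_{X_1,t_1}$ by $C\G_{X_0,t_0}$ on a large ball and by a small constant outside, then absorb the far tail into the global energy estimate \eqref{eq:energy_est2}. Your passage through the slice densities $\mathcal D$ and the identity $\E(Z_0,R)=2\int_R^{2R}\rho^{-1}\mathcal D(Z_0,\rho)\,\d\rho$ is a clean way to justify the step the paper writes only in shorthand---namely that the near-field integral over $T^+_{1/2}(t_1)$ with weight $\G_{X_0,t_0}$ is controlled by $\E(U_\eps,(X_0,t_0),1)$ even though $T^+_{1/2}(t_1)\not\subset T^+_1(t_0)$---but the underlying strategy is the same.
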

\begin{proof} 
Proof of this Lemma is essentially contained in \cite{chen-struwe} Lemma 2.4. Here we give a sketch of it. 
For $(X_1,t_1)\in P_\delta^+(X_0, t_0)$,   we see that 
\begin{align*} 
G_{X_1,t_1 }(X,t)&\leq \big(\frac{|t-t_0|}{|t-t_1|}\big)^{\frac{m+1}2}
\exp({\frac{|X-X_0|^2}{4|t-t_0|}-\frac{|X-X_1|^2}{4|t-t_1|}}) G_{X_0,t_0}(X,t)\\
&\le C \exp({\frac{|X-X_0|^2}{4|t-t_0|}-\frac{|X-X_1|^2}{4|t-t_1|}}) G_{X_0,t_0}(X,t)\\
&\le C\exp({c\delta^2 \frac{|X-X_0|^2}{4|t-t_0|}}) G_{X_0,t_0}(X,t)\\
&\le \begin{cases} CG_{X_0,t_0}(X,t), & |X-X_0|\le \delta^{-1}\\
C\exp({-c\delta^{-2}}), & |X-X_0|> \delta^{-1}
\end{cases}
\end{align*}
 for any $(X,t)\in T_\frac12^+(t_1)$, where $C, c>0$ are independent of $\delta$ and $\eps$.
 
 Therefore,   for a given $\ve_1>0$ small one can find $\delta>0$  small  so that  
 for  $(X,t)\in T_\frac12^+(t_1)$,  
 \begin{align*}
 G_{X_1,t_1 }(X,t)\leq C\left\{\begin{array}{ll} G_{X_0,t_0}(X,t) &\quad\text{if }  |X-X_0|\leq \delta^{-1},\\ \ve_1&\quad\text{if }|X-X_0|\geq\delta^{-1}.  
 \end{array} \right.   \end{align*}  
 Hence 
 \begin{align*}
&\E(U_\ve,(X_1,t_1), \frac12) \\
 &\leq C\E(U_\ve, (X_0,t_0), 1)+C\ve_1\left( \int_{t_1-1}^{t_1}\int_{\R^{m+1}_+ } |\nabla \tilde U_\ve|^2dXdt+\int_{t_1-1}^{t_1}\int_{\partial\R^{m+1}_+} (1-|\tilde u_\ve|^2)^2dxdt\right) \\
 &\leq C(\ve_0+\ve_1 E_0),  
 \end{align*} 
 thanks to \eqref{eq:energy_est}.  
 \end{proof}

 \section{Boundary $C^{1+\alpha}$-estimate}

 This section is devoted to establishing an uniform boundary $C^{1+\alpha}$-estimates for solutions $U_\eps$ to \eqref{eq:approx_ext_+},
 under the smallness condition of the renormalized energy of $U_\eps$.
 
 Notations: for $r>0$, set 
 $$
 P_r:=B_r\times (-r^2,0], \  P_r^+:=P_r\cap (\R^{m+1}_+\times\R),$$
 and
 $$
 \Gamma_r:=\big\{(x,0,t):|x|<r, \, -r^2<t\leq 0\big\},\  
 \Gamma_r^+:=\big\{(x,y,t):|x|^2+y^2=r^2, \,y>0,\, -r^2<t\leq 0\big\}.
 $$  
 

 Let $\{U_\ve\}_{\ve>0}:P_1^+\to \mathbb R^l$ be a family of solutions to 
 \begin{align}\label{grad-1} \partial_t U_\ve-\Delta U_\ve=0\quad\text{in }\ P_1^+,    \end{align} 
 subject to the Neumann boundary condition 
 \begin{align} \label{grad-2} 
\frac{\partial U_\ve}{\partial\nu} = \frac{1}{\ve^2} (1-|u_\ve|^2)u_\ve\quad\text{on }\ \Gamma_1.
 \end{align}
 Then we have
 \begin{prop} \label{prop-regu}Let $\{U_\ve\}$ be a family of solutions to \eqref{grad-1}-\eqref{grad-2}. Assume that \begin{align}\label{grad-3} 
 \frac{1}{2}\leq |U_\ve|\leq 1,\quad |\partial_t U_\ve|+|\nabla U_\ve|\leq 4\quad\text{in }\ P_1^+. \end{align}
 Then   $\|U_\ve\|_{C^{1+\alpha}(P^+_\frac14)}\leq C(\alpha)$ for every $0<\alpha<1$ and $\ve>0$.  
 \end{prop}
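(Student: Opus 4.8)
\emph{Strategy.} Away from the flat face $\Gamma_1$ the estimate is classical: every component of $\nabla U_\ve$ and of $\partial_t U_\ve$ solves the heat equation and is bounded by \eqref{grad-3}, so interior parabolic estimates (e.g. \cite[p.~61]{Evans}) give $\|U_\ve\|_{C^{k}(\{y\ge\tau\}\cap P_{1/2}^+)}\le C(k,\tau)$ for every $k$, and everything reduces to a boundary estimate near $\Gamma_1$. The first point is that, although $\ve$ enters \eqref{grad-2} through the factor $\ve^{-2}$, the Ginzburg--Landau potential is \emph{a priori bounded on} $\Gamma_1$: since $\tfrac{\partial U_\ve}{\partial\nu}=-\partial_y U_\ve|_{y=0}$, \eqref{grad-2} and \eqref{grad-3} give
\[
\frac{1-|u_\ve|^2}{\ve^2}=\frac{|\tfrac{\partial U_\ve}{\partial\nu}|}{|u_\ve|}\le 2|\nabla U_\ve|\le 8\qquad\text{on }\Gamma_1,
\]
hence also $\bigl|\,|u_\ve|-1\,\bigr|\le C\ve^2$ on $\Gamma_1$: on the boundary the nonlinearity is a bounded Neumann datum, and the constraint $|u_\ve|\approx1$ holds quantitatively.

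\emph{Polar decomposition.} Since $\tfrac12\le|U_\ve|\le1$ we write $U_\ve=\rho_\ve\omega_\ve$ with $\rho_\ve:=|U_\ve|\in[\tfrac12,1]$ and $\omega_\ve:=U_\ve/|U_\ve|:P_1^+\to\S^{\ell-1}$. A direct computation from \eqref{grad-1} yields
\[
\partial_t\rho_\ve-\Delta\rho_\ve=-\rho_\ve|\nabla\omega_\ve|^2,\qquad
\partial_t\omega_\ve-\Delta\omega_\ve=\tfrac{2}{\rho_\ve}\nabla\rho_\ve\!\cdot\!\nabla\omega_\ve+|\nabla\omega_\ve|^2\omega_\ve,
\]
and, because the force in \eqref{grad-2} is \emph{radial}, the boundary condition splits into $\tfrac{\partial\rho_\ve}{\partial\nu}=\ve^{-2}(1-\rho_\ve^2)\rho_\ve$ and, crucially, the \emph{homogeneous} Neumann condition $\tfrac{\partial\omega_\ve}{\partial\nu}=0$ on $\Gamma_1$. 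By \eqref{grad-3}, $|\nabla\rho_\ve|\le4$ and $|\nabla\omega_\ve|\le|\nabla U_\ve|/\rho_\ve\le8$, so the right-hand side of the $\omega_\ve$-equation is bounded; treating it as an $L^\infty$ forcing and using (componentwise) parabolic $L^p$ theory for the heat equation with homogeneous Neumann data (say, via even reflection across $\{y=0\}$), one gets $\omega_\ve\in W^{2,1}_p(P_{3/8}^+)$ uniformly in $\ve$ for all $p<\infty$, hence $\|\omega_\ve\|_{C^{1+\alpha}(P_{1/4}^+)}\le C(\alpha)$ for every $\alpha\in(0,1)$, uniformly in $\ve$.

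\emph{Radial part and conclusion.} It remains to bound $\|\rho_\ve\|_{C^{1+\alpha}(P_{1/4}^+)}$, for then $U_\ve=\rho_\ve\omega_\ve\in C^{1+\alpha}$. The scalar $\rho_\ve$ solves a heat equation whose forcing $-\rho_\ve|\nabla\omega_\ve|^2$ is, by the previous step, uniformly bounded in $C^\alpha$, with nonlinear Neumann condition $\tfrac{\partial\rho_\ve}{\partial\nu}=\ve^{-2}(1-\rho_\ve^2)\rho_\ve$ whose right-hand side is a priori $\le4$ and with $\rho_\ve|_{\Gamma_1}=1+O(\ve^2)$. The antiderivative $V^\rho_\ve(x,y,t):=\int_0^y\rho_\ve(x,s,t)\,ds$ satisfies a heat equation with bounded right-hand side and \emph{zero} Dirichlet data $V^\rho_\ve|_{y=0}=0$ (exactly as in Lemma~\ref{clearing-out}), which already produces uniform H\"older control; to upgrade this to the $C^{1+\alpha}$ bound one rescales around an arbitrary point of $\Gamma_1$ at the natural Ginzburg--Landau length $\ve^2$: the rescaled function solves an $\ve$-independent heat equation with an $\ve$-independent nonlinear Neumann condition and small gradient, and the strong absorption in that condition (the Robin-type boundary term $-\tfrac{(1+\rho_\ve)\rho_\ve}{\ve^2}(1-\rho_\ve)$, with coefficient bounded below) forces the rescaled profile to be an $O(\ve^2)$-perturbation of an affine one, so that Schauder estimates for the rescaled problem, after scaling back, give the required uniform bound; combined with the interior estimate this gives $\|U_\ve\|_{C^{1+\alpha}(P_{1/4}^+)}\le C(\alpha)$. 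The delicate step is precisely this last boundary analysis of $\rho_\ve$: the factor $\ve^{-2}$ in its Neumann condition is harmless only because the gradient bound pins $\rho_\ve$ to within $O(\ve^2)$ of $1$ on $\Gamma_1$ --- effectively turning the stiff Robin condition into a Dirichlet condition as $\ve\to0$ --- and making this uniform in $\ve$ requires carefully exploiting the sign and the strength of the absorbing term at the scale $\ve^2$.
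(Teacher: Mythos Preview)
Your setup and the treatment of the angular part $\omega_\ve$ coincide with the paper's: polar decomposition, homogeneous Neumann condition for $\omega_\ve$, and $W^{2,1}_p$/Schauder theory giving $\|\omega_\ve\|_{C^{1+\alpha}(P_{7/8}^+)}\le C(\alpha)$ uniformly in $\ve$. The observation that $0\le 1-\rho_\ve\le C\ve^2$ on $\Gamma_1$ is also used in the paper.

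The gap is in your boundary analysis of $\rho_\ve$. Rescaling at length $\ve^2$ does produce an $\ve$-independent problem with gradient of order $\ve^2$, but scaling \emph{back} destroys the estimate: if $\tilde\rho(X,t)=\rho_\ve(X_0+\ve^2X,t_0+\ve^4t)$ and you prove $[\nabla\tilde\rho]_{C^\alpha(P_1^+)}\le C$, then $[\nabla\rho_\ve]_{C^\alpha(P_{\ve^2}^+(X_0,t_0))}=\ve^{-2(1+\alpha)}[\nabla\tilde\rho]_{C^\alpha(P_1^+)}$, which blows up. To make the rescaling close you would need $[\nabla\tilde\rho]_{C^\alpha}\le C\ve^{2(1+\alpha)}$, and ``$O(\ve^2)$-perturbation of an affine function'' only gives $C\ve^2$. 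The antiderivative $V^\rho_\ve=\int_0^y\rho_\ve$ does not help either: $C^{2+\alpha}$ control of $V^\rho_\ve$ would require $C^\alpha$ control of its right-hand side, which on $\{y=0\}$ is exactly the quantity $\ve^{-2}(1-\rho_\ve^2)\rho_\ve$ whose H\"older regularity you are trying to prove. So the argument is circular at this point.

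What the paper does instead is to prove directly that $\ve^{-2}(1-\rho_\ve)$ is $C^\alpha$ on $\Gamma_{7/24}$, via a finite-difference/barrier argument. For $h_\ve:=1-\rho_\ve$ and a tangential increment $Y$, the difference $\h_Y^\ve(X,t):=h_\ve(X+Y,t)-h_\ve(X,t)$ satisfies a heat equation with $C^\alpha$ forcing of size $|Y|^\alpha$ and a Robin condition $\frac{\ve^2}{\rho_\ve(1+\rho_\ve)}\partial_\nu\h_Y^\ve+\h_Y^\ve=\mathbf g_Y^\ve$ with $|\mathbf g_Y^\ve|\le C\ve^2|Y|^\alpha$. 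One then builds an auxiliary super\-solution $\widehat\h_Y^\ve$ solving the heat equation with the \emph{linear, constant-coefficient} Robin condition $\tfrac{4\ve^2}{3}\partial_\nu\widehat\h_Y^\ve+\widehat\h_Y^\ve=0$, chosen to dominate $\h_Y^\ve$ initially and laterally. The key technical input is a \emph{uniform-in-$\ve$} gradient estimate $\|\widehat\h_Y^\ve\|_{C^1}\le C|Y|^\alpha$ for this auxiliary Robin problem, which the paper proves separately (Appendix~A) via the explicit Green function for the heat equation with oblique boundary condition. This gradient bound, combined with the Robin condition itself, forces $0\le\widehat\h_Y^\ve\le C\ve^2|Y|^\alpha$ on $\Gamma_{7/24}$; the maximum principle applied to $\widehat\h_Y^\ve\pm\h_Y^\ve+\|\mathbf g_Y^\ve\|_{L^\infty}$ then gives $|\h_Y^\ve|\le C\ve^2|Y|^\alpha$ on the boundary, i.e.\ $\ve^{-2}|\rho_\ve(\cdot+Y,t)-\rho_\ve(\cdot,t)|\le C|Y|^\alpha$, and similarly in time. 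This is exactly the $C^\alpha$ bound on $\partial_\nu\rho_\ve$ that closes the Schauder estimate for $\rho_\ve$. Your sketch does not supply a substitute for this step.
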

\begin{proof} 
 Write $U_\ve$ in the polar form. Namely, $U_\ve=\rho_\ve\omega_\ve$,
 with $\rho_\ve=|U_\ve|$ and $\omega_\ve=\frac{U_\ve}{\rho_\ve}$. Then $\rho_\ve$ and $\omega_\ve$
 solve
\begin{align} 
\label{grad-4}
\left\{\begin{array}{ll}
\displaystyle\partial_t \rho_\ve-\Delta \rho_\ve=-|\nabla \omega_\ve|^2 \rho_\ve &\quad\text{in }\ \ P_1^+  \\ \displaystyle
\frac{\partial  \rho_\ve}{\partial\nu}=\frac{c_{\frac12}}{\ve^2}(1-|\rho_\ve|^2)\rho_\ve&\quad\text{on }\ \ \Gamma_1 ,
\end{array}\right. \end{align} 
and 
\begin{align} \label{grad-5}
\left\{\begin{array}{ll}
\displaystyle\partial_t \omega_\ve-\Delta\omega_\ve=f_\ve:=
2\frac{\nabla\rho_\ve}{\rho_\ve} \cdot\nabla \omega_\ve+
|\nabla\omega_\ve|^2 \omega_\ve&\quad\text{in }\ \ P_1^+\\ 
\displaystyle\frac{\partial\omega_\ve}{\partial\nu}=0&\quad\text{on }\ \  \Gamma_1 .\end{array}\right. \end{align}  
It follows from \eqref{grad-3} that 
\begin{equation}\label{grad-3'}
\frac12\le\rho_\ve\le 1,\  |\nabla\rho_\ve|\le |\nabla U_\ve|\le 8,
\ {\rm{\and}}\ \big|\nabla\omega_\ve|\le 2\frac{|\nabla U_\ve |}{|U_\ve|}\le 16, 
\ \ {\rm{in}}\ \  P_1^+,
\end{equation}
and hence 
$$|f_\ve|\le 1000 \ \ \ {\rm{in}}\ \ \  P_1^+.$$
From \eqref{grad-5},  we can apply the regularity theory for parabolic equations to conclude that
\begin{align}\label{grad-6}  \displaystyle\big\|\omega_\ve\big\|_{C^{1+\alpha}(P_{\frac78}^+)}\leq C(\alpha) \quad\text{for every }\alpha\in (0,1), \end{align} 
uniformly with respect to $\ve$. 

 Next we show that $\rho_\ve$ is uniformly bounded in $C^{1+\alpha}(P_{\frac14}^+)$. 
 From \eqref{grad-4}, it suffices to prove $C^{\alpha}$-regularity of $\partial_\nu \rho_\ve$ on $\Gamma_\frac12$.     
 To this end, we set $$h_\ve(X,t)=1-\rho_\ve(X,t), \ \ (X,t)\in P_1^+.$$ 
 The boundary condition in \eqref{grad-4},  and   \eqref{grad-3} imply that 
 \begin{align}\label{grad-8} 
 0\leq h_\ve\leq c\eps^2 \quad\text{on }\ \Gamma_1.    
 \end{align} 
 For any fixed $(Y,0)\in \Gamma_{\frac{7}{16}}$,  set  
 $$\h_Y^\ve(X,t):=h_\ve(X+Y,t)-h_\ve(X,t) \quad\text{for }  (X,t)\in P_{\frac{7}{16}}^+.$$ 
 By direct calculations, $\h_Y^\ve$ satisfies
 \begin{align}   
 \label{grad-} 
 \left\{\begin{array}{ll} (\partial_t-\Delta)\h_Y^\ve=\mathbf{f}_Y^\ve &\quad\text{in }\ P_{\frac{7}{16}}^+,\\  
 \displaystyle\frac{\ve^2}{\rho_\ve(1+\rho_\ve)}\partial_\nu\h_Y^\ve+\h_Y^\ve=  \mathbf{g}_Y^\ve
 &\quad\text{on }\ \Gamma_{\frac{7}{16}},
 \end{array}\right.
 \end{align} 
 where 
 $$\mathbf{f}_Y^\ve(X,t)=|\nabla \omega_\ve(X+Y,t)|^2\rho_\ve(X+Y,t)-|\nabla \omega_\ve(X,t)|^2\rho_\ve(X,t),$$
 and  
 $$\mathbf{g}_Y^\ve(X,t)=  \left( 1-\frac{\rho_\ve(X+Y,t)(1+\rho_\ve(X+Y,t))}{\rho_\ve (X,t)(1+\rho_\ve(X,t))} \right) h_\ve(X+Y,t) .$$ 
 From the estimates  \eqref{grad-3}, \eqref{grad-6} and \eqref{grad-8} we infer  
 \begin{align}\label{est-f} 
 |\mathbf{f}_Y^\ve(X,t)|\leq C|Y| ^\alpha\quad\text{in }\ \ P_{\frac{7}{16}}^+, 
 \end{align}   
 and
 \begin{align}\label{est-g} |\mathbf{g}_Y^\ve(X,t)|\leq C\ve^2|Y|^\alpha\quad\text{on }\ \ \Gamma_{\frac{7}{16}}.  
 \end{align}
 From \eqref{grad-3'} we also have
 \begin{equation}\label{est-hY}
 |\mathbf{h}_Y^\ve(X,t)|\le C|Y|^\alpha, \quad\text{in }\ \ P_{\frac{7}{16}}^+.
 \end{equation}
 
 Denote by
 $$m_Y^\ve=\big\|\mathbf f_Y^\ve\big\|_{L^\infty(P_{\frac{7}{16}}^+)},
 \  n_Y^\ve=\big\|\mathbf g_Y^\ve\big\|_{L^\infty(P_{\frac{7}{16}}^+)},
 \ p_Y^\ve =\big\|\mathbf{h}_Y^\ve\big\|_{L^\infty(P_{\frac{7}{16}}^+)}.
 $$
 Now we need
 
\medskip
\noindent{\bf{Claim}}. {\it There exists a function $\phi_Y^\ve \in C^\infty(\overline{B_{\frac13}^+})$
satisfying
\begin{equation}\label{initial-asatz}
\begin{cases}
\h_Y^\ve\le \phi_Y^\ve\le 10 p_Y^\ve\le C|Y|^\alpha & \ {\rm{in}\ } B_{\frac13}^+,\\
\displaystyle\frac{4\ve^2}{3} \partial_\nu \phi_Y^\ve+\phi_Y^\ve=0 & \ {\rm{on}}\ \partial B_{\frac13}^+
\cap\big\{x\in\mathbb R^{m+1}_+: x_{m+1}=0\big\}.
\end{cases}
\end{equation}}

To verify this claim, first choose a $\frac13<r_0<\frac{7}{16}$ such that
$B_{\frac13}^+\subset B_{r_0}^m\times [0, r_0]$. Next define
$\phi_Y^\ve(x)=\psi_Y^\ve(x_{m+1}): B_{r_0}^m\times [0,r_0]\to\mathbb R$,
where $\psi_Y^\ve\in C^\infty([0,r_0])$ satisfies 
$$ \psi_Y^\ve(\ve^2)=10 p_Y^\ve; \ 10 p_Y^\ve\le \psi_Y^\ve(t)\le 20 p_Y^\ve, \ \ve^2< t\le r_0,$$
and 
\begin{equation}\label{psi}
\frac{4\ve^2}{3}(\psi_Y^\ve)'(t)+\psi_Y^\ve(t)=0, \ 0\le t\le \ve^2.
\end{equation} 
Solving \eqref{psi} yields
$$
\psi_Y^\ve(t)=10 e^{\frac34(1-\frac{t}{\ve^2})} p_Y^\ve,  \ 0\le t\le \ve^2.
$$ 
It is clear that \eqref{initial-asatz} holds by restricting $\phi_Y^\ve$ on $B_{\frac13}^+$.
 
   
Finally we let $\widehat{\h}_Y^\ve: B_\frac13^+\to \mathbb R$ be the unique  solution of
the initial and boundary value problem:
\begin{align}\label{auxi}
\left\{\begin{array}{ll}  (\partial_t-\Delta)\widehat{\h}_Y^\ve=m_Y^\ve &\quad\text{in }P_{\frac13}^+,\\ 
\widehat{\h}_Y^\ve = \phi_Y^\ve &\quad\text{on } B_{\frac13}^+\times \{-(\frac13)^2\},\\
\widehat{\h}_Y^\ve=\phi_Y^\ve &\quad\text{on }\Gamma_{\frac13}^+,\\ 
\displaystyle\frac{4\ve^2}3\partial_\nu \widehat{\h}_Y^\ve+\widehat{\h}_Y^\ve=0 
&\quad\text{on }\Gamma_\frac13.  
\end{array}\right.
\end{align} 
For $\widehat{\h}_Y^\ve$, it follows from both the maximum principle and the $C^0$-estimate
(see \cite{Lieberman}) that 
\begin{equation}\label{mp1}
0\le \widehat{\h}_Y^\ve\le C\big(m_Y^\ve+\|\phi_Y^\ve\|_{L^\infty(B_\frac13^+)}\big)\le C|Y|^\alpha
\ \ {\rm{in}}\ \ P_{\frac13}^+.
\end{equation}
Furthermore,  we have the following uniform gradient estimate, whose proof will be given in the Appendix A.
\begin{align} \label{C1-est}
\big\|\widehat{\h}_Y^\ve\big\|_{C^1(P_{\frac{7}{24}}^+)}\leq C\big(m_Y^\ve+\|\phi_Y^\ve\|_{L^\infty(B_\frac13^+)}\big)\le C|Y|^\alpha, \quad\text{for every }\ve>0.  
\end{align} 
This, combined with the boundary condition on $\Gamma_\frac13$  in \eqref{auxi}, 
implies that there exists a constant $C>0$, independent of $\ve$, such that
\begin{align} \label{est-barh} 
0\leq\widehat{\h}_Y^\ve\leq C\ve^2|Y|^\alpha
\quad\text{on }\Gamma_{\frac {7}{24}}.
\end{align}

Define functions $H_{Y,\ve}^+$ and $H_{Y,\ve}^-$ on $P_\frac13^+$ 
by letting 
$$H^\pm_{Y, \ve}(X,t)=\widehat{\h}_Y^\ve(X,t)\pm \h_Y^\ve(X,t)+n_Y^\ve
\quad\text{in }\ P_{\frac13}^+.$$ 
Then it is easy to verify that 
\begin{align}  \label{auxi1}
\left\{  \begin{array}{ll} 
(\partial_t-\Delta)H^\pm_{Y, \ve}\geq0&\quad \text{in }\ P_{\frac13}^+,\\ 
H^\pm_{Y,\ve}\ge 0 & \quad\text{on }\ B_{\frac13}^+\times \{-(\frac13)^2\},\\
H_{Y, \ve}^\pm\geq0&\quad\text{on }\ \Gamma_{\frac13}^+,\\ 
\displaystyle\frac{4\ve^2}{3}\partial_\nu H_{Y,\ve}^\pm+H_{Y,\ve}^\pm\geq0 
&\quad\text{on }\ \Gamma_\frac13. 
\end{array}\right.
\end{align} 
Applying the maximum principle to \eqref{auxi1} (see \cite{Lieberman}), we conclude that
$$H^\pm_{Y, \ve}\geq0\ \ {\rm{in}}\ \ P_{\frac13}^+,$$
or, equivalently,
$$-\widehat{\h}_Y^\ve(X,t)-n_Y^\ve\le \h_Y^\ve(X,t)\le 
\widehat{\h}_Y^\ve(X,t)+n_Y^\ve, \ (X,t)\in P_{\frac13}^+.$$
Hence we obtain that 
$$|\h_Y^\ve(X,t)|\leq \widehat{\h}_Y^\ve(X,t)+n_Y^\ve \le C\ve^2|Y|^\alpha,
\ (X,t)\in \Gamma_{\frac {7}{24}},$$ 
thanks to \eqref{est-f}, \eqref{est-g} and \eqref{est-barh}. 
In other words, we have  
\begin{align}\label{est-Holder-1} 
\frac{1}{\ve^2}|\rho_\ve(X+Y,t)-\rho_\ve(X,t)|\leq C|Y|^\alpha
\quad\text{for }(X,t)\in \Gamma_{\frac {7}{24}},\, |Y|\leq \frac{7}{16}. 
\end{align}

\medskip   
For every fixed $T\in (-\frac{7}{16},0]$,  set
$$\h_T^\ve(X,t)=h_\ve(X,t+T)-h_\ve(X,t),\  (X, t)\in P_{\frac{7}{16}}^+.$$ 
Then  by an argument similar to that for \eqref{est-Holder-1} we can show that 
\begin{align}\label{est-Holder-2} 
\frac{1}{\ve^2}|\rho_\ve(X,t+T)-\rho_\ve(X,t)|\leq C(r)|T|^\frac\alpha2\quad\text{for }(X,t)\in 
\Gamma_{\frac{7}{24}} ,\, -\frac {7}{16}\leq T\leq 0. 
\end{align} 
Combining \eqref{est-Holder-1} and \eqref{est-Holder-2}, and applying
the boundary condition of $\rho_\ve$ in equation \eqref{grad-4},
we conclude that 
$$\big\|\partial_\nu \rho_\ve\big\|_{C^\alpha(\Gamma_\frac{7}{24})}\leq C$$
holds uniformly with respect to $\ve$.

Now Proposition \ref{prop-regu} follows immediately from the standard parabolic regularity theory 
for equation \eqref{grad-4} (see \cite{Lieberman}). \end{proof}
   
\section{Passing to the limit and partial regularity}

This section is devoted to the proof of our main theorem on the existence of partially smooth solutions
of the heat flow of $1/2$-harmonic maps. 

\medskip
\noindent{\bf Completion of proof of Theorem \ref{main}}:
\begin{proof}
For $s=\frac12$, let $\{U_\eps\}_{\eps>0}$ be a family of solutions of \eqref{eq:approx_ext_+}, satisfying
the bound \eqref{eq:energy_est2}, and $U:\R^{m+1}_+\times [0,\infty)\to\R^l$ be the weak limit of $U_\eps$ as
$\eps\to 0$.  It it readily seen that  $U\in C^\infty(\R^{m+1}_+\times (0,\infty))$ solves
\begin{equation}\label{limit_eqn}
\partial_t U-\Delta U=0\ \ {\rm{in}}\ \R^{m+1}_+\times (0,\infty); \  \ U\big|_{t=0}=U_0 \ \ {\rm{on}}\  \R^{m+1}_+,
\end{equation}
and $u:=U\big|_{\partial\R^{m+1}_+\times (0,\infty)}$ is a weak solution
of the equation  of $1/2$-harmonic map heat flow:
$$
(\partial_t-\Delta)^{\frac12}u\perp T_u \mathbb S^{l-1} \ \ {\rm{on}}\ \ \R^m\times (0,\infty); \ \ u\big|_{t=0}=u_0 \ \ {\rm{on}}\ \ \R^m.
$$
We are left with showing $u$ enjoys the partial regularity as stated in Theorem \ref{main}. 
To show this, let $\eps_0>0$ be the constant determined by Lemma \ref{eps_gradient_est} and define the singular set $\Sigma\subset\partial\R^{m+1}_+\times (0,\infty)$ by
\begin{equation}\label{sing_set}
\Sigma=\bigcap_{R>0}\Big\{Z_0=(X_0, t_0)\in \partial\R^{m+1}_+\times (0,\infty): \liminf_{\eps\to 0} \mathcal{E}(U_\eps, Z_0, R)\ge \eps_0^2\Big\}.
\end{equation}
It is well-known that the monotonicity inequality \eqref{mono} implies that $\Sigma$ is a closed set in $\partial\R^{m+1}_+\times (0,\infty)$.
Furthermore, similar to the proof of Lemma \ref{eps_gradient_est} and Lemma \ref{lem-conti},
we have that for any $Z_0=(X_0,t_0)\in\Sigma$, there exists a $0<r_0<\sqrt{t_0}$ such that for all $0<r<r_0$,
$$
r^{-m}\Big(\int_{P_r^+(Z_0)} |\nabla U_\eps|^2+\int_{\partial^+P_r^+(Z_0)} \frac{1}{\eps^2}(1-|u_\eps|^2)^2\Big)\ge c\eps_0^2.
$$
Now we can apply Vitali's covering Lemma  to show that for any compact set $K\subset  \overline{\R^{m+1}_+}\times (0,\infty)$,
the $m$-dimensional Hausdorff measure of $\Sigma\cap K$ is finite, i.e.,
$$
\mathcal{P}^m(\Sigma\cap K)\le C(E_0, K)<\infty.
$$
It follows from the definition of $\Sigma$ that for any $Z_1=(X_1, t_1)\in\partial\R^{m+1}_+\times (0,\infty)\setminus \Sigma$, we can find a radius $0<R_1<\frac{\sqrt{t_1}}{2}$ so that
$$
\liminf_{\eps\to 0} \mathcal{E}(U_\eps, Z_1, R_1)< \eps_0^2.
$$
Hence by Lemma Lemma \ref{eps_gradient_est} and Lemma \ref{lem-conti} we can conclude that there exists a $\delta_1>0$, independent of $\eps$,
such that for any $\alpha\in (0,1)$,
\begin{equation}
\big\|U_\eps\big\|_{C^{1+\alpha}(P_{2\delta_1 R_1}^+(Z_1))}\le C(\eps_0, \alpha).
\end{equation}
Thus $U_\eps\rightarrow U$ in $C^{1+\alpha}(P_{\delta_1 R_1}^+(Z_1))$. In particular, $U\in C^{1+\alpha}_{\rm{loc}}(\overline{\mathbb R^{m+1}_+}\times (0,\infty)\setminus\Sigma)$.
Applying higher order boundary regularity theory of \eqref{limit_eqn}, we conclude that $U\in C^{\infty}_{\rm{loc}}(\overline{\mathbb R^{m+1}_+}\times (0,\infty)\setminus\Sigma)$. This yields part A) of Theorem \ref{main}.

Observe that for any sufficiently large $t_0>0$, and $X_0\in\partial\R^{m+1}_+$, if choose $R=\frac{\sqrt{t_0}}2$, then
\begin{align*}
\mathcal{E}(U_\eps, (X_0,t_0), R)
&= \int_0^{\frac{3t_0}4} \int_{\R^{m+1}_+} \frac12|\nabla U_\eps|^2 \mathcal{G}_{X_0,t_0}dXdt+
\int_0^{\frac{3t_0}4}\int_{\partial \R^{m+1}_+}\frac{c_{\frac12}}{4\eps^2}(1-|u_\eps|^2)^2 \mathcal{G}_{X_0,t_0}dxdt\\
&\le  Ct_0^{-\frac{m+1}2}\int_0^{\frac{3t_0}4} \Big(\int_{\R^{m+1}_+} \frac12|\nabla U_\eps|^2dX
+\int_{\partial \R^{m+1}_+}\frac{c_{\frac12}}{4\eps^2}(1-|u_\eps|^2)^2dx\Big)dt\\
&\le Ct_0^{-\frac{m-1}2}\int_{\R^{m+1}_+} \frac12|\nabla U_0|^2dXdt=Ct_0^{-\frac{m-1}2}E_0<\eps_0^2,
\end{align*}
uniformly in $\eps$, provided $t_0>\big(\frac{CE_0}{\eps_0^2}\big)^{\frac{2}{m-1}}$.
Here we have used \eqref{eq:energy_est2}.  

Hence by Lemma \ref{eps_gradient_est} and Lemma \ref{lem-conti}, we can conclude
that $\Sigma\cap (\partial\R^{m+1}_+\times [t_0,\infty))=\emptyset$, and 
$U_\eps\rightarrow U$ in $C^2_{\rm{loc}}(\overline{\R^{m+1}_+}\times [t_0,\infty))$.
Furthermore,  it holds that
$$
|\nabla U(X,t)|\le \frac{c}{\sqrt{t}},
$$ for all $X\in\overline{\R^{m+1}_+}$ and $t$ sufficiently large. There exists a point $p\in \mathbb S^{l-1}$
such that $U(\cdot, t)\rightarrow p$ in $C^2_{\rm{loc}}(\overline{\R^{m+1}_+})$ as $t\to \infty$. Hence
$u(\cdot, t)$ also converges to $p$ in $C^2_{\rm{loc}}({\partial\R^{m+1}_+})$ as $t\to \infty$. This yields part B) of Theorem \ref{main}.

The proof of part C) can be done in the same way as in Cheng \cite{Cheng}. We sketch it as follows. 
First recall that for any $\delta>0$, there exists a sufficiently large $K(\delta)>0$ such that for any $t_0>0$ and $0<R<\frac{\sqrt{t_0}}2$, it holds 
for $t_0-4R^2\le t\le t_0-R^2$, 
$$
\mathcal{G}_{(X_0,t_0)}(X,t)\le\begin{cases} R^{-(m+1)} & \forall X\in\R^{m+1}_+,\\
\delta \mathcal{G}_{(X_0,t_0)+(0, R^2)}(X, t) & \ {\rm{if}}\ X\in \R^{m+1}_+ \ {\rm{and}}\ |X-X_0|\ge K(\delta) R.
\end{cases}
$$  
Hence 
\begin{align*}
&\E(U_\epsilon, (X_0, t_0), R) \\
&\le R^{-(m+1)}\int_{t_0-4R^2}^{t_0-R^2}\Big(\int_{B_{K(\delta)R}^+(X_0)} |\nabla U_\eps|^2
+\int_{B_{K(\delta)R}^+(X_0)\cap\partial \R^{m+1}_+} \frac{c_\frac12}{4\eps^2}(1-|u_\eps|^2)^2\Big)\,dt\\  
&+\delta \int_{t_0-4R^2}^{t_0-R^2}\Big(\int_{\R^{m+1}_+} |\nabla U_\eps|^2\mathcal{G}_{(X_0, t_0+R^2)}
+\int_{\partial \R^{m+1}_+} \frac{c_\frac12}{4\eps^2}(1-|u_\eps|^2)^2\mathcal{G}_{(X_0, t_0+R^2)}\Big)\,dt.
\end{align*}
On the other hand,
\begin{align*}
&\delta \int_{t_0-4R^2}^{t_0-R^2}\Big(\int_{\R^{m+1}_+} |\nabla U_\eps|^2\mathcal{G}_{(X_0, t_0+R^2)}
+\int_{\partial \R^{m+1}_+} \frac{c_\frac12}{4\eps^2}(1-|u_\eps|^2)^2\mathcal{G}_{(X_0, t_0+R^2)}\Big)\,dt\\
&\le 2\delta \int_{t_0-4R^2}^{t_0-R^2} (R^2+t_0-t)^{-1} \mathcal{D}(U_\eps, (X_0, t_0+R^2), \sqrt{R^2+t_0-t}) \,dt\\
&\le 2\delta \Big(\int_{t_0-4R^2}^{t_0-R^2} (R^2+t_0-t)^{-1}\,dt\Big) \mathcal{D}(U_\eps, (X_0, t_0+R^2), \sqrt{R^2+t_0})\\
&\le C\delta (t_0+R^2)^{\frac{1-m}{2}}E_0\le \frac12\eps_0^2,
\end{align*}
provided $\delta>0$ is chosen to be sufficiently small. Here we have used the monotonicity inequality for
$\mathcal{D}(U_\eps, (X_0, t_0+R^2), r)$ in the proof.

Note that $\Sigma_{t_0}=\displaystyle\cap_{0<\textcolor{red}{R}<\sqrt{t_0}}\Sigma_{t_0}^R$, where
$$
\Sigma_{t_0}^R=\Big\{X_0\in\partial\R^{m+1}_+: \ \liminf_{\eps\to 0} \E(U_\eps, (X_0, t_0), R)\ge\eps_0^2\Big\}.
$$
Thus we obtain that for any $X_0\in \Sigma_{t_0}^R$, it holds
$$
R^{m+1}\le \frac{2}{\eps_0^2} \lim_{\eps\to 0}\int_{t_0-4R^2}^{t_0-R^2}\Big(\int_{B_{K(\delta)R}^+(X_0)} |\nabla U_\eps|^2
+\int_{B_{K(\delta)R}^+(X_0)\cap\partial \R^{m+1}_+} \frac{c_\frac12}{4\eps^2}(1-|u_\eps|^2)^2\Big)\,dt
$$
so that by Vitali's covering Lemma we can show that
$$
H^{m-1}_{K(\delta) R}(\Sigma_{t_0}^R) \le C(K(\delta), E_0).
$$
This implies $H^{m-1}(\Sigma_{t_0})<\infty$, after sending $R\to 0$. 

\end{proof}

\section{Appendix A: Uniform estimate of heat kernels}

In this section, we will sketch a proof of the gradient estimate \eqref{C1-est} for the solution
$\widehat{\h}_Y^\ve$ of the auxiliary equation \eqref{auxi}, which holds uniformly with respect to
$\ve$.  We refer the reader to \cite{Lieberman} Theorem 4.31, in which
an estimate similar to \eqref{C1-est} is established but with a constant possibly depending on $\ve$.
Here we will provide a proof based on an explicit Green function representation
of the heat equation under an oblique boundary condition. 

First recall the heat kernel in $\R^{m+1}$ given by
$$
\Gamma(x,t)=\begin{cases} \displaystyle\frac{1}{(4\pi t)^{\frac{m+1}2}} \exp \big(-\frac{|x|^2}{4t}\big), &
(x,t)\in \mathbb R^{m+1}\times (0,\infty),\\
0, & (x,t)\in \mathbb R^{m+1}\times (-\infty, 0].
\end{cases}
$$
For $y=(y_1,\cdots, y_m, y_{m+1})\in \mathbb R^{m+1}_+$, denote 
$y^*=(y_1,\cdots, y_m, -y_{m+1})$. Define $G^\ve(x,y,t): \mathbb R^{m+1}_+\times
\mathbb R^{m+1}_+\times \mathbb R\to\mathbb R$ by 
\begin{equation}\label{green1}
G^\ve(x,y, t)= \Gamma(x-y, t)-\Gamma(x-y^*, t)-2\int_0^\infty e^{-\frac{3}{4\ve^2}\tau}
D_{m+1}\Gamma(x-y^*+\tau e_{m+1}, t)\,d\tau,
\end{equation}
where $\displaystyle D_{m+1}\Gamma(z,t)=\frac{\partial \Gamma}{\partial x_{m+1}} (z,t)$ and
$e_{m+1}=(0', 1)\in\mathbb R^{m+1}$. Then we have
\begin{lemma} $G^\ve$ is the Green function of the heat equation in
$\mathbb R^{m+1}_+$ with an oblique boundary condition: for any fixed $y\in \mathbb R^{m+1}_+$, 
\begin{equation}\label{heat-oblique}
\begin{cases}
(\partial_t-\Delta) G^\ve(x,y,t)= \delta(x-y)\delta(t), & (x,t)\in \mathbb
 R^{m+1}_+\times \mathbb R_+,\\
\displaystyle\frac{\partial G^\ve}{\partial x_{m+1}}(x,y, t)-\frac{3}{4\ve^2} G^\ve(x,y,t)=0, & x\in \partial\mathbb R^{m+1}_+\times
[0,\infty).
\end{cases}
\end{equation}
\end{lemma}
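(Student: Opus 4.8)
The plan is to verify the ansatz \eqref{green1} directly; it is the classical ``method of images plus correction'' formula for an oblique (Robin-type) boundary condition, the $\tau$-integral being precisely the correction that upgrades the Neumann reflection $-\Gamma(x-y^*,t)$ to the Robin condition with coefficient $\tfrac{3}{4\eps^2}$. For the interior equation I would first note that, for fixed $y\in\R^{m+1}_+$, the arguments $x-y^*$ and $x-y^*+\tau e_{m+1}$ have $(m+1)$-st coordinate equal to $x_{m+1}+y_{m+1}>0$, resp. $x_{m+1}+y_{m+1}+\tau>0$, for all $x\in\overline{\R^{m+1}_+}$ and $\tau\ge 0$, so they never reach the origin; hence $\Gamma(x-y^*,t)$ and $D_{m+1}\Gamma(x-y^*+\tau e_{m+1},t)$ are smooth on $\overline{\R^{m+1}_+}\times\R$ and solve the homogeneous heat equation there (a spatial derivative of a solution of the heat equation is again a solution). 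Since $\Gamma$ and its derivatives decay exponentially in the spatial variable and the weight $e^{-\frac{3}{4\eps^2}\tau}$ is integrable, one may differentiate under the $\tau$-integral, so the whole third term solves the homogeneous heat equation as well. The only distributional source is $\Gamma(x-y,t)$, which yields $(\partial_t-\Delta)\Gamma(x-y,\cdot)=\delta(x-y)\delta(t)$; this is the first line of \eqref{heat-oblique}.

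The boundary condition is the heart of the matter. On $\{x_{m+1}=0\}$ one has $x-y=(x'-y',-y_{m+1})$ and $x-y^*=(x'-y',y_{m+1})$, and since $\Gamma(\cdot,t)$ is radial these two points give the same value, so the first two terms of \eqref{green1} cancel on the boundary and $G^\eps\big|_{x_{m+1}=0}$ equals the $\tau$-integral alone. For the normal derivative I would use the identity $\partial_{x_{m+1}}\Gamma(x-y^*+\tau e_{m+1},t)=\partial_\tau\Gamma(x-y^*+\tau e_{m+1},t)$, so that $\partial_{x_{m+1}}$ of the integrand equals $\partial_\tau\big(D_{m+1}\Gamma(x-y^*+\tau e_{m+1},t)\big)$; an integration by parts in $\tau$ then kills the $\tau=\infty$ endpoint (exponential decay), produces $+2\,D_{m+1}\Gamma(x-y^*,t)$ from the $\tau=0$ endpoint, and leaves exactly $\tfrac{3}{4\eps^2}$ times the third term. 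Using $D_{m+1}\Gamma(z,t)=-\tfrac{z_{m+1}}{2t}\Gamma(z,t)$ together with the radial symmetry of $\Gamma$, a short sign bookkeeping gives $\partial_{x_{m+1}}[\Gamma(x-y,t)-\Gamma(x-y^*,t)]\big|_{x_{m+1}=0}=\tfrac{y_{m+1}}{t}\Gamma((x'-y',y_{m+1}),t)$, which cancels exactly the $\tau=0$ endpoint contribution $2\,D_{m+1}\Gamma(x-y^*,t)\big|_{x_{m+1}=0}=-\tfrac{y_{m+1}}{t}\Gamma((x'-y',y_{m+1}),t)$. What survives is $\partial_{x_{m+1}}G^\eps\big|_{x_{m+1}=0}=\tfrac{3}{4\eps^2}\,(\tau\text{-integral})\big|_{x_{m+1}=0}=\tfrac{3}{4\eps^2}\,G^\eps\big|_{x_{m+1}=0}$, which is the second line of \eqref{heat-oblique}.

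I expect no conceptual obstacle: the two points requiring care are the justification of differentiating under the $\tau$-integral and of the integration by parts (both handled by the exponential weight together with the smoothness and decay of $\Gamma$ away from $(0,0)$), and the sign bookkeeping in the boundary computation, where it is convenient to keep straight $\partial_{x_{m+1}}$ versus $\partial_\tau$ of $\Gamma(x-y^*+\tau e_{m+1},t)$. As an independent cross-check one may instead take the Fourier transform in $x'$ and the Laplace transform in $t$, which reduces \eqref{heat-oblique} to a constant-coefficient second-order ODE in $x_{m+1}$ on $(0,\infty)$ with a Robin condition at the origin, whose Green's function is elementary and agrees with \eqref{green1} after inversion; but the direct verification above is shorter and is the one I would write out.
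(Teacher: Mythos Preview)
Your proposal is correct and follows essentially the same approach as the paper: both verify the interior equation by noting that the reflected and $\tau$-shifted terms are smooth caloric functions on $\overline{\R^{m+1}_+}$, and both handle the boundary condition via the identity $\partial_{x_{m+1}}=\partial_\tau$ on the integrand together with an integration by parts (the paper packages this slightly more compactly by writing $\partial_{x_{m+1}}G^\eps-\tfrac{3}{4\eps^2}G^\eps$ at once and recognizing the integrand as the exact $\tau$-derivative $\partial_\tau\big(e^{-\frac{3}{4\eps^2}\tau}D_{m+1}\Gamma\big)$). Your additional remarks on justifying differentiation under the integral and the Fourier--Laplace cross-check are helpful but not needed for the argument.
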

\begin{proof} Since $y^*\in \mathbb R^{m+1}_{-}$ for $y\in\mathbb R^{m+1}_+$, it follows that
$x-y^*\not=0$ and $x-y^*+\tau e_{m+1}\not =0$ for any $x\in\mathbb R^{m+1}_+$ and $\tau>0$.
Hence we have
$$(\partial_t-\Delta) G^\ve(x,y,t)=(\partial_t-\Delta) \Gamma(x-y, t)=\delta(x-y)\delta(t).$$
To check the boundary condition, let $x\in\partial\mathbb R^{m+1}_+$. Then we have
that $x_{m+1}=0$ and $|x-y|=|x-y^*|$ so that $\Gamma(x-y,t)=\Gamma(x-y^*, t)$ and
$D_{m+1}\Gamma(x-y^*,t)=-D_{m+1}\Gamma(x-y,t)$. Hence
\begin{eqnarray*}
&&\displaystyle\frac{\partial G^\ve}{\partial x_{m+1}}(x,y, t)-\frac{3}{4\ve^2} G^\ve(x,y,t)\\
&&=-2D_{m+1}\Gamma(x-y^*,t)-2\int_0^\infty e^{-\frac{3}{4\ve^2}\tau}
\frac{\partial}{\partial x_{m+1}}[D_{m+1}\Gamma(x-y^*+\tau e_{m+1}, t)]\,d\tau\\ 
&&+\frac{6}{4\ve^2}\int_0^\infty e^{-\frac{3}{4\ve^2}\tau}D_{m+1}\Gamma(x-y^*+\tau e_{m+1}, t)\,d\tau\\
&&=-2D_{m+1}\Gamma(x-y^*,t)-2\int_0^\infty \frac{\partial}{\partial\tau}
\big(e^{-\frac{3}{4\ve^2}\tau}D_{m+1}\Gamma(x-y^*+\tau e_{m+1}, t)\big)\,d\tau\\
&&=0
\end{eqnarray*}
holds for $x\in\partial\mathbb R^{m+1}_+$.
\end{proof}

For any bounded $f\in C^\infty(\mathbb R^{m+1}_+\times [0,\infty))$, it is well-known that
the unique smooth solution of
\begin{equation}
\begin{cases}(\partial_t-\Delta) u=f & {\rm{in}}\ \mathbb R^{m+1}_+\times [0,\infty),\\
\displaystyle\frac{\partial u}{\partial x_{m+1}}-\frac{3}{4\ve^2} u=0 & {\rm{on}}\ \partial\mathbb R^{m+1}_+\times [0,\infty),\\
u=0 & {\rm{on}}\ \mathbb R^{m+1}_+\times \{0\},
\end{cases}
\end{equation}
is given by the Duhamel formula
\begin{equation}\label{duhamel1}
u_\ve(x,t)=\int_0^t\int_{\mathbb R^{m+1}_+} G^\ve(x, y, t-s) f(y,s)\,dyds, 
\ (x,t)\in \mathbb R^{m+1}_+\times [0,\infty).
\end{equation}

Now we are ready with the proof of the following theorem.

\begin{theor} \label{schauder} For any $f\in C^\infty(\overline{\mathbb R^{m+1}_+\times [0,\infty)})$ and $\ve>0$,
let $u^\ve:\mathbb R^{m+1}_+\times [0,\infty)\to\mathbb R$ be given by \eqref{duhamel1}.
Then for any $0<\alpha<1$ there exists a constant $C=C(m,\alpha)>0$ such that 
\begin{equation}\label{C2a-est}
\big\|u^\ve\big\|_{C^{2+\alpha}(\mathbb R^{m+1}_+\times [0,\infty))} 
\le C\big\|f\big\|_{C^{\alpha}(\mathbb R^{m+1}_+\times [0,\infty))}, \ \forall \ve>0. 
\end{equation}
\end{theor}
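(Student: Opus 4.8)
The plan is to feed the explicit Green's function \eqref{green1} into the Duhamel formula \eqref{duhamel1} and rewrite $u^\ve$ as a half-space \emph{Neumann} heat potential plus a correction that is an average, against a \emph{probability} measure in an auxiliary variable, of free heat potentials. The apparent obstacle is that the oblique coefficient $\tfrac{3}{4\ve^2}$ in \eqref{heat-oblique} blows up as $\ve\to 0$, which would spoil a naive Schauder estimate; the point that dissolves this is that after a single integration by parts in the variable $\tau$ of \eqref{green1} all the $\ve$-dependence collapses into a probability measure of unit mass, so it cannot influence the size of the constant.

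\emph{Rewriting the kernel.} Setting $\lambda:=\tfrac{3}{4\ve^2}$ and using $D_{m+1}\Gamma(x-y^*+\tau e_{m+1},t)=\partial_\tau\!\big[\Gamma(x-y^*+\tau e_{m+1},t)\big]$, an integration by parts in $\tau$ in the last term of \eqref{green1} turns it into $2\Gamma(x-y^*,t)-2\int_0^\infty\Gamma(x-y^*+\tau e_{m+1},t)\,\mu_\ve(d\tau)$, where $\mu_\ve(d\tau):=\lambda e^{-\lambda\tau}\,d\tau$ is a probability measure on $[0,\infty)$; hence
\[
G^\ve(x,y,t)=\big[\Gamma(x-y,t)+\Gamma(x-y^*,t)\big]-2\int_0^\infty\Gamma(x-y^*+\tau e_{m+1},t)\,\mu_\ve(d\tau),
\]
i.e.\ $G^\ve$ is the half-space Neumann heat kernel minus a $\mu_\ve$-average of translated reflected heat kernels. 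Plugging this into \eqref{duhamel1}, reflecting the source variable $y\mapsto w=y^*\in\R^{m+1}_-$ in the last term, and letting $\widehat f$ denote the even extension of $f$ across $\{x_{m+1}=0\}$ (so $\widehat f\in C^\alpha(\R^{m+1}\times[0,\infty))$ with $\|\widehat f\|_{C^\alpha}\le C\|f\|_{C^\alpha}$), I would obtain
\[
u^\ve(x,t)=\mathcal N[f](x,t)-2\int_0^\infty V(x+\tau e_{m+1},t)\,\mu_\ve(d\tau),
\]
with $\mathcal N[f](x,t):=\int_0^t\!\!\int_{\R^{m+1}}\Gamma(x-w,t-s)\widehat f(w,s)\,dw\,ds$ the Neumann heat potential of $f$ (written as a free potential of $\widehat f$) and $V(X,t):=\int_0^t\!\!\int_{\R^{m+1}_-}\Gamma(X-w,t-s)\widehat f(w,s)\,dw\,ds$ — and, decisively, neither object depends on $\ve$.

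\emph{$\ve$-free Schauder estimates.} I would then invoke classical parabolic Schauder theory twice. The interior estimate for the parabolic Newtonian potential of a $C^\alpha$ source gives $\|\mathcal N[f]\|_{C^{2+\alpha}(\R^{m+1}\times[0,T])}\le C(m,\alpha,T)\|f\|_{C^\alpha}$ for every $T>0$. For $V$ I would write $V=\mathcal N[f]-\mathcal P^+[f]$, where $\mathcal P^+[f](X,t):=\int_0^t\!\!\int_{\R^{m+1}_+}\Gamma(X-w,t-s)f(w,s)\,dw\,ds$ is the heat potential of $f$ over the upper half-space, for which the standard up-to-the-boundary estimate yields $\|\mathcal P^+[f]\|_{C^{2+\alpha}(\overline{\R^{m+1}_+}\times[0,T])}\le C(m,\alpha,T)\|f\|_{C^\alpha}$; hence $\|V\|_{C^{2+\alpha}(\overline{\R^{m+1}_+}\times[0,T])}\le C(m,\alpha,T)\|f\|_{C^\alpha}$ as well (equivalently, on $\overline{\R^{m+1}_+}$ the function $V$ is caloric with zero initial trace and the boundary estimate is read off the kernel directly). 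Since for each $\tau\ge 0$ the shift $x\mapsto x+\tau e_{m+1}$ is an isometry of $\overline{\R^{m+1}_+}$ into itself, $\|V(\cdot+\tau e_{m+1},\cdot)\|_{C^{2+\alpha}(\overline{\R^{m+1}_+}\times[0,T])}\le\|V\|_{C^{2+\alpha}(\overline{\R^{m+1}_+}\times[0,T])}$ uniformly in $\tau$, and because $\mu_\ve$ has unit mass one gets $\big\|\int_0^\infty V(\cdot+\tau e_{m+1},\cdot)\,\mu_\ve(d\tau)\big\|_{C^{2+\alpha}(\overline{\R^{m+1}_+}\times[0,T])}\le\sup_{\tau\ge 0}\|V(\cdot+\tau e_{m+1},\cdot)\|_{C^{2+\alpha}}\le C(m,\alpha,T)\|f\|_{C^\alpha}$. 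The triangle inequality then delivers \eqref{C2a-est} with a constant independent of $\ve$ (and, in the application to \eqref{C1-est}, independent of $Y$).

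The only genuinely non-routine ingredient is the $\tau$-integration by parts that exhibits the probability measure $\mu_\ve$; once it is in place, $\ve$-uniformity is automatic and the rest is textbook Schauder theory, so I expect this to be the main (and essentially only) obstacle. A more computational alternative — estimating $G^\ve$, $\nabla_X G^\ve$, $\nabla_X^2 G^\ve$ and their parabolic Hölder moduli directly from \eqref{green1}, where the factor $e^{-\lambda\tau}$ can only improve the $\tau$-integrability — also works but carries bulkier nonstandard kernel bookkeeping that the reduction above avoids. (Strictly, the classical potential estimates carry a dependence on the time horizon, so the bound is naturally stated on $\overline{\R^{m+1}_+}\times[0,T]$; this is harmless for the application, where the estimate is only used on a bounded cylinder, and the substance of the statement is its independence of $\ve$.)
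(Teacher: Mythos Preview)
Your proof is correct and takes a genuinely different route from the paper. The paper decomposes $G^\ve=G_1^\ve+G_2^\ve$ with $G_1^\ve(x,y,t)=\Gamma(x-y,t)-\Gamma(x-y^*,t)$ the half-space \emph{Dirichlet} kernel, and then analyzes the remainder $G_2^\ve$ directly: a change of variable in the $\tau$-integral exhibits $G_2^\ve$ as $(4\pi t)^{-(m+1)/2}e^{-|x-y^*|^2/(4t)}\,\Theta_\ve\big((x_{m+1}+y_{m+1})/\sqrt t\big)$ for a profile $\Theta_\ve$ whose derivatives of all orders are bounded uniformly in $\ve$; classical Schauder theory is then invoked for each piece. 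You instead integrate by parts in $\tau$ to turn $G^\ve$ into the \emph{Neumann} kernel minus a $\mu_\ve$-average of translated reflected heat kernels, so that $u^\ve=\mathcal N[f]-2\int_0^\infty V(\cdot+\tau e_{m+1},\cdot)\,d\mu_\ve$ with both $\mathcal N[f]$ and $V$ independent of $\ve$; the unit mass of $\mu_\ve$ then makes the $\ve$-uniformity automatic. Your argument is structurally cleaner (no explicit kernel profile to track), while the paper's route is more computational but makes the Schauder step for the remainder slightly more tangible, since $G_2^\ve$ visibly has the same singularity scaling as the heat kernel. One point that deserves a word of care in your write-up is the $C^{2+\alpha}$-up-to-the-boundary estimate for the half-space volume potential $\mathcal P^+[f]$ (equivalently, for $V$); this is indeed classical, but it is the step doing the real work in place of the paper's $\Theta_\ve$-bound, so it is worth a precise citation rather than just ``standard.''
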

\begin{proof} Decompose $G^\ve$ by $G^\ve =G^\ve_1+G^\ve_2$, where
$$G_1^\ve(x,y,t)=\Gamma(x-y, t)-\Gamma(x-y^*, t);  \ G_2^\ve=G^\ve-G^\ve_1,$$
and write $u^\ve=u_1^\ve+u_2^\ve$, where
$$
u_1^\ve(x,t)=\int_0^t\int_{\mathbb R^{m+1}_+} G^\ve_1(x,y, t-s)f(y,s)\,dyds;
\ u_2^\ve=u^\ve-u_1^\ve.
$$

Since $G_1^\ve(x,y,t)$ is the Green function of the heat equation on $\mathbb R^{m+1}_+$ with zero Dirichlet boundary condition,  by the standard Schauder theory
(see \cite{Lieberman}) we have that  $u_1^\ve\in C^\infty\big(\overline{\mathbb R^{m+1}_+}\times [0,\infty)\big)$ and
$$
\big\|u_1^\ve\big\|_{C^{2+\alpha}\big(\overline{\mathbb R^{m+1}_+}\times [0,\infty)\big)}
\le C(m, \alpha) \big\|f\big\|_{C^{\alpha}\big(\overline{\mathbb R^{m+1}_+}\times [0,\infty)\big)}.
$$

To prove a similar estimate for $u_2^\ve$ we first note that \eqref{green1} gives
\begin{equation}
\label{eq:G2eps}
G_2^\ve(x,y,t)=-2\int_0^\infty e^{-\frac{3}{4\ve^2}\tau}
D_{m+1}\Gamma(x-y^*+\tau e_{m+1}, t)\,d\tau.
\end{equation}
By direct computation we have that 
\[
D_{m+1}\Gamma\left(x-y^*+\tau e_{m+1},t\right) = \frac{-1}{2}\frac{1}{\left(4\pi t\right)^{\frac{m+1}{2}}}\frac{(x_{m+1}+y_{m+1}+\tau)}{t}\textrm{exp}\left(-\dfrac{\abs{x-y^*+\tau e_{m+1}}^2}{4t}\right).
\]
Moreover, by the very definition of $y^*$
\[
\abs{x-y^* + \tau e_{m+1}}^2 = \abs{x' -y'}^2 + (x_{m+1}+y_{m+1}+\tau)^2,
\] 
where we recall
\[
x' := \left(x_1,\ldots,x_m,0\right)\qquad y'=\left(y_1,\ldots,y_m,0\right).
\]
Therefore \eqref{eq:G2eps} becomes 
\[
G_2^{\eps}(x,y,t) = \frac{1}{\left(4\pi t\right)^{\frac{m+1}{2}}}e^{-\abs{x'-y'}^2}
\int_{0}^{+\infty}\frac{(x_{m+1}+y_{m+1}+\tau)}{t} \textrm{exp}\left(-\frac{3\tau}{4\eps^2}-\dfrac{\abs{x_{m+1}+y_{m+1}+\tau}^2}{4t}\right)\d \tau.
\]
We change variables in the integral according to 
\[
r:= \frac{x_{m+1}+y_{m+1}+\tau}{\sqrt{t}}.
\]
Moreover, we write 
\[
-\abs{x'-y'}^2 = -\abs{x-y^*}^2 + (x_{m+1}+y_{m+1})^2.
\]
Then,
\[
G_2^{\eps}(x,y,t) = \frac{1}{\left(4\pi t\right)^{\frac{m+1}{2}}}e^{-\abs{x-y^*}^2}e^{(x_{m+1}+y_{m+1})^2 +\frac{3}{4\eps^2}(x_{m+1}+y_{m+1})}
\int_{\frac{x_{m+1}+y_{m+1}}{\sqrt{t}}}^{+\infty}r \textrm{exp}\left(-\frac{3r\sqrt{t}}{4\eps^2}-\frac{r^2}{4}\right)\d r.
\]
We introduce the function $\Theta_\eps:[0,+\infty)\to \R$ given by 
\[
\Theta_\eps(\lambda):=e^{(x_{m+1}+y_{m+1})^2 +\frac{3}{4\eps^2}(x_{m+1}+y_{m+1})}
\int_{\lambda}^{+\infty}r \textrm{exp}\left(-\frac{3r\sqrt{t}}{4\eps^2}-\frac{r^2}{4}\right)\d r,
\]
and thus $G_2^{\eps}$ is represented as 
\[
G_2^{\eps}(x,y,t) = \frac{1}{\left(4\pi t\right)^{\frac{m+1}{2}}}e^{-\abs{x-y^*}^2}\Theta_\eps\left(\frac{x_{m+1}+y_{m+1}}{\sqrt{t}}\right).
\]

We have that for any $\eps>0$, $\Theta_\ve\in C^\infty([0,\infty))$.
Moreover, since 
\[
\Theta'_{\eps}(\lambda) = -e^{(x_{m+1}+y_{m+1})^2 +\frac{3}{4\eps^2}(x_{m+1}+y_{m+1})}\lambda \textrm{exp}\left(-\frac{3\lambda\sqrt{t}}{4\eps^2}-\frac{\lambda^2}{4}\right),
\]
we get that 
\[
\Theta_\eps'\left(\frac{x_{m+1}+y_{m+1}}{\sqrt{t}}\right) = -\left(\frac{x_{m+1}+y_{m+1}}{\sqrt{t}}\right)e^{-\frac{(x_{m+1}+y_{m+1})^2 + 4t }{4t}},
\]
which is bounded, uniformly with respect to $\eps$ and with respect to $t>0$.
Therefore, we conclude thanks to 
Schauder theory 
that 
$u_2^\ve\in C^\infty\big(\overline{\mathbb R^{m+1}_+}\times [0,\infty)\big)$ and
$$
\big\|u_2^\ve\big\|_{C^{2+\alpha}\big(\overline{\mathbb R^{m+1}_+}\times [0,\infty)\big)}
\le C(m, \alpha) \big\|f\big\|_{C^{\alpha}\big(\overline{\mathbb R^{m+1}_+}\times [0,\infty)\big)}.
$$
Combining the estimates for $u_1^\ve$ and $u_2^\ve$ yields \eqref{C2a-est}. 

\end{proof}

\medskip
Now we will give a proof of \eqref{C1-est}. To do it, 
let $\eta_1\in C^\infty_0(B_{\frac13}^m\times (-(\frac13)^2,0))$ be such
that $\eta_1=1$ in $B_{\frac{7}{24}}^m\times (-(\frac{7}{24})^2, 0)$, and $\eta_2\in C^\infty_0([0,\infty)$ be such
that $\eta_2=1$ in $[0, \frac13]$ and $\eta_2=0$ in $[\frac23, \infty)$.
Define $\eta(x,t)=\eta_1(x',t)\eta_2(x_{m+1})$ for $(x,t)\in \R^{m+1}_+\times \R$.
Then by direct calculations we obtain that
\begin{eqnarray*}
&&\big(\frac{\partial}{\partial x_{m+1}} (\widehat{\h}_Y^\ve \eta)-\frac{3}{4\ve^2} \widehat{\h}_Y^\ve \eta\big)(x,t)\\
&&=\widehat{\h}_Y^\ve(x,t) \eta_1(x',t)\eta_2'(x_{m+1})
+\big(\frac{\partial}{\partial x_{m+1}} \widehat{\h}_Y^\ve -\frac{3}{4\ve^2} \widehat{\h}_Y^\ve\big)(x,t) \eta(x,t)\\
&&=0+0=0
\end{eqnarray*}
holds for any $(x,t)\in \partial\mathbb R^{m+1}_+\times (0,\infty)\cap \Gamma_{\frac13}^+$.
Hence by Duhamel's formula, we conclude that for any $(x,t)\in P_{\frac{7}{24}}^+$, it holds
\begin{eqnarray}
(\widehat{\h}_Y^\ve \eta)(x,t)&=&\int_{\mathbb R^{m+1}_+\times (0,\infty)}
G^\ve(x,y, t-s) (\partial_t-\Delta) (\widehat{\h}_Y^\ve \eta)(y,s)\,dyds\nonumber\\
&=&m_Y^\ve \int_{\mathbb R^{m+1}_+\times (0,\infty)}
G^\ve(x,y, t-s) \eta(y,s)\,dyds\nonumber\\
&+& \int_{\mathbb R^{m+1}_+\times (0,\infty)}
G^\ve(x,y, t-s)\widehat{\h}_Y^\ve(y,s) (\partial_t\eta-\Delta\eta)(y,s)\,dyds\nonumber\\
&+&2 \int_{\mathbb R^{m+1}_+\times (0,\infty)} (\nabla_y G(x,y,t-s)\nabla\eta(y,s)+G(x,y,t-s)\Delta\eta(y,s))
\widehat{\h}_Y^\ve(y,s)\,dyds\nonumber\\
&=:&A^\ve(x,t)+B^\ve(x,t)+C^\ve(x,t).
\label{green2}
\end{eqnarray}
Applying Theorem  \ref{schauder}, there exists a constant $C>0$ independent of $\ve$ 
such that 
$$
\big\|A^\ve\big\|_{C^{2+\alpha}(\mathbb R^{m+1}_+\times (0,\infty))}\le C m_Y^\ve\le C|Y|^\alpha.
$$
For $B^\ve$ and $C^\ve$, it is not hard to see that
$$
\big\|\nabla B^\ve\big\|_{C^\alpha(\mathbb R^{m+1}_+\times (0,\infty))}
+\big\|\nabla C^\ve\big\|_{C^\alpha(\mathbb R^{m+1}_+\times (0,\infty))}
\le C \big\|\widehat{\h}_Y^\ve\big\|_{C^0(P_\frac13^+)}\le Cp_Y^\ve\le C|Y|^\alpha.
$$
Putting these estimates together, we conclude that
$\widehat{\h}_Y^\ve$ satisfies the gradient estimate \eqref{C1-est}.

\section{Appendix B: Proof of Theorem \ref{main} for general targets}

In this section, we will sketch the modifications that are necessary in order to show Theorem \ref{main} for any compact Riemannian
manifold $N\hookrightarrow \R^l$.  

To do it, first recall that there exists a constant $\delta_N>0$ such that both the nearest point projection map
$$\Pi_N: N_{\delta_N}\equiv \big\{y\in\R^l: \ d(y, N)<\delta_N\}\to N$$
and the square of distance function to $N$, $d^2(p, N)=|p-\Pi_N(p)|^2$,  are smooth in the $\delta_N$-neighborhood of $N$.  

Now let $\chi\in C_0^\infty([0,\infty))$ be such that 
$$\chi(t)=t \ {\rm{for}}\ 0\le t\le \delta_N^2; \ \ \chi(t)=2\delta_0 \ {\rm{for}}\  t\ge (2\delta_N)^2.$$
Then we replace the potential function $\frac{1}{4\eps^2}(1-|u|^2)^2$ by $\frac{1}{\eps^2} \chi(d^2(u, N))$. More precisely, we
consider the following approximated system:
\begin{equation}\label{half-flow1}
\begin{cases}
(\partial_t-\Delta) U_\eps=0 & \ {\rm{in}}\ \R^{m+1}_+\times (0,\infty),\\
U_\eps\big|_{t=0}=U_0 &\ {\rm{on}}\ \R^{m+1}_+,\\
\displaystyle\lim_{y\to 0^+} \frac{\partial U_\eps}{\partial y} =\frac{c_\frac12}{\eps^2} \chi'(d^2(U_\eps, N))D_{U_\eps}d^2(U_\eps, N)
&\ {\rm{on}}\ \R^m\times (0,\infty).
\end{cases}
\end{equation}

As in \eqref{eq:energy_est2}, it is readily seen that any solution $U_\eps$ of \eqref{half-flow1} satisfies the following energy inequality:
\begin{align}
\label{eq:energy_est3}
&\int_{0}^t\int_{\R^{m+1}_+}\abs{ \frac{\partial U_\eps (X,r)}{\partial t}}^2 \d X \d r
+\int_{\R^{m+1}_+}\abs{ \nabla_X U_\eps (X,t)}^2 \d X\nonumber\\
&+
\frac{c_\frac12}{\eps^2}\int_{\R^m} \chi(d^2(u_\eps, N)) \d x =
 \int_{\R^{m+1}_+} \vert \nabla_X U_0 (X)\vert^2 \d X
 \le \norm{u_0}_{\dot{H}^\frac12(\R^m)}^2.
\end{align}

As in section 3, we can similarly define the renormalized energies $\mathcal{D}(U_\eps, Z_0, R)$
and $\E(U_\eps, Z_0, R)$ for $U_\eps$ by simply replacing the term $(1-|u_\eps|^2)^2$ by 
$\chi(d^2(u_\eps, N))$. For example, 
\begin{align*}
\E(U_\eps, Z_0, R)&:=  \frac12\int_{T_R^+(Z_0)}\G_{X_0,t_0}(X,t) |\nabla U_\eps|^2 dXdt\\
&\quad+\frac{c_\frac12}{\eps^2}\int_{\partial^+ T_R^+(Z_0)}\G_{X_0,t_0}(X,t)\chi(d^2(u_\eps, N))dxdt.
\end{align*}
Then by the same argument as in Lemma 3.1, we have
\begin{lemma} For $Z_0=(X_0,t_0)\in \partial \R^{m+1}_+\times (0,\infty)$,
if $U_\eps$ solves \eqref{half-flow1} then it holds
that
\begin{align*}
\mathcal{D}(U_\eps, Z_0, r)\le \mathcal{D}(U_\eps, Z_0, R), \ \forall 0<r\le R<\sqrt{t_0},\\
\E(U_\eps, Z_0, r)\le \E(U_\eps, Z_0, R), \ \forall 0<r\le R<\frac{\sqrt{t_0}}{2}.
\end{align*}
\end{lemma}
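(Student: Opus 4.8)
The plan is to repeat the scaling-and-differentiation argument used for Lemma~\ref{lem-mono}; the only structural point to be checked is that the boundary nonlinearity of \eqref{half-flow1} is again, up to the constant $c_\frac12/\eps^2$, the gradient in the target variable of the potential occurring in the renormalized energies. Writing $W_N(u):=\chi(d^2(u,N))$ we have $D_uW_N(u)=\chi'(d^2(u,N))\,D_ud^2(u,N)$, so the third line of \eqref{half-flow1} reads $\lim_{y\to0^+}\partial_yU_\eps=\frac{c_\frac12}{\eps^2}D_uW_N(u_\eps)$ in the sense of traces, exactly as in the sphere case, where $W(u)=\frac14(1-|u|^2)^2$ and $D_uW(u)=-(1-|u|^2)u$. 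Moreover $W_N\ge0$, $\chi$ is nondecreasing, and $W_N$ is globally smooth and bounded with bounded derivatives of all orders, because $\chi$ is a compactly supported cutoff and $d^2(\cdot,N)$ is smooth on a neighbourhood of $N$ containing the support of $\chi'$; these are the only properties of the potential that enter the argument.

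First I would fix $Z_0=(X_0,t_0)$ with $X_0\in\partial\R^{m+1}_+$, set $U_{\eps,R}(X,t):=U_\eps(RX+X_0,R^2t+t_0)$ with trace $u_{\eps,R}$, and record that $U_{\eps,R}$ solves $\partial_tU_{\eps,R}=\Delta_XU_{\eps,R}$ in $\R^{m+1}_+\times(-R^{-2}t_0,\infty)$ with $\lim_{y\to0^+}\partial_yU_{\eps,R}=R\,\frac{c_\frac12}{\eps^2}D_uW_N(u_{\eps,R})$, the factor $R$ being the only place $R$ enters the equation. Using $\G^\frac12(RX,R^2t)=R^{-m-1}\G^\frac12(X,t)$, the change of variables $X\mapsto RX+X_0$, $t\mapsto R^2t+t_0$ yields
\[
\E(U_\eps,Z_0,R)=\frac12\int_{T_1^+}\G^\frac12\,|\nabla_XU_{\eps,R}|^2\,dXdt+\frac{c_\frac12}{\eps^2}R\int_{\partial^+T_1^+}\G^\frac12\,W_N(u_{\eps,R})\,dxdt .
\]
Differentiating in $R$, integrating by parts in $X$ against $\G^\frac12\nabla_XU_{\eps,R}$ and using $\nabla\G^\frac12=-\tfrac{X}{2t}\G^\frac12$ together with the heat equation --- exactly as in Lemma~\ref{lem-mono} --- produces the interior term and the boundary term $-\int_{\partial^+T_1^+}\lim_{y\to0^+}\big(\G^\frac12\,\partial_yU_{\eps,R}\big)\cdot\partial_RU_{\eps,R}$, where one uses $\lim_{y\to0^+}y\,\partial_yU_{\eps,R}=0$, hence $\partial_RU_{\eps,R}|_{y=0}=\partial_Ru_{\eps,R}$. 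Meanwhile $\frac{d}{dR}\big\{\tfrac{c_\frac12}{\eps^2}R\int\G^\frac12W_N(u_{\eps,R})\big\}=\tfrac{c_\frac12}{\eps^2}\int\G^\frac12W_N(u_{\eps,R})+\tfrac{c_\frac12}{\eps^2}R\int\G^\frac12 D_uW_N(u_{\eps,R})\cdot\partial_Ru_{\eps,R}$, and the last summand cancels the boundary term above thanks to the boundary condition of $U_{\eps,R}$. Writing $\partial_RU_{\eps,R}=\tfrac1R(X\cdot\nabla_XU_{\eps,R}+2t\partial_tU_{\eps,R})$, what survives is
\[
\frac{d}{dR}\E(U_\eps,Z_0,R)=\frac1{2R}\int_{T_1^+}\frac{\G^\frac12}{|t|}\,\big|X\cdot\nabla_XU_{\eps,R}+2t\partial_tU_{\eps,R}\big|^2\,dXdt+\frac{c_\frac12}{\eps^2}\int_{\partial^+T_1^+}\G^\frac12\,W_N(u_{\eps,R})\,dxdt\ \ge\ 0 ,
\]
since $W_N\ge0$. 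This gives the monotonicity of $\E$; the monotonicity of $\mathcal{D}$ follows by the same scheme applied to the single-time-slice form of the energy, as in Lemma~\ref{lem-mono}.

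I do not expect any genuine obstacle here, since the argument is a transcription of Lemma~\ref{lem-mono}; the only substantive observation is that the new boundary term $\chi'(d^2(u_\eps,N))\,D_ud^2(u_\eps,N)$ is again an exact gradient in $u$ matching the potential density in the renormalized energies, together with the sign $\chi\ge0$. The mild technical points are the legitimacy of differentiating under the integral sign and of the integration by parts; these are justified because $W_N$ is smooth and bounded with bounded derivatives and because solutions of \eqref{half-flow1} possess the same interior parabolic regularity and satisfy the energy inequality \eqref{eq:energy_est3} used in the sphere case, so that, in view of the Gaussian decay of $\G^\frac12$, all the integrals entering $\E$ and $\mathcal{D}$ converge.
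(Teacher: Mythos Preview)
Your proposal is correct and follows exactly the approach the paper intends: the paper gives no separate proof for this lemma, stating only that it follows ``by the same argument as in Lemma~\ref{lem-mono}'', and you have carried out precisely that transcription, correctly isolating the one structural point --- that the boundary nonlinearity $\chi'(d^2(u,N))\,D_ud^2(u,N)$ is the exact $u$-gradient of the nonnegative potential $W_N=\chi(d^2(\cdot,N))$ appearing in the renormalized energies --- so that the cancellation and the sign work go through verbatim.
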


As in Lemma 3.2, we also have the local energy inequality.
\begin{lemma} \label{local_energy_ineq5}  For any $\eta\in C_0^\infty(\mathbb R^{m+1})$, if $U_\eps$ solves \eqref{half-flow1} then 
it holds that
\begin{align}\label{local_energy_ineq6}
&\frac{d}{dt}\big\{\int_{\R^{m+1}_+} \frac12|\nabla U_\eps|^2\eta^2+\int_{\R^m} \frac{c_{\frac12}}{\eps^2} \chi(d^2(u_\eps, N)) \eta^2\big\}
+\frac12\int_{\R^{m+1}_+} |\partial_t U_\eps|^2\eta^2\nonumber\\
&\le 4\int_{\R^{m+1}_+} |\nabla U_\eps|^2|\nabla\eta|^2.
\end{align}
In particular, for any $Z_0=(X_0,t_0)\in\overline{\mathbb R^{m+1}_+} \times (0,\infty)$ and $0<R<\frac{\sqrt{t_0}}{2}$, we have that
\begin{equation}\label{local_energy_ineq7}
\int_{P_R^+(Z_0)} |\partial_t U_\eps|^2 \le CR^{-2}\Big(\int_{P_{2R}^+(Z_0)}|\nabla U_\eps|^2+\int_{\partial^+P_{2R}^+(Z_0)} 
\frac{c_\frac12}{\eps^2} \chi(d^2(u_\eps, N))\Big).
\end{equation}
\end{lemma}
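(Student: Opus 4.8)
The plan is to mimic the proof of Lemma~\ref{local_energy_ineq1} verbatim, the only change being to carry along the new boundary potential $\chi(d^2(\cdot,N))$ in place of $(1-|\cdot|^2)^2$. The key observation that makes the argument go through unchanged is that the third line of \eqref{half-flow1} has exactly the variational form
\[
\lim_{y\to 0^+}\frac{\partial U_\eps}{\partial y}
=\frac{c_{\frac12}}{\eps^2}\,\chi'(d^2(u_\eps,N))\,D_{u_\eps}d^2(u_\eps,N)
=\nabla_u\!\left(\frac{c_{\frac12}}{\eps^2}\,\chi(d^2(u_\eps,N))\right),
\]
that is, it is the $u$-gradient of precisely the boundary potential density that appears in \eqref{local_energy_ineq6}, just as $-\frac{c_{\frac12}}{\eps^2}(1-|u_\eps|^2)u_\eps=\nabla_u\big(\frac{c_{\frac12}}{4\eps^2}(1-|u_\eps|^2)^2\big)$ does in the spherical case of \eqref{eq:approx_ext_+}. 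Because $\chi$ was taken to equal the identity for $t\le\delta_N^2$ and a constant for $t\ge(2\delta_N)^2$, the composition $\chi(d^2(\cdot,N))$ is a smooth bounded function on all of $\R^\ell$, so the chain rule and the integration by parts used below are legitimate; at fixed $\eps$, $U_\eps$ is smooth up to $\{y=0\}$ by standard parabolic regularity (and, if desired, one first establishes the identity for smooth solutions such as those produced by the time-discretization scheme of Section~\ref{sec:ex_approx} adapted to \eqref{half-flow1}).

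First I would multiply the heat equation in \eqref{half-flow1} by $\partial_t U_\eps\,\eta^2$ and integrate over $\R^{m+1}_+$. Integrating by parts in $X$, the interior contributions give $\tfrac12\frac{d}{dt}\int_{\R^{m+1}_+}|\nabla U_\eps|^2\eta^2$ together with the cross term $-2\int_{\R^{m+1}_+}\eta\,\langle\partial_t U_\eps,\nabla U_\eps\,\nabla\eta\rangle$, exactly as in the proof of Lemma~\ref{local_energy_ineq1}. The boundary term on $\{y=0\}$ is $-\int_{\R^m}\big(\lim_{y\to0^+}\partial_y U_\eps\big)\cdot\partial_t u_\eps\,\eta^2$, and inserting the boundary condition displayed above together with $\frac{d}{dt}\chi(d^2(u_\eps,N))=\chi'(d^2(u_\eps,N))\,D_{u_\eps}d^2(u_\eps,N)\cdot\partial_t u_\eps$ turns it into $-\frac{d}{dt}\big(\frac{c_{\frac12}}{\eps^2}\int_{\R^m}\chi(d^2(u_\eps,N))\eta^2\big)$, since $\eta$ is independent of $t$. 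Moving this term to the left and absorbing the cross term by Young's inequality, $2|\eta\langle\partial_t U_\eps,\nabla U_\eps\nabla\eta\rangle|\le\tfrac12|\partial_t U_\eps|^2\eta^2+2|\nabla U_\eps|^2|\nabla\eta|^2$, yields \eqref{local_energy_ineq6} (the constant $4$ on the right being far from sharp).

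The localized bound \eqref{local_energy_ineq7} is then obtained exactly as \eqref{local_energy_ineq3} was deduced from \eqref{local_energy_ineq2}: choose $\eta\in C_0^\infty(\R^{m+1})$ with $0\le\eta\le1$, $\eta\equiv1$ on $B_R(X_0)$, $\eta\equiv0$ outside $B_{2R}(X_0)$ and $|\nabla\eta|\le 4R^{-1}$; use Fubini's theorem to select a time slice $t_\ast\in(t_0-4R^2,t_0-R^2)$ on which the weighted energy over $B_{2R}^+(X_0)$ is at most $CR^{-2}$ times the parabolic integral over $P_{2R}^+(Z_0)$; and integrate \eqref{local_energy_ineq6} in time starting from $t_\ast$, discarding the nonnegative energy term at the final time. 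I do not expect any genuine obstacle here: the only point requiring care is that the boundary contribution produced by the integration by parts is really a perfect time derivative, which is guaranteed by the variational form of the boundary condition in \eqref{half-flow1} and the global smoothness of $\chi(d^2(\cdot,N))$ on $\R^\ell$ ensured by the choice of $\chi$.
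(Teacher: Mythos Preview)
Your proposal is correct and follows exactly the approach the paper intends: the paper does not give a separate proof of this lemma but simply states it ``As in Lemma 3.2,'' and your argument reproduces the proof of Lemma~\ref{local_energy_ineq1} with the potential $\frac{c_{\frac12}}{4\eps^2}(1-|u_\eps|^2)^2$ replaced by $\frac{c_{\frac12}}{\eps^2}\chi(d^2(u_\eps,N))$, correctly identifying the variational structure of the boundary condition as the reason the argument transfers verbatim.
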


We also have the following clearing out result for any solution $U_\eps$ of \eqref{half-flow1}. 
\begin{lemma}\label{clean_out1} There exists $\eps_0>0$ such that 
if $U_\eps$ solves \eqref{half-flow1} and satisfies
$$\E(U_\eps, (X_0,t_0), 1) \le\eps_0^2,$$
for some $X_0\in\partial\R^{m+1}_+$ and $t_0>4$, then $d(U_\eps, N)\le\delta_N$ and $\chi(d^2(U_\eps, N))=d^2(U_\eps, N)$
hold on $P_\beta^+(X_0,t_0)$ for some
$\beta>0$ that is independent of $U_\eps, X_0,$ and $t_0$.
\end{lemma}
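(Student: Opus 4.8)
The plan is to reproduce the proof of Lemma \ref{clearing-out} line for line, with $(1-|u|^2)^2$ replaced by $\chi(d^2(u,N))$ and the boundary nonlinearity $(1-|u|^2)u$ replaced by $\chi'(d^2(u,N))D_ud^2(u,N)$; the argument then goes through once two sphere-specific facts are recovered in the general setting. The first is that the boundary nonlinearity is uniformly bounded: since $\chi'(\tau)=0$ for $\tau>(2\delta_N)^2$ and $D_pd^2(p,N)=2(p-\Pi_N(p))$ satisfies $|D_pd^2(p,N)|=2d(p,N)\le 4\delta_N$ whenever $d^2(p,N)\le(2\delta_N)^2$, one has $|\chi'(d^2(U_\eps,N))D_{U_\eps}d^2(U_\eps,N)|\le C(N)$ everywhere, which plays the role of the bound $|(1-|u_\eps|^2)u_\eps|\le C$ used throughout Lemma \ref{clearing-out}. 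The second is a uniform bound $|U_\eps|\le C_0=C_0(N,\|u_0\|_{L^\infty})$: running the maximum-principle argument of the sphere case on the heat subsolution $\phi_\eps=|U_\eps|^2$, one observes that at a putative interior-in-time boundary maximum with $|U_\eps|$ large one would have $d(U_\eps,N)\ge 2\delta_N$, hence $\chi'(d^2(U_\eps,N))=0$ and zero boundary flux, contradicting the Hopf lemma. Since $\chi$ is nondecreasing and coincides with the identity on $[0,\delta_N^2]$, this bound yields the comparison $d^2(\xi,N)\le C_1\chi(d^2(\xi,N))$ for all $|\xi|\le C_0$, which replaces the elementary identity $(1-|u_\eps|^2)^2\simeq(1-|u_\eps|)^2$ of the sphere proof.

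With these two facts, the argument splits into the same two cases as Lemma \ref{clearing-out}. In the case $\eps\ge\frac12$, I would set $V_\eps(x,y,t)=\int_0^yU_\eps(x,s,t)\,ds$ and note, as before, that $(\partial_t-\Delta)V_\eps$ is independent of $y$ and equals $-\lim_{y\to0^+}\partial_yU_\eps(x,0,t)$, hence is bounded by $\eps^{-2}C(N)\le4C(N)$, while $V_\eps$ vanishes on $\{y=0\}$; parabolic regularity then gives a $C^{1,1}$ bound for $U_\eps$ on $P_{1/2}^+(X_0,t_0)$ uniform in $\eps$, i.e.\ a Lipschitz estimate as in \eqref{est-4.1}. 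The monotonicity inequality and the hypothesis $\E(U_\eps,(X_0,t_0),1)\le\eps_0^2$ give, exactly as in Lemma \ref{clearing-out},
\[
\frac{c_{\frac12}}{\eps^2}\int_{t_0-4\delta_1^2}^{t_0-\delta_1^2}\int_{\partial\R^{m+1}_+\cap B_{\delta_1}(X_0)}\chi(d^2(u_\eps,N))\,dx\,dt\le C\delta_1^{-(m+1)}\eps_0^2 .
\]
Combined with $\chi(d^2(u_\eps,N))\ge\min(d^2(u_\eps,N),\delta_N^2)$ and the $\eps$-independent Lipschitz bound on $u_\eps$, this forces $d(u_\eps,N)\le\frac{\delta_N}{2}$ pointwise on the slab $\{|X-X_0|\le\delta_1,\ y=0,\ t_0-4\delta_1^2\le t\le t_0-\delta_1^2\}$ for $\eps_0$ small, and one further use of the space-time Lipschitz estimate upgrades this to $d(U_\eps,N)\le\delta_N$ on $P_{\delta_1}^+(X_0,t_0)$; since $\chi$ is the identity on $[0,\delta_N^2]$, one also gets $\chi(d^2(U_\eps,N))=d^2(U_\eps,N)$ there.

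In the case $\eps\le\frac12$, for each $(X_1,t_1)\in P_\delta^+(X_0,t_0)$ with $X_1=(x_1,y_1)$ I would rescale, $\tilde U_\eps(X,t)=U_\eps((x_1,0)+\eps^2X,t_1+\eps^4t)$, so that $\tilde U_\eps$ satisfies the equation with boundary parameter normalised to $1$; by the monotonicity inequality and the (unchanged) Lemma \ref{lem-conti}, $\E(\tilde U_\eps,(0,0),1)=\E(U_\eps,((x_1,0),t_1),\eps^2)\le\E(U_\eps,((x_1,0),t_1),\tfrac12)\le C(\eps_0^2+\eps_1E_0)$, and the case $\eps\ge\frac12$ applied to $\tilde U_\eps$ gives $d(U_\eps,N)\le\delta_N$ on $P_{\delta_1}^+(X_0,t_0)\cap\{0\le y\le\delta_1\eps^2\}$ for an $\eps$-independent $\delta_1$. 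For $\delta_1\eps^2\le y\le\delta_2$ I would rerun the gradient-estimate argument of Case~2 of Lemma \ref{clearing-out} with $R=\tfrac14y_1$: the analogues of \eqref{claim-1}--\eqref{claim-2}, with $\eps^{-2}\chi(d^2(u_\eps,N))$ in place of $\eps^{-2}(1-|u_\eps|^2)^2$, follow from monotonicity and Lemma \ref{lem-conti}; interior estimates for the heat equation satisfied by $\nabla U_\eps$ and $\partial_tU_\eps$ give $|\nabla U_\eps|\le C\sqrt{\eps_0+\eps_1}\,R^{-1}$ and $|\partial_tU_\eps|\le C\sqrt{\eps_0+\eps_1}\,R^{-2}$ near $X_1$; and the averaging--Jensen--Poincar\'e chain goes through verbatim with the $1$-Lipschitz function $\xi\mapsto\mathrm{dist}(\xi,N)$ in place of $\xi\mapsto|1-|\xi||$ and with $\mathrm{dist}^2(u_\eps,N)\le C_1\chi(d^2(u_\eps,N))$, yielding $\mathrm{dist}(\bar U_\eps(t_1),N)\le C\sqrt{\eps_0+\eps_1}\,(1+\delta_1^{-1/2})$ and hence $d(U_\eps(X_1,t_1),N)\le\delta_N$ for $\eps_0$ small. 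Taking $\beta=\min(\delta_1,\delta_2)$ then gives $d(U_\eps,N)\le\delta_N$, and therefore $\chi(d^2(U_\eps,N))=d^2(U_\eps,N)$, on all of $P_\beta^+(X_0,t_0)$.

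The only genuinely new content relative to Lemma \ref{clearing-out} is contained in the first paragraph: the uniform $L^\infty$ bound on $U_\eps$ --- which in the sphere case was simply $|U_\eps|\le1$ --- and the comparison $d^2(\cdot,N)\le C_1\chi(d^2(\cdot,N))$ on bounded sets. Once these are established, every estimate of Lemma \ref{clearing-out} transcribes mechanically. I expect the careful verification of the $L^\infty$ bound to be the main obstacle, the delicate point being to run the $|U_\eps|^2$ maximum principle on the unbounded domain $\R^{m+1}_+$ and to check that the boundary flux genuinely switches off outside the $2\delta_N$-tube around $N$, so that nothing can push $|U_\eps|$ above $C_0$.
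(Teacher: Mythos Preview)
Your proposal is correct and follows exactly the route the paper intends. In Appendix B the paper states Lemma \ref{clean_out1} without proof, leaving it implicit that one repeats Lemma \ref{clearing-out} with $(1-|u|^2)^2$ replaced by $\chi(d^2(u,N))$; your write-up carries this out in detail and correctly isolates the two points that are not automatic in the general-target setting.

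Two small remarks. First, the uniform boundedness of the boundary nonlinearity is in fact built into the paper's choice of $\chi$: since $\chi'$ is compactly supported and $|D_pd^2(p,N)|=2d(p,N)$ is bounded on that support, no $L^\infty$ control on $U_\eps$ is needed for this step. Second, the place where the $L^\infty$ bound on $U_\eps$ is genuinely required is the comparison $d^2(u_\eps,N)\le C_1\chi(d^2(u_\eps,N))$ in the Jensen--Poincar\'e step of Case~2, exactly as you identify. The paper does not state a general-$N$ analogue of the sphere bound $|U_\eps|\le 1$, so your Hopf-lemma argument for $|U_\eps|\le C_0(N)$ fills a gap the paper leaves implicit; note that it only uses that $\chi'$ vanishes outside the $2\delta_N$-tube, so the boundary flux for $|U_\eps|^2$ is zero at any putative large maximum, and your assumption that $\chi$ is nondecreasing (not stated in the paper but clearly intended) is what makes the comparison $\chi(\tau)\ge\min(\tau,\delta_N^2)$ work.
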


The next Lemma,  analogous to Proposition 5.1, plays a crucial role in the proof.
\begin{lemma}\label{c1a} Let $\{U_\eps\}_{\eps>0}$ be a family of solutions to \eqref{half-flow1}. Assume that
\begin{equation}\label{unif_bdd}
d(U_\eps, N)\le\delta_N, \ |\partial_t U_\eps|+|\nabla U_\eps|\le 4 \ {\rm{in}}\ P_1^+.
\end{equation}
Then $\|U_\eps\|_{C^{1+\alpha}(P_\frac14^+)}\le C(\alpha)$ for any $\alpha\in (0,1)$ and $\eps>0$.
\end{lemma}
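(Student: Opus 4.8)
The plan is to follow the proof of Proposition~\ref{prop-regu} line by line, with the Fermi‑type decomposition associated to $N$ playing the role of the polar decomposition $U_\eps=\rho_\eps\omega_\eps$ used there. Since \eqref{unif_bdd} guarantees $d(U_\eps,N)\le\delta_N$ throughout $P_1^+$, the maps
\[
v_\eps:=\Pi_N(U_\eps),\qquad w_\eps:=U_\eps-\Pi_N(U_\eps)
\]
are well defined; one has $|w_\eps|=d(U_\eps,N)$, $w_\eps(X,t)\in T^\perp_{v_\eps(X,t)}N$, and $D\Pi_N(U_\eps)$, $D^2\Pi_N(U_\eps)$ and the second fundamental form $A_N(v_\eps)$ of $N$ at $v_\eps$ are smooth functions of $U_\eps$, hence Lipschitz in $(X,t)$ by \eqref{unif_bdd}, so that also $|\nabla v_\eps|+|\nabla w_\eps|+|\partial_t v_\eps|+|\partial_t w_\eps|\le C$ in $P_1^+$. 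Moreover on $\Gamma_1$ we have $\chi'(d^2(U_\eps,N))=1$ and $D_p d^2(\cdot,N)=2(p-\Pi_N(p))$, so the boundary condition in \eqref{half-flow1} reads $\lim_{y\to0^+}\partial_y U_\eps=\tfrac{2c_{1/2}}{\eps^2}w_\eps$ on $\Gamma_1$.

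First I would treat the component $v_\eps$ along $N$. Differentiating $v_\eps=\Pi_N(U_\eps)$ and using $(\partial_t-\Delta)U_\eps=0$ yields $(\partial_t-\Delta)v_\eps=-D^2\Pi_N(U_\eps)(\nabla U_\eps,\nabla U_\eps)=:-F_\eps$ in $P_1^+$, with $|F_\eps|\le C$; and since the boundary flux $\tfrac{2c_{1/2}}{\eps^2}w_\eps$ points along the (flat) normal fibre of the tubular neighborhood of $N$ through $U_\eps$, along which $\Pi_N$ is constant, $D\Pi_N(U_\eps)$ annihilates this direction, so $\partial_\nu v_\eps=0$ on $\Gamma_1$. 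By $W^{2,p}$ and Schauder estimates for the parabolic Neumann problem, $\|v_\eps\|_{C^{1+\alpha}(P^+_{7/8})}\le C(\alpha)$ for every $\alpha\in(0,1)$, uniformly in $\eps$. Subtracting, $w_\eps$ solves $(\partial_t-\Delta)w_\eps=F_\eps$ in $P_1^+$ together with the linear, coercive Robin condition $\tfrac{\eps^2}{2c_{1/2}}\partial_\nu w_\eps+w_\eps=0$ on $\Gamma_1$; note that, unlike in \eqref{grad-}, the Robin coefficient is now constant, so the error term $\mathbf g_Y^\eps$ disappears. Splitting $\nabla U_\eps=(\nabla U_\eps)^{\top}+(\nabla U_\eps)^{\perp}$ into parts tangent and normal to $N$ at $v_\eps$, the identity $D\Pi_N(U_\eps)(\nabla U_\eps)^{\perp}=0$ gives $(\nabla U_\eps)^{\top}=(I-S_{w_\eps})\nabla v_\eps$, $S_{w_\eps}$ being the shape operator of $N$ at $v_\eps$ in the direction $w_\eps$, whence $(\nabla U_\eps)^{\top}\in C^\alpha$ by the bound on $v_\eps$. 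Feeding this back into $F_\eps$ and using that $D^2\Pi_N(p)$ vanishes on pairs of vectors normal to $N$ at $\Pi_N(p)$, together with the fact that the tangential‑to‑$N$ part of $\nabla w_\eps$ equals $-\langle w_\eps,A_N(v_\eps)(\nabla v_\eps,\cdot)\rangle$ (obtained by differentiating $\langle w_\eps,\partial_j v_\eps\rangle\equiv0$), one rewrites $F_\eps$ as a finite sum of products of functions bounded in $C^\alpha(P^+_{7/8})$, so that $\|F_\eps\|_{C^\alpha(P^+_{7/8})}\le C(\alpha)$ uniformly in $\eps$ — this is precisely the role played by the regularity of $\omega_\eps$ in the sphere case.

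With the $\eps$‑uniform bounds $\|F_\eps\|_{C^\alpha}\le C$ and $\tfrac1{\eps^2}|w_\eps|\le C$ on $\Gamma_1$ (the latter because $|\partial_y w_\eps|\le|\nabla U_\eps|\le 4$), the argument of Proposition~\ref{prop-regu} applies verbatim: for tangential and temporal increments one majorizes $|w_\eps(\cdot+Y,\cdot)-w_\eps(\cdot,\cdot)|$, respectively its temporal analogue, by the solution $\widehat{\h}_Y^\eps$ of the scalar auxiliary problem \eqref{auxi}, invokes the explicit oblique Green‑function estimate \eqref{C1-est} and Theorem~\ref{schauder} of Appendix~A — both geometry‑free and hence unchanged — to gain the extra factor $\eps^2$ on the boundary, and deduces $\tfrac1{\eps^2}w_\eps\in C^\alpha(\Gamma_{7/24})$ uniformly in $\eps$. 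Consequently $\partial_y U_\eps=\tfrac{2c_{1/2}}{\eps^2}w_\eps$ is a uniformly $C^\alpha$ Neumann datum for the heat equation $(\partial_t-\Delta)U_\eps=0$ on $P^+_{7/24}$, and boundary Schauder theory yields $\|U_\eps\|_{C^{1+\alpha}(P^+_{1/4})}\le C(\alpha)$, which is the assertion of Lemma~\ref{c1a}.

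The hard part, as in the sphere case, is the last step: obtaining the gain of the extra power $\eps^2$ on $\Gamma$ \emph{uniformly} in $\eps$, which rests on the explicit Green‑function construction of Appendix~A and on the $\eps$‑uniform $C^\alpha$ bound for the inhomogeneity $F_\eps$. Verifying that bound — i.e. checking that the only quantity not controlled a priori, the normal part of $\nabla w_\eps$, either drops out of $F_\eps$ or is absorbed by the second fundamental form of $N$ and the already‑regular $v_\eps$ — is the computation genuinely specific to a general target, and it replaces the orthogonal splitting $|\nabla U_\eps|^2=|\nabla\rho_\eps|^2+\rho_\eps^2|\nabla\omega_\eps|^2$ available for $\S^{\ell-1}$.
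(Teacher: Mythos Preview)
Your overall architecture matches the paper's: decompose $U_\eps$ into a tangential piece $v_\eps=\Pi_N(U_\eps)$ and a normal piece, obtain $C^{1+\alpha}$ for the tangential piece from the homogeneous Neumann problem with bounded right-hand side, then run the Proposition~\ref{prop-regu} machinery on the normal piece. The boundary computations ($\partial_\nu v_\eps=0$, constant-coefficient Robin condition for the normal part) are correct.

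There is, however, a genuine gap in your step showing $F_\eps\in C^\alpha$. You correctly note that $D^2\Pi_N(p)(\xi_1,\xi_2)=0$ for $\xi_1,\xi_2\in T^\perp_{\Pi_N(p)}N$, and that $(\nabla U_\eps)^\top=(I-S_{w_\eps})\nabla v_\eps\in C^\alpha$. But the \emph{cross} term $2D^2\Pi_N(U_\eps)\big((\nabla U_\eps)^\top,(\nabla w_\eps)^\perp\big)$ does not vanish: a short computation (e.g.\ for $N$ the unit circle) gives $D^2\Pi_N(q+w)(v,\xi)=(I-S_w)^{-1}S_\xi(I-S_w)^{-1}v$ for $v$ tangent and $\xi$ normal, which depends linearly on $\xi=(\nabla w_\eps)^\perp$. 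Since $(\nabla w_\eps)^\perp=P^\perp_{v_\eps}\nabla U_\eps$ is only bounded at this stage --- it is the genuinely free normal derivative of $U_\eps$ and carries no constraint expressible through $\nabla v_\eps$ and $A_N$ --- this term is a product of a $C^\alpha$ factor with a merely bounded one, hence not $C^\alpha$. Concretely, in codimension one with $w_\eps=\rho_\eps\nu(v_\eps)$ one finds $(\partial_t-\Delta)w_\eps$ contains $-2\nabla\rho_\eps\cdot D\nu(v_\eps)\nabla v_\eps$, which is exactly of this bad type. Your claim that the normal part of $\nabla w_\eps$ ``drops out or is absorbed'' is therefore not justified, and without $F_\eps\in C^\alpha$ the finite-difference estimate $|\mathbf f_Y^\eps|\le C|Y|^\alpha$ underlying the barrier argument of Proposition~\ref{prop-regu} fails.

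The paper avoids this by working with the \emph{scalar} $\rho_\eps=d(U_\eps,N)$ rather than the vector $w_\eps$. When one dots the identity $(\partial_t-\Delta)U_\eps=0$ with $\nu_N=w_\eps/|w_\eps|$ to obtain the $\rho_\eps$-equation \eqref{polar2}, the dangerous $\nabla\rho_\eps$ contribution cancels because $\langle\nabla(\nu_N(V_\eps)),\nu_N(V_\eps)\rangle=0$; the resulting right-hand side depends only on $\nabla V_\eps$ and is $C^\alpha$ once $V_\eps\in C^{1+\alpha}$. That projection is the structural point you are missing --- it is precisely the analogue of passing from $U_\eps=\rho_\eps\omega_\eps$ to the scalar $\rho_\eps$ in the sphere case, not merely a notational choice.
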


\begin{proof} The proof is similar to that of Proposition 5.1 (see also \cite[pages 342-346]{chen-lin}). Since $U_\eps(P_1^+)\subset N_{\delta_N}$,
we can decompose 
$$
U_\eps=V_\eps+\nu_N(V_\eps) \rho_\eps\  \ {\rm{in}}\ \ P_1^+.
$$
Here $V_\eps=\Pi_N(U_\eps), \ \rho_\eps=d(U_\eps, N)=|U_\eps-V_\eps|$, and $\nu_N(V_\eps)\in (T_{V_\eps}N)^\perp$ is a smooth unit vector field
in the normal space $(T_{V_\eps}N)^\perp$. By direct calculations, we obtain that
\begin{align*}
&0=\partial_t U_\eps-\Delta U_\eps\\
&\ =(\mathbb{I}_{l}+\rho_\eps \nabla_{V_\eps}\nu_N(V_\eps)) (\partial_t V_\eps-\Delta V_\eps)
 +\nu_N(V_\eps)(\partial_t\rho_\eps-\Delta\rho_\eps)\\
&\ \ \ \ -2\nabla(\nu_N(V_\eps))\nabla\rho_\eps-\rho_\eps\nabla^2_{V_\eps}\nu_N(V_\eps)(\nabla V_\eps,\nabla V_\eps)
\end{align*}
hold in $P_1^+$.  If we multiply the equation above by $\nu_N(V_\eps)$ and observe that
$$
\langle (\mathbb{I}_{l}+\rho_\eps \nabla_{V_\eps}\nu_N(V_\eps)) (\partial_t V_\eps-\Delta V_\eps), \nu_N(V_\eps)\rangle
=\langle\Delta V_\eps, \nu_N(V_\eps)\rangle=- \nabla_{V_\eps}\nu_N(V_\eps)(\nabla V_\eps, \nabla V_\eps),
$$ 
we can show that $V_\eps$ and $\rho_\eps$ solve
\begin{equation}\label{polar1}
\begin{cases}\displaystyle
(\mathbb{I}_{l}+\rho_\eps \nabla_{V_\eps}\nu_N(V_\eps)) \big(\partial_t V_\eps-\Delta V_\eps)
=\rho_\eps\Pi_N(V_\eps)\big(\nabla^2_{V_\eps}\nu_N(V_\eps)(\nabla V_\eps,\nabla V_\eps)\big)\\ 
\quad\quad\qquad\ \ \ \  \ \ \ \ \ -2\nabla(\nu_N(V_\eps))\nabla\rho_\eps +\rho_\eps \nabla_{V_\eps}\nu_N(V_\eps)(\nabla V_\eps, \nabla V_\eps)\nu_N(V_\eps) \ & {\rm{in}}\ P_1^+,\\
\displaystyle\frac{\partial V_\eps}{\partial y}=0 \ & {\rm{in}}
\ \Gamma_1.
\end{cases}
\end{equation}
and
\begin{equation}\label{polar2}
\begin{cases}
\displaystyle
\partial_t\rho_\eps-\Delta\rho_\eps=\rho_\eps \langle \nabla^2_{V_\eps}\nu_N(V_\eps)(\nabla V_\eps,\nabla V_\eps), \nu_N(V_\eps)\rangle\\
\qquad\qquad\qquad\quad-\rho_\eps \nabla_{V_\eps}\nu_N(V_\eps)(\nabla V_\eps, \nabla V_\eps)
\ & {\rm{in}}\ P_1^+,\\
\displaystyle\frac{\partial\rho_\eps}{\partial y}=\frac{2c_\frac12}{\eps^2}\rho_\eps \ & {\rm{in}}
\ \Gamma_1.
\end{cases}
\end{equation}
Here we have used the fact that $\nabla_p\rho_\eps(p)=\nu_N(\Pi_N(p))$ for $p\in N_{\delta_N}$,
so that the boundary condition for $U_\eps$ implies that on $\Gamma_1$, 
\begin{align*}
&0=\frac{\partial U_\eps}{\partial y}-\frac{c_\frac12}{\eps^2} \chi'(d^2(U_\eps, N))D_{U_\eps}d^2(U_\eps, N)\\
&\ \ =\frac{\partial V_\eps}{\partial y} +\frac{\partial \nu_N(V_\eps)}{\partial y} \rho_\eps
+\big(\frac{\partial\rho_\eps}{\partial y}-\frac{2c_\frac12}{\eps^2} \rho_\eps\big) \nu_N(V_\eps).
\end{align*}
If we multiply this equation by $\nu_N(V_\eps)$ and observe that
$\langle\frac{\partial V_\eps}{\partial y},   \nu_N(V_\eps)\rangle=\langle\frac{\partial \nu_N(V_\eps)}{\partial y}, \nu_N(V_\eps)\rangle=0$,
we would obtain the above boundary condition for $\rho_\eps$. 
On the other hand, the boundary condition for $V_\eps$ follows from the following identity 
$$
0=\frac{\partial V_\eps}{\partial y} +\frac{\partial \nu_N(V_\eps)}{\partial y} \rho_\eps
=\big(\mathbb I_l+\rho_\eps\nabla_{V_\eps}\nu_N(V_\eps)\big)\frac{\partial V_\eps}{\partial y},
$$
and the invertibility of the map $\big(\mathbb I_l+\rho_\eps\nabla_{V_\eps}\nu_N(V_\eps)\big):\R^l\to\R^l$.

Note that (\ref{unif_bdd}) implies that
$$
(|\partial_t V_\eps|+|\nabla V_\eps|)+(|\partial_t \rho_\eps|+|\nabla \rho_\eps|)\le 8 \  {\rm{in}}\ \ P_1^+.
$$
This implies
$$\Big\|(\mathbb{I}_{l}+\rho_\eps \nabla_{V_\eps}\nu_N(V_\eps))-\mathbb{I}_l\Big\|_{L^\infty(P_1^+)} \le C\delta_N,$$
and 
$$
\Big\|\rho_\eps \langle \nabla^2_{V_\eps}\nu_N(V_\eps)(\nabla V_\eps,\nabla V_\eps), \nu_N(V_\eps)\rangle
-\rho_\eps \nabla_{V_\eps}\nu_N(V_\eps)(\nabla V_\eps, \nabla V_\eps)\Big\|_{L^\infty(P_1^+)}\le C.
$$
Hence by the $W^{2,1}_p$-estimate for linear parabolic equations, we obtain that 
$$\Big\|V_\eps\Big\|_{C^{1+\alpha}(P_\frac78^+)}\le C(\alpha), \ \forall \ \alpha\in (0,1),$$
uniformly with respect to $\eps$.

The boundary $C^{1+\alpha}$-estimate of $\rho_\eps$ can be done exactly as in Proposition 5.1. This completes the proof of
Lemma \ref{c1a}.
\end{proof}

Finally with Lemma \ref{c1a} at hand, we can show that $U_\eps$ also satisfies the gradient estimate as in Lemma 4.3. More precisely, we have
that

\begin{lemma}\label{eps_gradient_est1} There exists $\eps_0>0$,  depending only on $m$, such that if $U_\eps$ solves
\eqref{half-flow1} and satisfies, for $Z_0=(X_0, t_0)\in \partial\R^{m+1}_+
\times (0,\infty)$ and some $0<R<\frac{\sqrt{t_0}}2$, 
\begin{equation}\label{small_cond2}
\mathcal{E}(U_\eps, Z_0, R)<\eps_0^2,
\end{equation}
then
\begin{equation}\label{small_est3}
\displaystyle\sup_{P_{\delta_0 R}^+(Z_0)} R^2 |\nabla U_\eps|^2 \le C\delta_0^{-2},\ \ 
\displaystyle\sup_{P_{\delta_0 R}^+(Z_0)} R^4|\partial_t U_\eps|^2 \le C\delta_0^{-4},
\end{equation}
where $0<\delta_0<1$ and $C>0$ are independent of $\eps$.
\end{lemma}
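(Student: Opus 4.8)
The plan is to transcribe the proof of Lemma~\ref{eps_gradient_est} almost verbatim, with the clearing-out Lemma~\ref{clean_out1} and the boundary $C^{1+\alpha}$-estimate Lemma~\ref{c1a} playing the roles of Lemma~\ref{clearing-out} and Proposition~\ref{prop-regu}, and with $(1-|u_\eps|^2)^2$ replaced everywhere by $\chi(d^2(u_\eps,N))$. By a parabolic rescaling I would first reduce to $R=1$ and $t_0>4$. Then I would record, exactly as for \eqref{small_energy1}, the local energy bound: for $Z_1\in P_\sigma^+(Z_0)$ and $0<r,\sigma<\delta$ with $2r+\sigma<\delta$,
\begin{align*}
&\Big(\frac r2\Big)^{2-m}\int_{P_{r/2}^+(Z_1)}|\partial_t U_\eps|^2
+r^{-m}\Big(\int_{P_r^+(Z_1)}\frac12|\nabla U_\eps|^2+\int_{\partial^+P_r^+(Z_1)}\frac{c_{1/2}}{\eps^2}\chi(d^2(u_\eps,N))\Big)\\
&\qquad\le C(\eps_0^2+\eps_1 E_0),
\end{align*}
which follows by combining the monotonicity Lemma for $\E$, the continuity estimate analogous to Lemma~\ref{lem-conti} (its proof uses only Gaussian bounds and the energy inequality \eqref{eq:energy_est3}, hence carries over unchanged), and the local energy inequality \eqref{local_energy_ineq7}.

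Next I would run the Struwe point-selection blow-up. With $\delta>0$ the radius from Lemma~\ref{clean_out1}, I choose $\sigma_\eps\in(0,\delta)$ maximizing $(\delta-\sigma)^2\max_{P_\sigma^+(Z_0)}(|\nabla U_\eps|^2+|\partial_t U_\eps|)$, pick a point $Z_1^\eps=(X_1^\eps,t_1^\eps)$ realizing the inner maximum, set $e_\eps^2:=(|\nabla U_\eps|^2+|\partial_t U_\eps|)(Z_1^\eps)$ and $\rho_\eps:=\frac12(\delta-\sigma_\eps)$, and define $\tilde U_\eps(X,t):=U_\eps(\tilde X_1^\eps+X/e_\eps,\,t_1^\eps+t/e_\eps^2)$ with $\tilde X_1^\eps$ the boundary projection of $X_1^\eps$. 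Then $\tilde U_\eps$ solves \eqref{half-flow1} with $\eps$ replaced by $\tilde\eps$, where $\tilde\eps^{-2}:=c_{1/2}/(\eps^2 e_\eps)$; it obeys $|\nabla\tilde U_\eps|^2+|\partial_t\tilde U_\eps|\le 4$ on $P_{r_\eps}^+(Y_1^\eps,0)$ with $r_\eps:=\rho_\eps e_\eps$, and equals $1$ at $(Y_1^\eps,0)$. If $r_\eps\le 2$ the conclusion \eqref{small_est3} follows directly, as in the sphere case. If $r_\eps>2$ and $y_1^\eps e_\eps\ge\frac18$, interior parabolic estimates for the heat equation bound $1=(|\nabla\tilde U_\eps|^2+|\partial_t\tilde U_\eps|)(Y_1^\eps,0)$ by a constant times the rescaled local energy, which by the displayed bound is $\le C(\eps_0^2+\eps_1E_0)+C\sqrt{\eps_0^2+\eps_1E_0}$, impossible for $\eps_0$ small.

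The remaining case is $r_\eps>2$ and $y_1^\eps e_\eps\le\frac18$. Here the monotonicity Lemma together with the continuity estimate give $\E(\tilde U_\eps,(0,0),1)\le C(\eps_0+\eps_1)$, so Lemma~\ref{clean_out1} applies and yields $d(\tilde U_\eps,N)\le\delta_N$, hence $\chi(d^2(\tilde U_\eps,N))=d^2(\tilde U_\eps,N)$, on a fixed parabolic neighborhood of $(Y_1^\eps,0)$; on that neighborhood $\tilde U_\eps$ satisfies the hypotheses of Lemma~\ref{c1a}, so $\|\tilde U_\eps\|_{C^{1+\alpha}}\le C(\alpha)$ uniformly in $\eps$. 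From this, $|\nabla\tilde U_\eps|^2+|\partial_t\tilde U_\eps|\ge\frac12$ on a parabolic ball $P_{r_0}^+(Y_1^\eps,0)$ of fixed radius $r_0$, so $r_0^2\le C r_0^{-m}\int_{P_{r_0}^+(Y_1^\eps,0)}(|\nabla\tilde U_\eps|^2+|\partial_t\tilde U_\eps|)$, which upon undoing the scaling is again $\le C(\eps_0^2+\eps_1E_0)+C\sqrt{\eps_0^2+\eps_1E_0}$ by the local energy bound, a contradiction for $\eps_0$ small. Hence $r_\eps\le 2$, and \eqref{small_est3} holds. The step I expect to require the most care is verifying that the rescaled map $\tilde U_\eps$ genuinely solves the same system \eqref{half-flow1} with the new parameter $\tilde\eps$, so that Lemma~\ref{clean_out1} and Lemma~\ref{c1a} are applicable to it: this is where one uses that the potential $\frac1{\eps^2}\chi(d^2(\cdot,N))$ degenerates to the ``clean'' quadratic potential $\frac1{\eps^2}d^2(\cdot,N)$ precisely on the region where the energy is small, which is the analog of the sphere potential reducing to its quadratic part near $\mathbb S^{\ell-1}$. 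Everything else is a routine transcription of the sphere argument.
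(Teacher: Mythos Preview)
Your proposal is correct and matches the paper's intended approach: the paper gives no separate proof for Lemma~\ref{eps_gradient_est1}, merely stating that with Lemma~\ref{c1a} in hand the argument of Lemma~\ref{eps_gradient_est} goes through verbatim with the potential $\chi(d^2(\cdot,N))$ in place of $(1-|\cdot|^2)^2$, and that is precisely what you have written out. One minor simplification: since you already take $\delta$ to be the radius $\beta$ from Lemma~\ref{clean_out1} applied to $U_\eps$ at $Z_0$, you have $d(U_\eps,N)\le\delta_N$ on all of $P_\delta^+(Z_0)$, and hence $d(\tilde U_\eps,N)\le\delta_N$ automatically on $P_{r_\eps}^+(Y_1^\eps,0)$ (the rescaled map only samples $U_\eps$ inside $P_\delta^+(Z_0)$); there is no need to re-apply Lemma~\ref{clean_out1} to $\tilde U_\eps$ in the final case.
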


\section*{Acknowledgements}
{ \small  AH was supported by the SNSF grants no. P400P2-183866 and P4P4P2-194460.    AS is member of the GNAMPA (Gruppo Nazionale per l'Analisi Matematica, la Probabilit\`a e le loro Applicazioni)
group of INdAM.  
AS acknowledges the partial support of the MIUR-PRIN Grant 2017 ``Variational methods for stationary and evolution problems with singularities and interfaces''.
CW is partially supported by NSF grants 1764417 and 2101224.
}

\bibliographystyle{acm}
\bibliography{biblio}

\begin{center}
Ali Hyder, TIFR Centre for Applicable Mathematics\\ Sharadanagar, Bangalore 560064, India\\ hyder@tifrbng.res.in
\end{center}

\bigskip
\begin{center}
Antonio Segatti,
Dipartimento di Matematica ``F. Casorati'', Universit\`a di Pavia\\
Via Ferrata 5, 27100 Pavia, Italy\\
antonio.segatti@unipv.it
\end{center}

\bigskip
\begin{center}
Yannick Sire, Department of Mathematics, Johns Hopkins University \\
3400 N. Charles Street, Baltimore, MD 21218, USA \\
ysire1@jhu.edu
\end{center}

\bigskip
\begin{center}
Changyou Wang, Department of Mathematics, Purdue University\\
150 N. University Street, West Lafayette, IN 47907, USA\\
wang2482@purdue.edu
\end{center}

\end{document}